\documentclass[12pt]{amsart}
\usepackage{amssymb}
\setlength{\textheight}{20cm} \textwidth16cm \hoffset=-2truecm
\usepackage{mathtools}

\numberwithin{equation}{section}

\def\1#1{\overline{#1}}
\def\2#1{\widetilde{#1}}
\def\3#1{\widehat{#1}}
\def\4#1{\mathbb{#1}}
\def\5#1{\frak{#1}}
\def\6#1{{\mathcal{#1}}}

\newcommand{\UH}{\mathbb{H}}

\newcommand{\Hol}{{\sf Hol}}

\def\Re{{\sf Re}\,}
\def\Im{{\sf Im}\,}

\newcommand{\Real}{\mathbb{R}}
\newcommand{\Natural}{\mathbb{N}}
\newcommand{\Complex}{\mathbb{C}}
\newcommand{\anglim}{\angle\lim}

\newcommand{\clskip}{0.1em}

\newcommand{\clskipii}{0.033em}
\newcommand{\clskipa}{0.067em}
\newcommand{\closure}[1]{{\overline{#1}}}

\newcommand{\UDc}{\hskip-\clskipii{\overline{\hskip\clskipii\mathbb D\hskip-\clskip}\hskip\clskip}}
\newcommand{\ComplexE}{{\hskip\clskipa\overline{\vphantom{J^m}%
\hskip-\clskipa\mathbb C\hskip-\clskipa}\hskip\clskipa}}

\newcommand{\UD}{\mathbb{D}}

\newcommand{\Step}[2]{\begin{itemize}\item[{\bf Step~#1.}]{\it #2}\end{itemize}}
\newcommand{\step}[2]{\begin{itemize}\item[{\it Step~#1.}]{\it #2}\end{itemize}}
\newcommand{\case}[2]{\begin{itemize}\item[{\it\underline{Case~#1}:}]{\it #2}\end{itemize}}

\newcommand{\mcite}[1]{\csname b@#1\endcsname}
\newcommand{\UC}{\mathbb{T}}

\newcommand{\dAlg}{{\mathcal A}(\UD)}
\newcommand{\diam}{\mathsf{diam}}

\theoremstyle{theorem}
\newtheorem {result} {Theorem}
\setcounter {result} {64}








\def\id{{\sf id}}

\def\Re{{\sf Re}\,}
\def\Im{{\sf Im}\,}

\emergencystretch15pt \frenchspacing

\newtheorem{theorem}{Theorem}[section]
\newtheorem{lemma}[theorem]{Lemma}
\newtheorem{proposition}[theorem]{Proposition}
\newtheorem{corollary}[theorem]{Corollary}

\theoremstyle{definition}
\newtheorem{definition}[theorem]{Definition}

\theoremstyle{remark}
\newtheorem{remark}[theorem]{Remark}
\newtheorem{example}{Exapmle}
\numberwithin{equation}{section}

\newenvironment{mylist}{\begin{list}{}%
{\labelwidth=2em\leftmargin=\labelwidth\itemsep=.4ex plus.1ex
minus.1ex\topsep=.7ex plus.3ex
minus.2ex}%
\let\itm=\item\def\item[##1]{\itm[{\rm ##1}]}}{\end{list}}

\long\def\REM#1{\relax}

\def\bfit{\fontseries{bx}\fontshape{it}\selectfont}

\title[Angular and unrestricted limits of one-parameter semigroups]{Angular and unrestricted
limits of one-parameter semigroups in the unit disk}
\author[P. Gumenyuk]{Pavel Gumenyuk${}^\dag$}
\address{Dipartimento di Matematica, Universit\`a di Roma ``Tor Vergata", Via della Ricerca
Scientifica 1, 00133, Roma, Italia.} \email{gumenyuk@mat.uniroma2.it}

\thanks{${}^\dag$Partially supported by the FIRB grant Futuro in Ricerca ``Geometria Differenziale Complessa e Dinamica Olomorfa'' n. RBFR08B2HY}

\begin{document}

\maketitle \begin{abstract} We study local boundary behaviour of one-parameter
semigroups of holomorphic functions in the unit disk. Earlier under some
addition condition (the position of the Denjoy\,--\,Wolff point) it was shown
in~\cite{CMP2004} that  elements of one-parameter semigroups have
\textit{angular limits everywhere on the unit circle} and \textit{unrestricted
limits at all boundary fixed points}. We prove stronger versions of these
statements with no assumption on the position of the Denjoy\,--\,Wolff point.
In contrast to many other problems, in the question of existence for
unrestricted limits it appears to be more complicated to deal with the boundary
Denjoy\,--\,Wolff point (the case not covered in~\cite{CMP2004}) than with all
the other boundary fixed points of the semigroup.
\end{abstract}

\tableofcontents

\section{Introduction} One-parameter semigroups in the unit
disk~$\UD:\{z\in\Complex:\,|z|<1\}$ are classical objects of study in Complex
Analysis and can be defined as \textit{continuous homomorphisms} from the
additive semigroup~$\big([0,+\infty),+\big)$ of non-negative reals to the
topological semigroup $\Hol(\UD,\UD)$ consisting of all holomorphic
self-maps~$\phi:\UD\to\UD$ and endowed with operation of composition
$(\phi,\psi)\mapsto \psi\circ\phi$ and the topology induced by the locally
uniform convergence in~$\UD$. In other words, a \textit{one-parameter semigroup
in~$\UD$} is a family $(\phi_t)\in\Hol(\UD,\UD)$ satisfying the following
conditions:
\begin{enumerate}
\item[(i)] $\phi_0=\id_\UD$;

\item[(ii)] $\phi_{t+s}=\phi_t\circ\phi_s=\phi_s\circ\phi_t$ for any
$t,s\ge0$;

\item[(iii)] $\phi_t(z)\to z$ as $t\to+0$ for any $z\in\UD$.
\end{enumerate}
Due to the fact that~$\Hol(\UD,\UD)$ is a normal family in~$\UD$,
condition~(iii) expresses the continuity of the map $t\mapsto\phi_t$.

In a similar way one can define one-parameter semigroups in other
domains\footnote{It is worth to mention that one-parameter semigroups in a
domain~$D\subset\ComplexE$ constitute a very narrow class of objects unless $D$
is conformally equivalent to~$\UD$. Passing from~$\UD$ to another domain can
make sense when the geometry of the new domain suites the problem better,
\textsl{e.g.}, in the case of a boundary attracting fixed point (the boundary
Denjoy\,--\,Wolff point).}, \textsl{e.g.}, in the upper-half plane
$\UH_i:=\{z:\,\Im z>0\}$. In what follows we will omit the words ``in~$\UD$"
and specify the domain only in the rare cases when it is different from~$\UD$.

Interest to one-parameter semigroups comes from different areas. In the
Iteration Theory in~$\UD$ they appear as \textit{fractional iterates}, see,
\textsl{e.g.}, \cite{frac1958, frac1981, fracGor1991, fracGor2002}. In Operator
Theory, one-parameter semigroups in~$\UD$ have been extensively investigated in
connection with the study of one-parameter semigroups of~\textit{composition
operators}, see, \textsl{e.g.}, \cite{BP, SiskakisReview}. The
\textit{embedding problem} for time-homogenous stochastic branching processes
is also very much related to one-parameter semigroups, see, \textsl{e.g.},
\cite{emb1968,emb1993Dokl,emb1993MatSb}. Finally, one can consider this notion
as a special (autonomous) case of evolution families in~$\UD$ playing important
role in much celebrated Loewner Theory \cite{SemigroupsGor, BCM1,MFMS_ems}. It
is also worth to be mentioned that one-parameter semigroups lying in a given
subsemigroup $\mathsf S\subset\Hol(\UD,\UD)$ give useful information about the
infinitesimal structure of~$\mathsf S$~\cite{SemigroupsGor,fracGor1991}.

Not every element of~$\Hol(\UD,\UD)$ can be embedded into a one-parameter
semigroup. Elements of one-parameter semigroups enjoy some very specific nice
properties. For example, these functions are \textit{univalent} (see,
\textsl{e.g.}, \cite[Proposition~(1.4.6)]{Abate}). But especially brightly this
shows up in their boundary behaviour. In this paper we study mainly
\textit{local boundary behaviour} of one-parameter semigroups.

\subsection{Preliminaries}
Here we collect some fundamental results on one-parameter semigroups we use in
this paper.

First of all, in spite of the fact that in the definition one requires only
continuity of a one-parameter semigroup~$(\phi_t)$ w.r.t. the parameter~$t$,
the algebraic semigroup structure enhances regularity in~$t$. In fact, the map
$t\mapsto \phi_t(z)$ is smooth. Moreover, any one-parameter semigroup is a
semiflow of some holomorphic vector field \cite[Theorem(1.1)]{BP}, see also
\cite[\S3.2]{ShoikhetBook} or \cite[Theorem~(1.4.11)]{Abate}. More rigorously
these statements can be formulated in the following form.
\begin{result}\label{RES_inf-gen}
For any one-parameter semigroup~$(\phi_t)$ the limit
\begin{equation}
G(z):=\lim_{t\to+0}\frac{\phi_t(z)-z}{t},\quad z\in\UD,
\end{equation}
exists and $G$ is a holomorphic function in~$\UD$.

Moreover, for each $z\in\UD$, the function $[0,\infty)\ni t\mapsto
w(t):=\phi_t(z)\in\UD$ is the unique solution to the initial value problem
\begin{equation}\label{EQ_ODEaut}
\frac{dw(t)}{dt}=G\big(w(t)\big),\quad t\ge0,\quad w(0)=z.
\end{equation}
\end{result}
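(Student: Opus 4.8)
The plan is to combine the normal-family structure of~$\Hol(\UD,\UD)$ with an averaging device that linearizes the semigroup locally. First I would upgrade the pointwise continuity in~(iii) to joint continuity: since $\Hol(\UD,\UD)$ is normal by Montel's theorem, the maps $\{\phi_t\}$ are locally uniformly bounded, so pointwise continuity of $t\mapsto\phi_t(z)$ forces $\phi_t\to\phi_s$ locally uniformly as $t\to s$, whence $(t,z)\mapsto\phi_t(z)$ is continuous. By the Cauchy integral formula the derivatives converge as well, so $t\mapsto\phi_t'(z)$ is continuous and $\phi_0'=1$.

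The key step is the averaging trick. For $c>0$ define the holomorphic function $F_c(z):=\int_0^c\phi_s(z)\,ds$. Differentiating under the integral sign (justified by the local uniform convergence above) gives $F_c'(z)=\int_0^c\phi_s'(z)\,ds$, whence $F_c'(z_0)/c\to\phi_0'(z_0)=1$ as $c\to+0$. Thus for every fixed $z_0\in\UD$ one can choose $c>0$ so small that $F_c'\neq0$ on a neighbourhood~$V$ of~$z_0$. On the other hand, the semigroup law yields
\begin{equation*}
F_c(\phi_t(z))=\int_0^c\phi_{s+t}(z)\,ds=\int_t^{t+c}\phi_u(z)\,du,
\end{equation*}
so the right-hand side is continuously differentiable in~$t$ with derivative $\phi_{t+c}(z)-\phi_t(z)$.

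From here existence and holomorphy of~$G$ follow. Since $F_c$ is biholomorphic onto its image on~$V$, the orbit $w(t)=\phi_t(z)$ equals $F_c^{-1}\big(\int_t^{t+c}\phi_u(z)\,du\big)$ for small $t\ge0$ and $z\in V$; this exhibits $w$ as a $C^1$ function of~$t$ whose $t$-derivative is jointly continuous in $(t,z)$. Evaluating at $t=0$ gives
\begin{equation*}
\lim_{t\to+0}\frac{\phi_t(z)-z}{t}=\frac{\phi_c(z)-z}{F_c'(z)}=:G(z),\qquad z\in V,
\end{equation*}
which is holomorphic on~$V$ as a quotient of holomorphic functions with non-vanishing denominator; letting $z_0$ range over~$\UD$ defines a single holomorphic~$G$. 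The joint continuity of the $t$-derivative of~$w$ upgrades this to locally uniform convergence $(\phi_t-\id)/t\to G$ as $t\to+0$.

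It remains to verify the ODE and uniqueness. The semigroup law gives, for the right derivative, $\big(\phi_{t+h}(z)-\phi_t(z)\big)/h=\big(\phi_h(\phi_t(z))-\phi_t(z)\big)/h\to G(\phi_t(z))$; for the left derivative one writes $\big(\phi_t(z)-\phi_{t-h}(z)\big)/h=\big(\phi_h(\phi_{t-h}(z))-\phi_{t-h}(z)\big)/h$ and uses the locally uniform convergence together with $\phi_{t-h}(z)\to\phi_t(z)$. Both yield $w'(t)=G(w(t))$, with $w(0)=z$. Uniqueness is then immediate from Picard--Lindel\"of, since the holomorphic~$G$ is locally Lipschitz. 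The step I expect to be most delicate is the treatment of points~$z_0$ at which the naive averaging function degenerates; the freedom to choose the averaging length~$c$ pointwise is exactly what removes this obstruction, sparing a separate removable-singularity argument for the zeros of~$F_c'$.
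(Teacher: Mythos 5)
This statement (Theorem~1 of the paper) is not proved in the paper at all: it is quoted as a classical result, with pointers to Berkson--Porta \cite{BP}, Shoikhet's book, and Abate's Theorem~(1.4.11). So there is no internal proof to compare against; what can be said is that your argument is correct and is, in essence, the standard proof found in exactly those references. The averaging device $F_c(z)=\int_0^c\phi_s(z)\,ds$, the observation that $F_c'(z_0)/c\to1$ so that $F_c$ is locally biholomorphic for a suitably small $c$ chosen \emph{depending on the base point}, the identity $F_c(\phi_t(z))=\int_t^{t+c}\phi_u(z)\,du$ which transfers $C^1$-regularity in $t$ from the integral to the orbit, and the resulting formula $G=(\phi_c-\id)/F_c'$ on a neighbourhood of $z_0$ --- this is precisely the Berkson--Porta linearization trick. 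Your derivation of the autonomous ODE (right derivative directly from the semigroup law; left derivative via locally uniform convergence of the difference quotients together with continuity of the orbit) and the appeal to Picard--Lindel\"of for uniqueness, $G$ being locally Lipschitz, are likewise the standard concluding steps.

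One point you pass over too quickly: condition~(iii) in the definition gives continuity of $t\mapsto\phi_t(z)$ only at $t=0$, whereas your Montel/Vitali upgrade takes pointwise continuity at every $t$ as input. Right-continuity is immediate from $\phi_{t+h}=\phi_h\circ\phi_t$, but left-continuity needs a small extra argument; for instance, writing $\phi_t=\phi_{t-h}\circ\phi_h$ and using that $\phi_{t-h}$ contracts the hyperbolic distance $\rho$ gives $\rho\big(\phi_{t-h}(z),\phi_t(z)\big)=\rho\big(\phi_{t-h}(z),\phi_{t-h}(\phi_h(z))\big)\le\rho\big(z,\phi_h(z)\big)\to0$ as $h\to+0$. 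Since the paper's definition of a one-parameter semigroup as a \emph{continuous} homomorphism effectively builds this in, I would count it as a gloss rather than a gap, but it deserves a line in a complete write-up.
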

\begin{definition}\label{D_inf-gen}
The function~$G$ in Theorem~\ref{RES_inf-gen} is called the
\textit{infinitesimal generator} of the one-parameter semigroup~$(\phi_t)$.
\end{definition}
Clearly, not every holomorphic function in~$\UD$ is a generator of a
one-parameter semigroup. Berkson and Porta \cite{BP} obtained the following
very useful characterization of infinitesimal generators (see
also~\cite[Theorem(1.4.19)]{Abate}).

\begin{result}\label{RES-BP}
A function $G:\UD\to\Complex$ is an infinitesimal generator (of some
one-parameter semigroup in~$\UD$) if and only if it can be represented as
\begin{equation}\label{EQ-BP}
G(z)=(\tau-z)(1-\overline\tau\,z) p(z),
\end{equation}
where $\tau$ is a point of~$\UDc$ and $p\in\Hol(\UD,\Complex)$ satisfies the
condition $\Re p(z)\ge0$ for all $z\in\UD$.

The representation~\eqref{EQ-BP} is unique unless $G\equiv0$, in which case
(trivially) $p\equiv0$ and $\tau$ is any point in~$\UDc$.
\end{result}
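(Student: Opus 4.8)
The plan is to prove the Berkson--Porta characterization (Theorem~\ref{RES-BP}) by treating the two implications separately, building on the fact that every generator is, by Theorem~\ref{RES_inf-gen}, a holomorphic function whose associated flow $(\phi_t)$ maps $\UD$ into itself. The easier direction is sufficiency: given $G(z)=(\tau-z)(1-\overline\tau\,z)p(z)$ with $\Re p\ge0$, I would verify that the solutions of~\eqref{EQ_ODEaut} stay in $\UD$ for all $t\ge0$. The natural tool is a subordination/barrier argument: compute $\frac{d}{dt}|w(t)|^2=2\,\Re\big(\overline{w}\,G(w)\big)$ and show this is $\le0$ whenever $|w|$ approaches $1$, so that the unit disk is forward-invariant. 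Factoring out $(\tau-z)(1-\overline\tau z)$, the sign of $\Re\big(\overline z\,(\tau-z)(1-\overline\tau z)\big)$ on $\de\UD$ must be checked against the sign condition $\Re p\ge0$; this is a routine but essential computation that pins down exactly why the Berkson--Porta factor is the right one.

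For the harder direction, necessity, I would start from an arbitrary generator $G$ with flow $(\phi_t)$ and first locate the candidate point $\tau\in\UDc$. Here I would invoke the Denjoy--Wolff theory for the semiflow: either $(\phi_t)$ has a common interior fixed point, in which case $\tau$ is that point and $G(\tau)=0$; or there is a boundary Denjoy--Wolff point $\tau\in\de\UD$ toward which all orbits converge. In either case $\tau$ is forced by the dynamics, and I would then \emph{define} $p$ by dividing, $p(z):=G(z)/\big((\tau-z)(1-\overline\tau z)\big)$, checking that the zero of the denominator at $z=\tau$ (when $\tau\in\UD$) is compensated by a corresponding zero of $G$, so that $p$ is holomorphic on all of $\UD$.

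The main obstacle, and the crux of the whole theorem, will be establishing $\Re p\ge0$ on $\UD$. The strategy is to run the invariance argument in reverse: because $(\phi_t)$ maps $\UD$ into $\UD$, the quantity $\frac{d}{dt}|\phi_t(z)|^2\big|_{t=0}=2\,\Re\big(\overline z\,G(z)\big)$ cannot point outward as $|z|\to1$, which translates via the factorization into a nonnegativity statement for $\Re p$ on the boundary, and then by the minimum principle for harmonic functions (applied to the harmonic function $\Re p$) to all of~$\UD$. I would make this rigorous by testing against the boundary of disks of radius $r<1$ and letting $r\to1$, using the continuity of the flow; the delicate point is handling the boundary case $\tau\in\de\UD$, where the factor $(\tau-z)(1-\overline\tau z)$ does not vanish inside $\UD$ but the estimates near $\tau$ require the Julia--Wolff--Carath\'eodory machinery to control the angular derivative of $G$ at~$\tau$.

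Finally, for uniqueness of the representation when $G\not\equiv0$, I would argue that $\tau$ is intrinsically determined (as the Denjoy--Wolff point of the flow, or equivalently as the unique boundary-regular null point with nonpositive angular derivative, together with the interior null point when it exists), after which $p=G/\big((\tau-z)(1-\overline\tau z)\big)$ is forced. The degenerate case $G\equiv0$ is immediate: then $p\equiv0$ and the factor $(\tau-z)(1-\overline\tau z)$ is irrelevant, so $\tau$ may be taken arbitrarily in~$\UDc$.
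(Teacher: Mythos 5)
First, a remark on the comparison itself: the paper does not prove this statement --- it quotes it from Berkson--Porta \cite{BP} (see also \cite[Theorem(1.4.19)]{Abate}) --- so your attempt has to be measured against the classical proof. Measured that way, there is a genuine gap, and it sits exactly where this paper keeps insisting the difficulty lies: at a boundary Denjoy--Wolff point. Your mechanism in both directions is monotonicity of the Euclidean modulus: for sufficiency you want $\frac{d}{dt}|w(t)|^2=2\Re\big(\overline{w}\,G(w)\big)\le0$ whenever $|w|$ is close to $1$, and for necessity you claim that $2\Re\big(\overline z\,G(z)\big)$ ``cannot point outward as $|z|\to1$'', followed by a minimum principle for $\Re p$. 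Both claims are false when $\tau\in\UC$. Concretely, take $\tau=1$ and $p(z)=(1+z)/(1-z)$, so $G(z)=(1-z)^2p(z)=1-z^2$; this is a genuine generator (the flow of hyperbolic automorphisms of $\UD$ fixing $\pm1$), yet $\Re\big(\overline z\,G(z)\big)=(\Re z)(1-|z|^2)>0$ whenever $\Re z>0$, arbitrarily close to $\UC$: orbits strictly \emph{increase} in modulus as they converge to $\tau$. The identity $\Re\big[\overline z(\tau-z)(1-\overline\tau z)\big]=-|\tau-z|^2\le 0$ holds only for $|z|=1$; inside $\UD$ the geometric factor has a nonzero imaginary part which is multiplied by $\Im p$, and $\Im p$ may blow up like $(1-|z|)^{-1}$, so no sign information survives near the boundary. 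For the same reason the minimum-principle step is vacuous: $\Re p$ is harmonic in $\UD$, but it has no a priori boundary values to which a minimum principle could be applied; the inequality $\Re p\ge0$ must be produced pointwise inside $\UD$.

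The classical proof replaces the modulus by the conformally correct Lyapunov functions. For necessity: after conjugation, if $\tau\in\UD$ (say $\tau=0$) the Schwarz lemma gives $|\phi_t(z)|\le|z|$ with equality at $t=0$, and differentiating at $t=0^{+}$ gives $\Re p\ge0$ --- this part of your sketch is essentially correct. If $\tau\in\UC$, one uses Julia's lemma instead: since $\phi_t'(\tau)\le1$, the Julia quotient $u(z):=|\tau-z|^2/(1-|z|^2)$ satisfies $u(\phi_t(z))\le u(z)$ with equality at $t=0$; differentiating at $t=0^{+}$, using $\partial u/\partial z=(\overline\tau-\overline z)(\tau\overline z-1)/(1-|z|^2)^2$ and the identity $(\overline\tau-\overline z)(\tau\overline z-1)(\tau-z)(1-\overline\tau z)=-|\tau-z|^2\,|1-\overline\tau z|^2$, one gets exactly $\Re p(z)\ge0$ at every $z\in\UD$, with no boundary limit and no minimum principle. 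For sufficiency the same functions prove forward invariance (hyperbolic disks about an interior $\tau$, horodisks at a boundary $\tau$), but one must additionally exclude that the solution reaches $\tau\in\UC$ in finite time --- equivalently, blows up to $\infty$ in finite time in the half-plane model $\dot w=2\tilde p(w)$, $\Re\tilde p\ge0$, obtained from the Cayley map; your sketch does not address this at all, and it requires a growth estimate such as the one furnished by the Nevanlinna--Herglotz representation (the very representation the paper uses in Lemma~\ref{LM_HRe_prime_positive}). Your closing appeal to ``Julia--Wolff--Carath\'eodory machinery'' is the right instinct, but it must \emph{replace} the modulus-based barrier, not patch it.
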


\noindent\textbf{Assumption.} In what follows we assume that all one-parameter
semigroups~$(\phi_t)$ we consider are non-trivial, \textsl{i.e.} at least one
of $\phi_t$'s is different from~$\id_\UD$. Except for the case of elliptic
automorphisms\footnote{\textsl{I.e.}, the case when all the functions~$\phi_t$
are automorphisms of~$\UD$ with a common fixed point~$\tau\in\UD$.}, this
condition in fact implies (see, \textsl{e.g.}, \cite[p.\,108--109]{Abate}) that
$\phi_t\neq\id_\UD$ for all $t>0$ and that the infinitesimal generator
$G\not\equiv0$.

It is an immediate consequence of the Schwarz Lemma that a self-map
$\phi\in\Hol(\UD,\UD)\setminus\{\id_\UD\}$ can have at most one fixed point
in~$\UD$. However, there can be much more so-called \textit{boundary fixed
points}.
\begin{definition}
Let $\phi\in\Hol(\UD,\UD)$ and $\sigma\in\UC:=\partial\UD$. The point $\sigma$
is called a \textit{contact point} of~$\phi$ if the angular limit
$\phi(\sigma):=\anglim_{z\to\sigma}\phi(z)$ exists and belongs to~$\UC$. If in
addition, $\varphi(\sigma)=\sigma$, then $\sigma$ is said to be a
\textit{boundary fixed point} of~$\phi$.
\end{definition}

It is known that if $\sigma\in\UC$ is a contact point
of~$\phi\in\Hol(\UD,\UD)$, then the angular limit
$\phi'(\sigma):=\anglim_{z\to\sigma}\big(\phi(z)-\phi(\sigma)\big)/(z-\sigma)$,
referred to as the \textit{angular derivative} of~$\phi$ at~$\sigma$, exists,
\textit{finite or infinite}, see,
\textsl{e.g.},~\cite[Proposition~4.13]{Pommerenke-II}.
\begin{definition}
A contact point (resp., boundary fixed point) $\sigma\in\UC$ of a
self-map~$\phi\in\Hol(\UD,\UD)$ is said to be \textit{regular} if
$\phi'(\sigma)\neq\infty$.
\end{definition}
\begin{remark}\label{RM_Julia-Lemma}
By the classical Julia Lemma (see, \textsl{e.g.}, \cite[Chapter~1,
Exercises~6,\,7]{Garnet}) the following two statements are equivalent:
\begin{itemize}
\item[(a)] $\phi\in\Hol(\UD,\UD)$ has a regular contact point
at~$\sigma\in\UC$;
\item[(b)] the dilation
$$
\alpha_\phi(\sigma):=\liminf_{\UD\ni z\to\sigma}\frac{1-|\phi(z)|}{1-|z|}
$$
is finite.
\end{itemize}
Moreover, if the above conditions are fulfilled, then
$\phi'(\sigma)=\overline\sigma\phi(\sigma)\alpha_\phi(\sigma)$.\REM{ and for
all $z\in\UD$,
$$
\frac{|\phi(z)-\phi(\sigma)|^2}{1-|\phi(z)|^2}\le|\phi'(\sigma)|\frac{|z-\sigma|^2}{1-|z|^2}.
$$}
\end{remark}

The following statement is fundamental for the study of $\Hol(\UD,\UD)$, see,
\textsl{e.g.}, \cite[\S\S1.3,\,1.4]{ShoikhetBook}.
\begin{result}[Denjoy\,--\,Wolff Theorem]
Let $\phi\in\Hol(\UD,\UD)\setminus\{\id_\UD\}$. Then there exists a unique
(boundary) fixed point~$\tau\in\UDc$ such that\,\footnote{If
$\tau\in\partial\UD$, then $\phi'(\tau)$ stands, of course, for the angular
derivative at~$\tau$.} $|\phi'(\tau)|\le 1$. Moreover, if $\phi$ is not an
elliptic automorphism of~$\UD$, then the iterates $\phi^{\circ n}\to\tau$
locally uniformly in~$\UD$ as $n\to+\infty$.
\end{result}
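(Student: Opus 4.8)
The plan is to split into the two mutually exclusive cases according to whether $\phi$ has a fixed point inside~$\UD$. Suppose first that $\phi(\tau)=\tau$ for some $\tau\in\UD$. Then the Schwarz\,--\,Pick lemma applied at~$\tau$ gives $|\phi'(\tau)|\le1$, with equality if and only if $\phi\in\Aut(\UD)$, i.e.\ $\phi$ is an elliptic automorphism. Hence, if $\phi$ is not an elliptic automorphism, then $|\phi'(\tau)|<1$ and $\phi$ strictly contracts the Poincar\'e distance, so the usual contraction argument yields $\phi^{\circ n}\to\tau$ locally uniformly. Uniqueness of an interior fixed point is immediate from the Schwarz lemma. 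This case is routine; the real content lies in the complementary one.

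Assume now that $\phi$ has no fixed point in~$\UD$. Here I would run Wolff's approximation argument. For $0<r<1$ put $\phi_r:=r\,\phi$; since $\phi_r(\UDc)$ is a compact subset of~$\UD$, the map $\phi_r$ contracts the hyperbolic metric strictly and so has a unique fixed point $\tau_r\in\UD$. Choosing $r_n\to1^-$ with $\tau_{r_n}\to\tau\in\UDc$, the absence of an interior fixed point forces $\tau\in\UC$. The decisive step is to upgrade this limit into \emph{Wolff's invariance property}: for each horocycle $E(\tau,R):=\{z\in\UD:\ |\tau-z|^2/(1-|z|^2)<R\}$ one has $\phi\big(E(\tau,R)\big)\subseteq E(\tau,R)$. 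I would obtain it by writing the invariant-disk form of the Schwarz\,--\,Pick inequality for each~$\phi_r$ centred at its fixed point~$\tau_r$ and letting $r\to1^-$: as $\tau_r\to\tau\in\UC$ the invariant hyperbolic disks degenerate into horocycles at~$\tau$, giving exactly the stated inclusion. By the Julia Lemma in the form of Remark~\ref{RM_Julia-Lemma}, horocycle invariance means that $\tau$ is a regular contact point with dilation $\alpha_\phi(\tau)\le1$; the same inclusion forces $\anglim_{z\to\tau}\phi(z)=\tau$, so $\tau$ is a boundary fixed point, and since $\phi(\tau)=\tau$ we get $|\phi'(\tau)|=\alpha_\phi(\tau)\le1$.

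For the convergence of the iterates, observe that $(\phi^{\circ n})$ is normal by Montel's theorem, so it suffices to identify every locally uniform subsequential limit with the constant~$\tau$. Let $h=\lim_k\phi^{\circ n_k}$ and pass to a further subsequence so that $\phi^{\circ(n_{k+1}-n_k)}\to\rho$ for some $\rho\in\Hol(\UD,\UD)$; the relation $\phi^{\circ n_{k+1}}=\phi^{\circ(n_{k+1}-n_k)}\circ\phi^{\circ n_k}$ gives $h=\rho\circ h$ in the limit. Were $h$ non-constant, its image would be open and fixed pointwise by~$\rho$, whence $\rho=\id_\UD$, i.e.\ $\phi^{\circ(n_{k+1}-n_k)}\to\id_\UD$; extracting a limit of $\phi^{\circ(n_{k+1}-n_k-1)}$ would then produce a holomorphic two-sided inverse of~$\phi$, making $\phi$ an elliptic automorphism, which is impossible here since an elliptic automorphism has an interior fixed point. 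Thus $h\equiv c$ is constant. If $c\in\UD$, then $\phi(c)=c$ by continuity, contradicting the case assumption; hence $c\in\UC$. As every iterate also preserves the horocycles at~$\tau$, each orbit $(\phi^{\circ n}(z))_n$ remains in a fixed horocycle $E(\tau,R_0)$, whose only boundary point is~$\tau$, so $c=\tau$. Since all subsequential limits coincide, $\phi^{\circ n}\to\tau$ locally uniformly.

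Uniqueness of~$\tau$ with $|\phi'(\tau)|\le1$ then follows from the convergence just established, since any second fixed point $\sigma\in\UDc$ with $|\phi'(\sigma)|\le1$ would, by the same attraction in the interior case or horocycle invariance in the boundary case, also capture the orbits, contradicting that they converge to the single point~$\tau$ (for elliptic automorphisms uniqueness of the interior fixed point is immediate from the Schwarz lemma). I expect the main obstacle to be the derivation of Wolff's invariance property: passing to the limit $r\to1^-$ in the Schwarz\,--\,Pick estimate while controlling the degeneration of the invariant hyperbolic disks into horocycles at the \emph{a priori} only subsequentially determined point~$\tau$ is the technical heart of the argument, and this single step simultaneously yields the existence of~$\tau$, the bound $|\phi'(\tau)|\le1$, and the trapping mechanism behind the convergence.
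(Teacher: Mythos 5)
The paper itself contains no proof of this statement: it is quoted as a classical result, with a pointer to \cite{ShoikhetBook}, so your attempt can only be measured against the standard arguments. What you outline is indeed the classical Wolff proof --- approximation by $\phi_r=r\phi$, degeneration of the Schwarz\,--\,Pick invariant hyperbolic disks into horocycles, Julia's Lemma in the form of Remark~\ref{RM_Julia-Lemma}, and a normal-families argument for the convergence of the iterates --- and most of it is sound: the limit inequality $|\tau-\phi(z)|^2/(1-|\phi(z)|^2)\le|\tau-z|^2/(1-|z|^2)$, the identification $\phi'(\tau)=\alpha_\phi(\tau)\le1$, the trapping of orbits in horodisks, and the uniqueness argument all go through.

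The genuine gap is in the step that rules out non-constant subsequential limits of the iterates. From $h=\rho\circ h$ with $h$ non-constant you correctly deduce $\rho=\id_\UD$, hence $\phi^{\circ(n_{k+1}-n_k)}\to\id_\UD$, and extracting a limit of $\phi^{\circ(n_{k+1}-n_k-1)}$ does (with some care about composing limits) produce a holomorphic two-sided inverse of~$\phi$. But a two-sided inverse only makes $\phi$ an \emph{automorphism} of~$\UD$, not an \emph{elliptic} automorphism, so your contradiction (``an elliptic automorphism has an interior fixed point'') does not apply: in the case under consideration $\phi$ has no interior fixed point, so the automorphism you have produced would be parabolic or hyperbolic, and nothing you wrote excludes that. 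This is precisely the critical case, since automorphisms are exactly the self-maps whose iterates can admit non-constant limit functions. The gap is easy to close, but it must be closed: conjugating $\UD$ to a half-plane so that a boundary fixed point of the automorphism goes to~$\infty$, the map becomes $z\mapsto az+b$ with $a>0$, $b\in\Real$, and a direct computation shows its iterates converge locally uniformly to a single boundary point; hence no subsequence of iterates with exponents tending to $\infty$ can converge to~$\id_\UD$, which restores the contradiction. (If instead the exponents $n_{k+1}-n_k$ stay bounded, then $\phi^{\circ m}=\id_\UD$ for some $m\ge1$, forcing $\phi$ to be a finite-order, hence elliptic, automorphism --- that sub-case your sentence does cover.) Equivalently, one can dispose of M\"obius self-maps by direct computation at the outset and run the normal-families argument only for non-automorphisms.
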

The point~$\tau$ in the above theorem is called the \textit{Denjoy\,--\,Wolff
point} of~$\phi$ (abbreviated, ``DW-point"). It is known (see, \textsl{e.g.},
\cite[Corollary(1.4.18), Theorem(1.4.19)]{Abate}) that for a one-parameter
semigroup~$(\phi_t)$ the functions $\phi_t$, $t>0$, share the same DW-point,
which coincides with the point~$\tau$ in the Berkson\,--\,Porta
formula~\eqref{EQ-BP} for the infinitesimal generator~$G$ of~$(\phi_t)$. This
point is called the \textit{Denjoy\,--\,Wolff point of the one-parameter
semigroup~$(\phi_t)$}. The following theorem allows also to define boundary
fixed points and boundary regular fixed points of one-parameter semigroups.
\begin{result}[\protect{\cite[Theorems~1\,and~5]{CMP2004}; \cite[Lemmas~1\,and~3]{CMP2006}}]
\label{RES_common-set-of-BFPs} Let $(\phi_t)$ be a one-parameter semigroup
in~$\UD$ and $\sigma\in\UC$. Then:
\begin{itemize}
\item[(i)] $\sigma$ is a fixed point of $\phi_t$ for some~$t>0$ if and only
if it is a fixed point of $\phi_t$
for all~$t>0$;

\item[(ii)] $\sigma$ is a boundary regular fixed point of~$\phi_t$ for
some $t>0$ if and only if it is a boundary regular fixed point of $\phi_t$ for
all~$t>0$.
\end{itemize}
\end{result}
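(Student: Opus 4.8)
The plan is to strip away the dependence on the parameter~$t$ by transferring everything to the infinitesimal generator~$G$, which is a \emph{single} holomorphic function attached to the whole semigroup (Theorem~\ref{RES_inf-gen}). In both statements the implication ``for all $t>0$'' $\Rightarrow$ ``for some $t>0$'' is trivial, so only the converse needs an argument. The core of that argument is a characterisation of boundary (regular) fixed points in terms of the non-tangential behaviour of~$G$ at~$\sigma$: I would show that $\sigma$ is a boundary fixed point of the whole semigroup precisely when $\anglim_{z\to\sigma}G(z)=0$, and that it is moreover \emph{regular} precisely when the stronger limit $\lambda:=\anglim_{z\to\sigma}G(z)/(z-\sigma)$ exists and is finite; in the latter case $\lambda\in\R$ and $\phi_t'(\sigma)=e^{\lambda t}$ for every $t\ge0$. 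Since $G$ does not depend on~$t$, any such condition is automatically shared by all the maps~$\phi_t$ at once, which is exactly what the theorem asserts.

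For part~(ii) I would argue as follows. Assume $\sigma$ is a boundary regular fixed point of $\phi_{t_0}$ for one $t_0>0$, so that by Remark~\ref{RM_Julia-Lemma} the dilation $\alpha_{\phi_{t_0}}(\sigma)$ is finite and, by Julia's Lemma, $\phi_{t_0}$ carries each horocycle at~$\sigma$ into a horocycle at~$\sigma$. Differentiating the semigroup relation $\partial_t\phi_t=G\circ\phi_t$ at $t=0$ and passing to the non-tangential limit identifies $\lambda=\anglim_{z\to\sigma}G(z)/(z-\sigma)$ as the generator of the family $t\mapsto\phi_t'(\sigma)$; the chain rule for angular derivatives at a common boundary fixed point (itself a consequence of the horocycle inclusions in Julia's Lemma) gives $\phi_{t+s}'(\sigma)=\phi_t'(\sigma)\,\phi_s'(\sigma)$, whence $\phi_t'(\sigma)=e^{\lambda t}$ with $\lambda$ finite and real. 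As $e^{\lambda t}\neq\infty$ for every $t$ once $\lambda$ is finite, regularity propagates from~$t_0$ to all $t>0$.

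For part~(i) the fixed point need not be regular, so Julia's Lemma is not available directly and one must work with the flow itself. The orbit $t\mapsto\phi_t(z)$ solves the autonomous equation~\eqref{EQ_ODEaut}, so if the generator had a non-zero non-tangential limit at~$\sigma$ the orbits would be pushed off~$\sigma$ at a definite rate, contradicting $\phi_{t_0}(\sigma)=\sigma$; conversely $\anglim_{z\to\sigma}G(z)=0$ forces every~$\phi_t$ to fix~$\sigma$ upon integrating the equation. Thus the boundary fixed points of $\phi_{t_0}$ are read off from the single condition $\anglim_{z\to\sigma}G(z)=0$, which is insensitive to~$t$ and therefore holds for all~$\phi_t$ simultaneously.

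The main obstacle is the ``hard'' direction in each case, namely extracting information about~$G$ at~$\sigma$ from the knowledge that a \emph{single} time-$t_0$ map fixes~$\sigma$. The difficulty is that angular limits behave badly under composition and under limits, so one cannot simply iterate $\phi_{t_0}=\phi_{t_0/n}^{\circ n}$ to descend to small times. The device that survives these operations is the monotone control of horocycles furnished by Julia's Lemma, together with the real-analyticity of $t\mapsto\phi_t(z)$ coming from~\eqref{EQ_ODEaut}; the passage from the discrete relation at time~$t_0$ to all times --- the ``divisibility'' step from $t_0$ to $t_0/n$ --- is ultimately what the $t$-independence of~$G$ resolves, since once the relevant angular condition on~$G$ is established it pertains to the whole family $(\phi_t)$ at once.
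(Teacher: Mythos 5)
Your overall strategy---trade the $t$-dependent statement for a $t$-independent condition on a single object attached to the whole semigroup---is the right instinct, and it is how the actual proofs work; but the object that does the work there is the K\oe{}nigs function, not the generator, and the specific characterizations of~$G$ that you build on are where the argument breaks. The fatal gap is in part~(i): the equivalence ``$\sigma$ is a boundary fixed point of $(\phi_t)$ $\Longleftrightarrow$ $\anglim_{z\to\sigma}G(z)=0$'' is \emph{false} in the direction you need. Counterexample: let $g(x):=\min(x^2,1)$ and $\Omega:=\{x+iy:\ x>0,\ |y|<g(x)\}$. Since $g$ is non-decreasing, $\Omega+t\subset\Omega$ for all $t\ge0$, so $\phi_t:=h^{-1}\circ(h+t)$, where $h$ maps $\UD$ conformally onto~$\Omega$, is a one-parameter semigroup with K\oe{}nigs function~$h$ and generator $G=1/h'$ (differentiate the Abel equation in~$t$). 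The prime end of~$\Omega$ at the cusp vertex $w=0$ is trivial, so $h$ has unrestricted limit~$0$ at the corresponding point $\sigma\in\UC$; consequently, for every $t>0$, $\phi_t(z)=h^{-1}(h(z)+t)\to h^{-1}(t)\in\UD$ as $z\to\sigma$, so $\sigma$ is not a fixed point --- not even a contact point --- of any~$\phi_t$. Yet $\Omega$ has an outward-pointing (zero-angle) cusp at~$0$, and the standard hyperbolic-metric estimates at such a cusp (the distance to $\partial\Omega$ decays like $x^2$ while $1-|z|$ decays like $e^{-c/x}$) give $\anglim_{z\to\sigma}h'(z)=\infty$, i.e.\ $\anglim_{z\to\sigma}G(z)=0$. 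So a non-tangential null point of~$G$ does not ``force every $\phi_t$ to fix~$\sigma$,'' and your proof of~(i) collapses. (Your forward direction is also incomplete: you argue by contradiction only against the case where $\anglim G$ exists and is nonzero, whereas a priori the limit may simply fail to exist.)

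For part~(ii) the characterization you invoke ($\sigma$ is a boundary regular fixed point of the semigroup iff $\lambda:=\anglim_{z\to\sigma}G(z)/(z-\sigma)$ exists finitely, and then $\phi_t'(\sigma)=e^{\lambda t}$) is a true theorem --- essentially the content of~\cite{CMP2006} --- but it is at least as deep as the statement being proved, and your derivation of it is circular. The multiplication rule $\phi_{t+s}'(\sigma)=\phi_t'(\sigma)\,\phi_s'(\sigma)$ presupposes that $\phi_t$ and $\phi_s$ both have $\sigma$ as a regular boundary fixed point, which for general $t,s$ is exactly what is to be proved; and ``differentiating $\partial_t\phi_t=G\circ\phi_t$ at $t=0$ and passing to the non-tangential limit'' is precisely the unjustified interchange of limits that constitutes the whole difficulty --- nothing in your sketch extracts information about~$G$ at~$\sigma$ from the hypothesis on the \emph{single} map~$\phi_{t_0}$. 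Note, finally, that the paper itself does not prove this result: it is quoted from \cite{CMP2004} and~\cite{CMP2006}. In those proofs, and in the closely related machinery this paper does develop (Remark~\ref{RM_h-to-infty=fixed}, Proposition~\ref{Pr_repelling_iff}), the $t$-independent pivot is the K\oe{}nigs function: $\sigma$ is a fixed point of some, equivalently every, $\phi_t$ iff $h(r\sigma)\to\infty$ as $r\to1-0$, and regularity is read off from the existence of an invariant strip in $\Omega=h(\UD)$; since $h$ does not depend on~$t$, the $t$-independence is immediate, and neither of the two problems above arises.
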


\begin{remark}\label{RM_cont-DW-theorem}
The Denjoy\,--\,Wolff Theorem implies easily, see,
\textsl{e.g.},~\cite[Theorem(1.4.17)]{Abate}, that similar to the case of
discrete iteration, any one-parameter semigroup~$(\phi_t)$ such that $\phi_t$'s
are not elliptic automorphisms of~$\UD$ for~$t>0$, converges locally uniformly
in~$\UD$ to its Denjoy\,--\,Wolff point as $t\to+\infty$.
\end{remark}

Besides infinitesimal representation given by Theorem~\ref{RES_inf-gen},
one-parameter semigroups can be represented by means of the so-called
\textit{linearization models}. By a linearization model for a one-parameter
semigroup~$(\phi_t)$ we mean a three-tuple $\big(h,\Omega,\mathcal T\big)$,
where $\mathcal T=(\mathcal L_t)$ is a one-parameter semigroup in~$\ComplexE$
consisting of M\"obius transformations, $\Omega\subset\ComplexE$ is a simply
connected domain with $|\ComplexE\setminus\Omega|>1$, and $h$ is a conformal
mapping of~$\UD$ onto $\Omega$ such that $\mathcal L_t(\Omega)\subset\Omega$
and $\mathcal L_t\circ h=h\circ\phi_t$ for all $t\ge0$. The choice of a
``standard" linearization model depends on whether the DW-point lies in~$\UD$
or on its boundary.
\begin{result}[see, \textsl{e.g.}, \protect{\cite[Theorems\,(1.4.22),\,(1.4.23)]{Abate}}]
Let be $(\phi_t)$ be a one-parameter semigroup in~$\UD$ and $\tau$ its
Denjoy\,--\,Wolff point. Then:
\begin{itemize}
\item[(A)] If $\tau\in\UD$, then there exists a univalent holomorphic function
$h:\UD\to\Complex$ with $h(\tau)=0$ satisfying the Schr\"oder functional
equation
$$
h\circ \phi_t=\phi_t'(\tau)h \quad\text{for all~$t\ge0$}.
$$ Such a function~$h$ is unique up to multiplication by a complex constant
$h\mapsto c h$, where $c\in\Complex^*$.

\item[(B)] If $\tau\in\UC$, then there exists a univalent holomorphic function
$h:\UD\to\Complex$ satisfying the Abel functional equation
$$
h\circ\phi_t=h+t\quad\text{for all~$t\ge0$}.
$$ Such a function~$h$ is unique up to a translation $h\mapsto h+c$, where $c\in\Complex$.
\end{itemize}
\end{result}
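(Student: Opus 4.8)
The plan is to build the linearizer $h$ directly from the infinitesimal generator $G$ furnished by Theorem~\ref{RES_inf-gen}, reading off the first-order ODE that $h$ must satisfy by differentiating the desired functional equation in $t$. Before that I record the zero set of $G$: since $G(z_0)=0$ forces $t\mapsto\phi_t(z_0)$ to be the constant solution of~\eqref{EQ_ODEaut}, the zeros of $G$ in $\UD$ are exactly the common interior fixed points of $(\phi_t)$, of which the Schwarz lemma permits at most one. Hence in case~(A) the point $\tau$ is the unique zero of $G$, and it is simple: differentiating the semiflow identity $\partial_t\phi_t=G\circ\phi_t$ from~\eqref{EQ_ODEaut} in the space variable at $\tau$ gives $\phi_t'(\tau)=e^{\lambda t}$ with $\lambda:=G'(\tau)$, and $\lambda=0$ would yield $\phi_t'(\tau)=1$, forcing $\phi_t=\id_\UD$ by the equality case of the Schwarz lemma and contradicting non-triviality; in case~(B) there is no interior fixed point, so $G$ never vanishes on $\UD$.

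For existence, in case~(A) differentiating the Schr\"oder equation $h\circ\phi_t=e^{\lambda t}h$ at $t=0$ shows that $h$ should solve $h'G=\lambda h$, so I set
\[
h(z)=(z-\tau)\exp\int_\tau^z\Bigl(\frac{\lambda}{G(\zeta)}-\frac1{\zeta-\tau}\Bigr)\,d\zeta,
\]
whose integrand is holomorphic on $\UD$ (the simple pole of $\lambda/G$ at $\tau$ cancels) and hence, as $\UD$ is simply connected, defines a single-valued holomorphic $h$ with $h(\tau)=0$, $h'(\tau)=1$ and $h'\neq0$. In case~(B) the same differentiation of $h\circ\phi_t=h+t$ gives $h'G=1$, so I put $h(z)=\int_{z_0}^z d\zeta/G(\zeta)$, again single-valued with $h'=1/G\neq0$. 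The functional equations then follow immediately: with $R(t):=h(\phi_t(z))$ and $\partial_t\phi_t=G\circ\phi_t$ one computes $R'(t)=\lambda R(t)$ (case~A) resp.\ $R'(t)=1$ (case~B), and integrating from $R(0)=h(z)$ gives $h\circ\phi_t=e^{\lambda t}h$ resp.\ $h\circ\phi_t=h+t$.

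Univalence is the step needing a genuine idea, since the local injectivity from $h'\neq0$ does not suffice. In case~(A) with $\Re\lambda<0$ the maps $\phi_t\to\tau$ locally uniformly (Remark~\ref{RM_cont-DW-theorem}), so $h(a)=h(b)$ gives $h(\phi_t a)=h(\phi_t b)$ with $\phi_t a,\phi_t b$ eventually in a neighbourhood of $\tau$ on which $h$ is injective; as each $\phi_t$ is univalent this forces $a=b$. When $\Re\lambda=0$ the $\phi_t$ are elliptic automorphisms and $h$ is a constant multiple of the disc automorphism sending $\tau$ to $0$, hence univalent. For case~(B) I would transfer to the right half-plane by the Cayley map $\sigma(z)=(\tau+z)/(\tau-z)$, which sends $\tau$ to $\infty$ and $\UD$ onto $\Hr$. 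Using the Berkson--Porta form~\eqref{EQ-BP} together with $1-\overline\tau z=\overline\tau(\tau-z)$ for $|\tau|=1$ one finds $G(z)=\overline\tau(\tau-z)^2p(z)$ and $\sigma'(z)=2\tau/(\tau-z)^2$, so the transported generator is simply $\tilde G=\sigma'\cdot(G\circ\sigma^{-1})=2\,p\circ\sigma^{-1}$. Consequently $\tilde h:=h\circ\sigma^{-1}$ satisfies $\tilde h'=1/\tilde G=1/(2p)$, and since $\Re(1/p)=\Re p/|p|^2\ge0$ we get $\Re\tilde h'\ge0$ on the \emph{convex} domain $\Hr$. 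If $\Re\tilde h'\equiv0$ then $\tilde h$ is affine; otherwise $\Re\tilde h'>0$ by the minimum principle, and then the difference quotient $\tfrac{\tilde h(b)-\tilde h(a)}{b-a}=\int_0^1\tilde h'\bigl(a+s(b-a)\bigr)\,ds$ has positive real part and is nonzero (Noshiro--Warschawski), so $\tilde h$, hence $h=\tilde h\circ\sigma$, is univalent.

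Uniqueness follows from the same invariance. If $h_1,h_2$ are two univalent solutions, then in case~(A) the ratio $v:=h_1/h_2$ (holomorphic across the common simple zero at $\tau$) satisfies $v\circ\phi_t=v$, while in case~(B) the difference $u:=h_1-h_2$ satisfies $u\circ\phi_t=u$; differentiating in $t$ at $0$ gives $v'G\equiv0$ resp.\ $u'G\equiv0$, and as $G$ vanishes at most at $\tau$ this forces $v$ resp.\ $u$ to be constant. Thus $h_1=c\,h_2$ with $c\in\Complex^*$ in case~(A) and $h_1=h_2+c$ in case~(B), as claimed. I expect the half-plane reduction behind the univalence in~(B) to be the one genuinely non-routine point; everything else is bookkeeping around the generator ODE.
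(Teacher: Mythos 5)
Your proof is correct, but there is nothing in the paper to compare it against: the statement is quoted as a known result with a pointer to \cite[Theorems\,(1.4.22),\,(1.4.23)]{Abate}, and the paper never proves it. Your argument is a sound, self-contained derivation along the classical lines. The reduction of the Schr\"oder and Abel equations to the ODEs $h'G=\lambda h$ and $h'G=1$ for the generator $G$ of Theorem~\ref{RES_inf-gen}, the explicit integral formulas for $h$, and the recovery of the functional equations by integrating $\frac{d}{dt}\,h\bigl(\phi_t(z)\bigr)$ are all carried out correctly, including the two facts this hinges on: that the zero set of $G$ in $\UD$ is exactly the set of common interior fixed points (so $G$ vanishes only at $\tau$ in case~(A) and nowhere in case~(B)), and that $\lambda=G'(\tau)\neq0$. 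The univalence arguments are also sound: the dynamical argument via $\phi_t\to\tau$ when $\Re\lambda<0$, the reduction to an elliptic automorphism when $\Re\lambda=0$ (where $h=cM$ follows from the same ratio argument you use for uniqueness, with $M$ the disk automorphism solving the Schr\"oder equation), and, in case~(B), the Cayley transport giving $\Re\tilde h'=\Re\bigl(1/(2\,p\circ\sigma^{-1})\bigr)\ge0$ on a convex domain together with Noshiro--Warschawski. It is worth noting that this last device is precisely the one the paper itself employs in the proof of Proposition~\ref{PR_lifting}, where univalence of $\tilde h_0$ is deduced from $\tilde h_0'=1/\tilde G$, $\Re\tilde G\ge0$, and the Noshiro--Warschawski theorem; so your treatment of case~(B) is in the same spirit as the paper's handling of the lifted semigroup, even though applied to a different statement. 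The only slip is cosmetic: the transported generator should be written $\tilde G=(\sigma'\cdot G)\circ\sigma^{-1}$ rather than $\sigma'\cdot(G\circ\sigma^{-1})$, but the computation you actually perform, yielding $\tilde G=2\,p\circ\sigma^{-1}$, is the correct one.
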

The function $h$ in the above theorem is called the \textit{K\oe{}nigs function}
of the one-parameter semigroup~$(\phi_t)$. Usually, to fix the unique solution
to the Schr\"oder and Abel equations, one assumes that $h'(\tau)=1$ or $h(0)=0$
in the former and latter cases, respectively. However, for our purposes it will
be more convenient not to impose this normalization in the case of boundary
DW-point.

\subsection{Main results.}

Although  one-parameter semigroups in~$\UD$ constitute a classical topic and
the study of holomorphic self-maps of~$\UD$ suggests looking for angular limits
on the boundary, it was not realized before~\cite{CMP2004} that
\textit{elements of one-parameter semigroups have angular limits everywhere on
the unit circle}. This fact does not seem to be widely known: in that paper it
was stated only in a proof (the proof of Theorem~5) and only for the case of
boundary DW-point. We note that it is, in fact, enough to consider this case,
as one can see using the idea from~\cite[Proof of Theorem~3.3]{Tiz}. The
corresponding auxiliary statement is proved in section~\ref{S_lifting}, which
allows us to concentrate in what follows on the case of boundary DW-point.

First of our main results, Theorem~\ref{TH_anglims} is a ``uniform version'' of
the fact stated above. As usual we denote by $\phi_t(\sigma)$, $\sigma\in\UC$,
the angular limit of~$\phi_t$ at~$\sigma$. We show that for each Stolz angle
$S$ with vertex~$\sigma\in\UC$ the convergence $\phi_t(z)\to\phi_t(\sigma)$  as
$S\ni z\to\sigma$ is \textit{locally uniform in~$t$}.

Further in Proposition~\ref{PR_cont-in-t} we show that a one-parameter
semigroup~$(\phi_t)$ being considered as a family of maps $[0,+\infty)\ni
t\mapsto \phi_t(z)\in\UDc$ parameterized by $z\in\UDc$, is uniformly
continuous. In particular,  \textit{for each~$\sigma\in\UC$ the
trajectory~$t\mapsto \phi_t(\sigma)$ is continuous}. Moreover, as a byproduct,
in section~\ref{SS_rem-loc-beh} we will see (Remark~\ref{RM_conv-to-DW}) that
\textit{if the DW-point~$\tau$ belongs to~$\partial\UD$, then $\sigma\in\UC$ is
either a boundary fixed point of~$(\phi_t)$, or $\phi_t(\sigma)\to\tau$ as
$t\to+\infty$.} The analogous statement for~$\tau\in\UD$ follows readily
from~\cite[Theorem~4]{CMP2004}.

Despite of the above remarkable facts, the extension of $\phi_t$ by angular
limits is not necessarily continuous on~$\UC$. In other words, the unrestricted
limits do not need to exists everywhere on~$\UC$. As a ``compensation'', the
\textit{unrestricted limits still do exist at all (regular and non-regular)
boundary fixed points}. For the first time this was proved
in~\cite[Corollary~3]{CMP2004} for the case of interior DW-point. We prove a
``uniform version'' of this statement for the boundary DW-point, see
Theorem~\ref{TH_cont-at-fixed-points}, which automatically extends to the
interior case due to Proposition~\ref{PR_lifting}. For all repelling fixed
points on~$\UC$ we could employ essentially the same idea as in~\cite{CMP2004}:
the key point is to use the translational invariance of~$\Omega:=h(\UD)$ in
order to prove that the K\oe{}nigs function~$h$ has unrestricted limits at all
boundary repelling fixed points. However, the analogous statement for the
DW-point does not hold, and we had to give an independent proof for this
distinguished fixed point. This aspect is really new in the boundary DW-point
case.

The proof of Theorem~\ref{TH_cont-at-fixed-points} involves several more
technical results, \textsl{e.g.}, Propositions~\ref{Pr_repelling_iff},
\ref{PR_if_h_cont_then_phi_cont}, \ref{PR_continuous_h}, and \ref{PR_hyperbol},
which might be of some interest for specialists.

In section~\ref{SS_rem-loc-beh} we consider three examples. The first two of
them are related to the local dynamical behaviour of $\phi_t:\UDc\to\UDc$ in a
neighbourhood of the boundary DW-point. The third example shows that
Theorem~\ref{TH_cont-at-fixed-points} cannot be extended to contact points of
one-parameter semigroups.

\begin{remark}
One might ask if the unrestricted limits exist also \textit{at all contact
points} of the one-parameter semigroup. The answer is ``no'', see
Remark~\ref{RM_no-for-contract-points}. Going in another direction one might
also ask if at every boundary fixed point there exists the unrestricted limit
of the derivative~$\phi_t'$ and/or the ``unrestricted derivative'':
$$
\lim_{\UD\ni z\to\sigma}\phi_t'(z),\qquad \lim_{\UD\ni
z\to\sigma}\frac{\phi_t(z)-\phi_t(\sigma)}{z-\sigma}.
$$
The answer is again ``no''. Both unrestricted limits above fail to exist if our
boundary regular fixed point~$\sigma$ is not isolated, \textsl{i.e.} if it is a
limit of a sequence of boundary fixed points different from~$\sigma$. For
repelling fixed points such examples can be obtained by modifying the
construction given in~\cite[p.\,260]{AnaFlows}. The non-isolated hyperbolic
DW-point appears in Example~\ref{EX1}.
\end{remark}

The last section of this paper, section~\ref{S_EF-diskalgebra}, is devoted to a
question concerning boundary behaviour of the non-autonomous generalization of
one-parameter semigroup, the so-called \textit{evolution families} in~$\UD$. It
is known that any univalent~$\varphi\in\Hol(\UD,\UD)$ can be embedded into an
evolution family. Therefore, we cannot expect any results for evolution
families similar to the above results for one-parameter semigroups. However,
there is still the question whether the algebraic structure of evolution family
affects the relationships between various analytic properties, in particular
those of regularity on the boundary. We prove
(Proposition~\ref{PR_EF-diskalgebra}) that if all the elements of an evolution
family~$(\varphi_{s,t})$ are continuous in~$\UDc$, then the map $t\mapsto
\varphi_{s,t}$ is continuous w.r.t. the supremum norm for any fixed~$s\ge0$.
The proof is based on an extended version of the No-Koebe-Arcs Theorem, see,
\textsl{e.g.}, \cite[Theorem~9.2]{Pommerenke}.

\section{Lifting one-parameter semigroups with the interior DW-point}\label{S_lifting}
Obviously, using M\"obius transformations of~$\UD$ one can assume that the
DW-point of a given one-parameter semigroup is either $\tau=0$ or $\tau=1$. In
fact, we can further reduce, up to some extend, the case of interior DW-point
($\tau=0$) to the case of boundary DW-point ($\tau=1$). This is the meaning of
the following elementary proposition\footnote{The same idea in a bit different
context was used in~\cite{Tiz}.}. In what follows for
$a\in\Complex^*:=\Complex\setminus\{0\}$, we denote
$$\UH_a:=\{z\in\Complex:\Re(\overline az)>0\}.$$
\begin{proposition}\label{PR_lifting}
Let $(\phi_t)$ be a  non-trivial one-parameter semigroup in~$\Hol(\UD,\UD)$
with the DW-point $\tau=0$. Let $h$ be its K\oe{}nigs functions and $G$ its
infinitesimal generator. Then there exists a unique one-parametric semigroup
$(\tilde\phi_t)$ in $\Hol(\UH_1,\UH_1)$ with the DW-point~$\tilde\tau=\infty$
such that for all $t\ge0$ and all $\tilde z\in\UH_1$ we have
\begin{equation}\label{EQ_phi-phi_tilde}
\exp\big(-\tilde\phi_t(\tilde z)\big)=\phi_t\big(\exp(-\tilde z)\big).
\end{equation}
Moreover, the K\oe{}nigs function~$\tilde h_0$ and the infinitesimal
generator~$\tilde G$ of the one-parameter semigroup~$(\tilde \phi_t)$ are given
by
\begin{equation}\label{EQ_phi-phi_tilde_h_G}
\tilde h_0(\tilde z)=-\frac{\tilde h(\tilde z)}{G'(0)},\quad \tilde G(\tilde
z)=-\frac{G\big(\exp(-\tilde z)\big)}{\exp(-\tilde z)}\qquad\text{for all
$\tilde z\in\UH_1$,}
\end{equation}
where $\tilde h:\UH_1\to\Complex$ is a holomorphic lifting of $\UH_1\ni\tilde
z\mapsto h\big(\exp(-\tilde z)\big)\in\Complex^*$ w.r.t. the covering map
$\Complex\ni\tilde w\mapsto \exp(-\tilde w)\in\Complex^*$.
\end{proposition}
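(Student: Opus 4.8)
The plan is to obtain $(\tilde\phi_t)$ as the lift of $(\phi_t)$ through the universal covering map $E(\tilde z):=\exp(-\tilde z)$. First I record the elementary geometry: for $\tilde z\in\UH_1$ one has $|E(\tilde z)|=\exp(-\Re\tilde z)<1$ and $E(\tilde z)\neq0$, so $E$ restricts to a holomorphic covering $E\colon\UH_1\to\UD^*:=\UD\setminus\{0\}$ under which $\Re\tilde z\to+\infty$ corresponds to $E(\tilde z)\to0$. Since $\tau=0$ is the DW-point, each $\phi_t$ fixes $0$ and is univalent, hence $\phi_t(\UD^*)\subset\UD^*$. I would lift once and for all rather than term by term: the map $[0,+\infty)\times\UH_1\ni(t,\tilde z)\mapsto\phi_t(E(\tilde z))\in\Complex^*$ is continuous and its domain is contractible, so it admits a unique continuous lift $F$ with $\exp(-F(t,\tilde z))=\phi_t(E(\tilde z))$ and $F(0,\cdot)=\id$. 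Putting $\tilde\phi_t:=F(t,\cdot)$, each $\tilde\phi_t$ is holomorphic (a lift of a holomorphic map) and $|\exp(-\tilde\phi_t(\tilde z))|<1$ forces $\Re\tilde\phi_t>0$, i.e. $\tilde\phi_t\in\Hol(\UH_1,\UH_1)$; this is exactly~\eqref{EQ_phi-phi_tilde}.

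Next I verify that $(\tilde\phi_t)$ is a one-parameter semigroup and is the only one satisfying~\eqref{EQ_phi-phi_tilde}. Conditions (i) and (iii) are immediate from $F(0,\cdot)=\id$ and the continuity of $F$. For the composition law (ii), both $\tilde\phi_{t+s}$ and $\tilde\phi_t\circ\tilde\phi_s$ are lifts of $\phi_{t+s}\circ E$, so their difference is a continuous $2\pi i\,\Int$-valued function on a connected set, hence constant, and it vanishes at $t=s=0$. The same monodromy principle gives uniqueness: any competing semigroup consists, for each $t$, of a lift of $\phi_t\circ E$, so it differs from $\tilde\phi_t$ by $2\pi i\,k(t)$ with $k\colon[0,+\infty)\to\Int$ continuous, hence constant, and $k(0)=0$. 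Finally, $(\tilde\phi_t)$ has no finite fixed point (a finite fixed point of some $\tilde\phi_t$ would project to an interior fixed point of $\phi_t$ lying in $\UD^*$), while $\infty$ is a common boundary fixed point and $\tilde\phi_t(\tilde z)\to\infty$ as $t\to+\infty$ for each $\tilde z$ (from $E(\tilde\phi_{nt}(\tilde z))=\phi_{nt}(E(\tilde z))$ and the behaviour of $(\phi_t)$ near $0$, the elliptic-automorphism case, where $\tilde\phi_t$ is a translation, being checked directly); therefore $\tilde\tau=\infty$.

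The generator formula in~\eqref{EQ_phi-phi_tilde_h_G} is a one-line differentiation: differentiating $\exp(-\tilde\phi_t(\tilde z))=\phi_t(E(\tilde z))$ in $t$ at $t=0$, using $\tilde\phi_0=\id$ and Theorem~\ref{RES_inf-gen}, gives $-\tilde G(\tilde z)\exp(-\tilde z)=G(\exp(-\tilde z))$, i.e. the stated $\tilde G$. For the K\oe{}nigs function I first lift $\tilde z\mapsto h(E(\tilde z))$, which lands in $\Complex^*$ because $h(0)=0$ and $h$ is univalent, to get $\tilde h$ with $\exp(-\tilde h(\tilde z))=h(\exp(-\tilde z))$. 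Evaluating this at $\tilde\phi_t(\tilde z)$ and inserting the Schr\"oder equation $h\circ\phi_t=\phi_t'(0)\,h=\exp(tG'(0))\,h$ (the identity $\phi_t'(0)=\exp(tG'(0))$ following from the semigroup ODE at $0$) gives $\exp(-\tilde h(\tilde\phi_t(\tilde z)))=\exp(tG'(0)-\tilde h(\tilde z))$; removing the $2\pi i\,\Int$ ambiguity by continuity in $t$ yields $\tilde h\circ\tilde\phi_t=\tilde h-tG'(0)$, so that $\tilde h_0:=-\tilde h/G'(0)$ satisfies the Abel equation $\tilde h_0\circ\tilde\phi_t=\tilde h_0+t$. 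Here $G'(0)\neq0$ because the semigroup is non-trivial.

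The point that needs genuine work, beyond the routine bookkeeping of the $2\pi i\,\Int$ ambiguities, is that $\tilde h_0$ is \emph{univalent}, so that it really is the K\oe{}nigs function rather than just some holomorphic solution of the Abel equation. I would argue this by covering spaces: $h$ restricts to a biholomorphism $\UD^*\to\Omega^*:=h(\UD)\setminus\{0\}$, and since $\Omega:=h(\UD)$ is simply connected with $0\in\Omega$, the set $\Omega^*$ has fundamental group $\Int$ with $\exp(-\,\cdot\,)$ a universal cover; as $E\colon\UH_1\to\UD^*$ is also a universal cover, the lift $\tilde h$ of this biholomorphism is a biholomorphism of $\UH_1$ onto the universal cover of $\Omega^*$, in particular univalent. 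Hence $\tilde h_0$ is univalent and, by the linearization theorem for the boundary DW-point, is the K\oe{}nigs function of $(\tilde\phi_t)$, unique up to an additive constant; the freedom $\tilde h\mapsto\tilde h+2\pi i\,\Int$ in the choice of lift accounts precisely for that additive constant.
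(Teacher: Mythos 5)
Your proof is correct, but it takes a genuinely different route from the paper's. The paper argues backwards from the formulas: assuming \eqref{EQ_phi-phi_tilde}, it lifts the Schr\"oder equation to the Abel equation (just as you do), derives the formula for $\tilde G$, and then observes via the Berkson--Porta representation $G(z)=-zp(z)$, $\Re p(z)\ge 0$, that $\Re\tilde G\ge0$ on~$\UH_1$; consequently $\tilde G$ is itself an infinitesimal generator in~$\UH_1$ with DW-point at~$\infty$, existence of $(\tilde\phi_t)$ follows by taking the semigroup generated by~$\tilde G$ and verifying \eqref{EQ_phi-phi_tilde} through uniqueness of solutions of $dw/dt=G(w)$, uniqueness of $(\tilde\phi_t)$ follows because a semigroup is determined by its generator, and univalence of $\tilde h_0$ follows from $\tilde h_0'=1/\tilde G$ and the Noshiro--Warschawski theorem. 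You instead build $(\tilde\phi_t)$ topologically, lifting $(t,\tilde z)\mapsto\phi_t(\exp(-\tilde z))$ through the covering $\exp(-\,\cdot\,)$ by contractibility of $[0,+\infty)\times\UH_1$, settle the semigroup law and uniqueness by monodromy (continuous $2\pi i\,\Int$-valued functions on connected sets are constant), locate the DW-point dynamically (no finite fixed point plus escape of trajectories to~$\infty$, with the elliptic-automorphism case rightly split off, since there $\phi_{t}$ may equal $\id_\UD$ for some $t>0$ and the projection argument alone fails), and obtain univalence from covering-space theory. The trade-off: the paper's route gets existence, uniqueness and the position of the DW-point almost for free from the analytic machinery (Berkson--Porta plus ODE uniqueness), while yours avoids Noshiro--Warschawski and explains univalence conceptually --- $\tilde h$ is a biholomorphism of~$\UH_1$ onto the universal cover of $\Omega^*:=h(\UD)\setminus\{0\}$ --- at the price of more monodromy bookkeeping. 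Two assertions in your write-up deserve an explicit line each: first, $G'(0)\neq0$, e.g.\ because $\phi_t'(0)=e^{tG'(0)}=1$ would force $\phi_t=\id_\UD$ by the Schwarz Lemma, contradicting non-triviality; second, the component of $\{w\in\Complex:\exp(-w)\in\Omega^*\}$ containing $\tilde h(\UH_1)$ is simply connected, so that $\exp(-\,\cdot\,)$ restricted to it really is a universal cover of~$\Omega^*$ --- this holds because a loop in~$\Omega^*$ lifts to a closed path exactly when its winding number about~$0$ vanishes, i.e.\ exactly when it is null-homotopic in~$\Omega^*$. Both are easy, so neither is a genuine gap.
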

\begin{proof}
For the proof of the uniqueness and of formulas~\eqref{EQ_phi-phi_tilde_h_G},
we first assume that there exists a one-parameter semigroup~$(\tilde \phi_t)$
satisfying~\eqref{EQ_phi-phi_tilde}. Since $h$ is univalent and $h(0)=0$, we
have $h(\UD^*)\subset\Complex^*$. According to the Monodromy Theorem there
exists a holomorphic lifting $\tilde h:\UH_1\to\Complex$ of $\UH_1\ni\tilde
z\mapsto h\big(\exp(-\tilde z)\big)\in\Complex^*$ w.r.t. the covering map
$\Complex\ni\tilde w\mapsto \exp(-\tilde w)\in\Complex^*$. This means that
\begin{equation}\label{EQ_tilde_h}
\exp\big(-\tilde h(\tilde z)\big)=h\big(\exp(-\tilde z)\big)\quad\text{for all
$\tilde z\in\UH_1$.}
\end{equation}
 The K\oe{}nigs function~$h$ of $(\phi_t)$ satisfies
the Schr\"oder functional equation
\begin{equation}\label{EQ_Schroeder}
h(\phi_t(z))=e^{\lambda t}h(z),\quad\text{for all $t\ge0$ and all $z\in\UD$,}
\end{equation}
 where $\lambda:=G'(0)$.
 Combining the latter two equalities
with~\eqref{EQ_phi-phi_tilde} one easily obtains
$$
\exp\big[-\tilde h\big(\tilde \phi_t(\tilde z)\big)\big]=\exp\big[\lambda
t-\tilde h(\tilde z)\big]\quad\text{for all $\tilde z\in\UH_1$ and all
$t\ge0$.}
$$
Taking into account that for any fixed~$\tilde z\in\UH_1$, $[0,+\infty)\ni
t\mapsto\tilde \phi_t(\tilde z)$ is continuous and equals~$\tilde z$
when~$t=0$, we conclude form the above equality that for all $t\ge0$ the
function $\tilde h_0:=-\tilde h/\lambda$ satisfies
\begin{equation}\label{EQ_for_tilde _h}
\tilde h_0\circ\tilde \phi_t=t+\tilde h_0.
\end{equation}

Differentiating~\eqref{EQ_for_tilde _h} w.r.t. $t$ one obtains $\tilde
h'_0=1/\tilde G$, while from~\eqref{EQ_Schroeder} it follows in a similar way
that $h'/h=\lambda/G$. Now combining these two equalities
and~\eqref{EQ_tilde_h}, we deduce the second formula
in~\eqref{EQ_phi-phi_tilde_h_G}. In particular, this proves the uniqueness of
the one-parameter semigroup~$(\phi_t)$, because it is defined uniquely by its
infinitesimal generator.

Furthermore, according to the Porta\,--\,Berkson formula~\eqref{EQ-BP}, $G$ is
of the form $G(z)=-zp(z)$, where $p\in\Hol(\UD,\Complex)$ with $\Re p(z)\ge0$
for all $z\in\UD$. Therefore,
\begin{equation}\label{EQ_ReGtilde}
\Re \tilde G(\tilde z)\ge0\quad\text{ for all $\tilde z\in\UH_1$.}
\end{equation}
Then, taking into account that $\tilde h'_0=1/\tilde G$ we may conclude with
the help of the Noshiro\,--\,Warschawski Theorem (see, \textsl{e.g.},
\cite[p.\,47]{Duren}) that $\tilde h_0$ is univalent in~$\UH_1$. Therefore,
this function is a K\oe{}nigs function of the semigroup~$(\tilde \phi_t)$. This
proves the first formula in~\eqref{EQ_phi-phi_tilde_h_G}. Besides that, it
follows from~\eqref{EQ_ReGtilde}, again with the help of the Berkson\,--\,Porta
formula, that any one-parameter semigroup $(\tilde \phi_t)$ in the
half-plane~$\UH_1$ satisfying~\eqref{EQ_phi-phi_tilde} must have the DW-point
at~$\infty$.

It remains to prove that such one-parameter semigroup~$(\tilde \phi_t)$ indeed
exists.
%
\REM{ To this end we note that all functions~$\phi_t$ are univalent and, since
$\phi_t(0)=0$ for all $t\ge0$, it follows that $\phi_t(\UD^*)\subset\UD^*$ for
all $t\ge0$. Hence we may associate with the semigroup $(\phi_t)$ a
self-mapping $\Phi$ of the set $\mathcal D:=\UD^*\times[0,+\infty)$ defined by
$\mathcal D\ni(z,t)\mapsto \Phi(z,t):=\big(\phi_t(z),t\big)$. Consider the map
$\pi:\UH_1\times[0,+\infty)=:\tilde{\mathcal D}\to\mathcal D$ given by
$\tilde{\mathcal D}\ni(\tilde z,t)\mapsto \pi(\tilde z,t):=\big(\exp(-\tilde
z),t\big)$. It is easy to see that $\pi$ is a covering map. Therefore, there
exists a lifting $\tilde\Phi:\tilde{\mathcal D}\to\tilde{\mathcal D}$ of the
map $\Phi\circ\pi$ w.r.t.~$\pi$. Clearly, this mapping has the form
$\Phi(\tilde z,t)=\big(\tilde \phi_t(\tilde z), t\big)$, where $(\tilde
\phi_t)$ is a family of holomorphic self-mappings of~$\UH_1$ continuous
w.r.t.~$t\ge0$ and satisfying~\eqref{EQ_phi-phi_tilde}. It remains to show that
\begin{equation}\label{EQ_semigroup-prop}
\tilde \phi_t\circ\tilde \phi_s=\tilde\phi_{s+t}\quad\text{ for all $s,t\ge0$.}
\end{equation}
Combining equality $\phi_t\circ\phi_s=\phi_{s+t}$ with~\eqref{EQ_phi-phi_tilde}
one easily obtains $\exp\big[-(\tilde \phi_t\circ\phi_s)(\tilde
z)\big]=\exp\big[-\tilde\phi_{s+t}(\tilde z)\big]$ for all $\tilde z\in\UH_1$
and all $s,t\ge0$. Note that $s\mapsto \tilde\phi_{s}(\tilde z)$ and $s\mapsto
\tilde\phi_{s+t}(\tilde z)$ are continuous for any~$\tilde z\in\UH_1$ and that
equality~\eqref{EQ_semigroup-prop} holds with $s=0$ for all~$t\ge0$. It follows
that~\eqref{EQ_semigroup-prop} holds also for any $s,t\ge0$. Now the proof is
complete.
\end{proof}
\begin{remark}
Another way to prove the existence of the semigroup~$(\tilde\phi_t)$ is to
notice that $\tilde G=-(G/\id)\circ\big(\tilde z\mapsto e^{-\tilde z}\big)$ is
a generator in~$\UH_1$ and hence there exists a one-parameter
semigroup~$(\tilde\phi_t)$ satisfying~$(d/dt)\tilde\phi_t(\tilde z)=\tilde
G\big(\tilde\phi_t(\tilde z)\big)$ for all $\tilde z\in\UH_1$ and all~$t\ge0$.
Then it is a trivial task to check that~\eqref{EQ_phi-phi_tilde} really holds
for this semigroup generated by~$\tilde G$.
\end{remark}}
As the above argument shows, the fact that $G$ is a generator of a
one-parameter semigroup in~$\UD$ with the DW-point at~$\tau=0$ implies,
according to the Berkson\,--\,Porta formula, that the function $\tilde
G:\UH_1\to\Complex$ defined by the second formula
in~\eqref{EQ_phi-phi_tilde_h_G} is a generator of some one-parameter
semigroup~$(\tilde \phi_t)$ in~$\UH_1$ with the DW-point at~$\infty$. We claim
that this semigroup satisfies~\eqref{EQ_phi-phi_tilde}. Indeed, for any $\tilde
z\in\UH_1$,
\begin{multline*}
\frac{d}{dt}\exp\big(-\tilde \phi_t(\tilde z)\big)=-\exp\big(-\tilde
\phi_t(\tilde z)\big)\tilde G\big(\phi_t(\tilde z)\big)=G\big(\exp\big(-\tilde
\phi_t(\tilde z)\big)\big),\quad t\ge0,\\\exp\big(-\tilde \phi_t(\tilde
z)\big)|_{t=0}=z:=\exp(-\tilde z\big),
\end{multline*}
and therefore, by the uniqueness of the solution to the initial value problem
for ${dw/dt=G(w)}$, we have $\exp\big(-\tilde \phi_t(\tilde z)\big)=\phi_t(z)$
for all $t\ge0$, which proves~\eqref{EQ_phi-phi_tilde}.
\end{proof}

\section{Angular limits of one-parameter semigroups}
\subsection{Statement of results}
First of all we would like to formulate some general statements on the boundary
behaviour of one-parametric semigroups. Possibility to embed a holomorphic
self-map $\varphi$ of~$\UD$ into a one-parameter semigroup is a quite strong
condition. For example, it is well-known that the elements of
one-parameter semigroups are univalent in~$\UD$. Another, less elementary fact
is that these functions  must have angular limits at all points on~$\UC$. This
was proved in~\cite[p.\,479, proof of Theorem~5]{CMP2004} for the case of a
semigroup with the boundary DW-point. Here we prove a bit stronger statement
both for the interior and boundary DW-point.

\begin{theorem}\label{TH_anglims}
Let $(\phi_t)$ be a one-parameter semigroup. Then for any $t\ge0$ and any
$\sigma\in\UC$ there exists the angular limit
$\phi_t(\sigma):=\angle\lim_{z\to\sigma}\phi_t(z)$. Moreover, for each
$\sigma\in\UC$ and each Stolz angle $S$ with vertex at~$\sigma$ the convergence
$\phi_t(z)\to\phi_t(\sigma)$ as $S\ni z\to\sigma$ is locally uniform in
$t\in[0,+\infty)$.
\end{theorem}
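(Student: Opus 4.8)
The plan is to first reduce to the boundary Denjoy--Wolff case and then exploit the K\oe nigs linearization. By Proposition~\ref{PR_lifting} a semigroup with interior DW-point $\tau=0$ is semiconjugate, through the covering $\tilde z\mapsto\exp(-\tilde z)$, to a semigroup $(\tilde\phi_t)$ on $\UH_1$ with DW-point $\infty$, via $\exp(-\tilde\phi_t(\tilde z))=\phi_t(\exp(-\tilde z))$. Near a finite boundary point $\tilde z_0\in i\Real$ this covering is a conformal equivalence onto a neighbourhood of $\sigma=\exp(-\tilde z_0)\in\UC$, carrying Stolz angles to Stolz angles, so the existence and the local-in-$t$ uniformity of angular limits transfer from $(\tilde\phi_t)$ to $(\phi_t)$. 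It therefore suffices to treat the boundary DW-point case, which after a M\"obius change of variables I normalize to $\tau=1$ in $\UD$. Here the K\oe nigs function $h$ is univalent, $\Omega:=h(\UD)$ satisfies $\Omega+t\subseteq\Omega$ for all $t\ge0$, and $\phi_t=h^{-1}\circ(\,\cdot\,+t)\circ h$. The decisive geometric feature is that $\Omega$ is convex in the direction of the real axis: each horizontal slice is a ray $(a(y),+\infty)$, so a positive translation pushes every \emph{finite} boundary point of $\Omega$ strictly into the interior.

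Granting for the moment that $h$ has an angular limit in $\ComplexE=\Complex\cup\{\infty\}$ at every $\sigma\in\UC$ (this is the main obstacle, below), the theorem follows from a clean dichotomy. If $w_0:=\anglim_{z\to\sigma}h(z)$ is finite, then for $z$ in a Stolz angle $S$ at $\sigma$ the values $h(z)+t$ lie, for $t\in[0,T]$, in a small neighbourhood of the compact arc $\{w_0+t:0\le t\le T\}\subset\Omega$; since $h^{-1}$ is uniformly continuous on a compact neighbourhood of this arc, $\phi_t(z)=h^{-1}(h(z)+t)$ converges to $\phi_t(\sigma)=h^{-1}(w_0+t)$ as $S\ni z\to\sigma$, uniformly for $t\in[0,T]$, and this limit is an interior point of $\UD$. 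If instead the limit is $\infty$, then $\sigma$ is a boundary fixed point: the infinite prime ends of $\Omega$, being the translation-invariant ones, are exactly the fixed points (with $\tau$ the $+\infty$ end), so $\phi_t(\sigma)=\sigma$. For a regular such point I would use Julia's Lemma: a Stolz angle at $\sigma$ meets arbitrarily small horocycles $E(\sigma,R)$, and $\phi_t$ maps $E(\sigma,R)$ into $E(\sigma,\phi_t'(\sigma)R)$; since the semigroup forces $\phi_t'(\sigma)=e^{c t}$ with $c=c(\sigma)\ge0$, the dilation stays bounded on $[0,T]$, giving $\phi_t(z)\to\sigma$ uniformly in $t\in[0,T]$. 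At the DW-point $\tau=1$ one even has $\phi_t'(\tau)=e^{-c t}\le1$, so the estimate is uniform for all $t\ge0$.

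The main obstacle is precisely the blanket claim that $h$ has angular limits everywhere, i.e.\ that $h(r\sigma)$ does not oscillate along any radius. This fails for general univalent maps (third-kind prime ends exist), so the special structure must intervene: I would combine the convexity of $\Omega$ in the translation direction with an extended No-Koebe-Arcs argument (as in \cite[Theorem~9.2]{Pommerenke}) to exclude a non-degenerate radial cluster set. Indeed, were the radial cluster set of $h$ at some $\sigma$ a non-degenerate sub-arc $K$ of $\partial\Omega$, then $K+t$ would be a non-degenerate arc interior to $\Omega$ and $\phi_t(r\sigma)=h^{-1}(h(r\sigma)+t)$ would oscillate over the interior arc $h^{-1}(K+t)$; thus the no-oscillation of $h$ is genuinely \emph{equivalent} to the conclusion, and convexity is exactly what rules out such a $K$. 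A secondary delicate point is the \emph{non-regular} boundary fixed points, where Julia's Lemma is unavailable; there I would argue directly that, because $h\to\infty$ through a translation-invariant end, the bounded shift by $t\in[0,T]$ preserves convergence to the prime end $\sigma$, again uniformly in $t$. Finally, the cases assemble into the stated local uniformity because in each the $t$-dependence enters only through the bounded shift $+t$ or through the exponentially controlled dilation $\phi_t'(\sigma)$.
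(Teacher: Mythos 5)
Your reduction to the boundary DW-point case and your Julia's Lemma treatment of regular boundary fixed points are sound, but the proof has a genuine gap: the ``decisive geometric feature'' on which everything else rests is false. Invariance of $\Omega=h(\UD)$ under $w\mapsto w+t$, $t\ge0$, does imply that every horizontal slice of $\Omega$ is empty, a full line, or a ray $(a(y),+\infty)$; it does \emph{not} imply that a positive translation pushes every finite boundary point into $\Omega$. The boundary of $\Omega$ may itself contain horizontal rays invariant under the translations: take the paper's own Example~\ref{EX_contact}, $\Omega=\UH_i\setminus\bigcup_{n\in\Natural}\{x+i/n:\,x\le0\}$. There the point $\sigma_0\in\UC$ corresponding to the prime end with impression $(-\infty,0]$ has finite angular limit $w_0=0=\anglim_{z\to\sigma_0}h(z)$, yet $w_0+t\in\partial\Omega$ for \emph{all} $t\ge0$. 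Consequently your dichotomy collapses in the finite case: the arc $\{w_0+t:0\le t\le T\}$ need not lie in $\Omega$ (it already fails at $t=0$, since $w_0\in\partial\Omega$), there is no compact neighbourhood of it inside $\Omega$ on which $h^{-1}$ could be invoked, and the conclusion ``this limit is an interior point of $\UD$'' is false --- $\sigma_0$ is a contact point and $\phi_t(\sigma_0)\in\UC$. The paper avoids any such case distinction: it shows that $h|_S$ is uniformly continuous from a Stolz angle $S$ into $\Omega$ equipped with the metric $d_\Omega$ of~\eqref{EQ_dist}, that the translations $w\mapsto w+t$, $t\in[0,T]$, are uniformly equicontinuous for $d_\Omega$, and that $h^{-1}$ is uniformly continuous from $(\Omega,d_\Omega)$ into $\UD$ (Proposition~\ref{PR_unif-cont-inverse}, via the No-Koebe-Arcs Theorem and normality of univalent maps), so the composite family $z\mapsto h^{-1}(h(z)+t)$ is uniformly equicontinuous on $S$ and extends equicontinuously to $\closure S$.

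Second, what you yourself identify as the main obstacle --- that $h$ has an angular limit at \emph{every} $\sigma\in\UC$ --- is left unproved, and the sketch you offer cannot be repaired along the lines indicated: it again invokes the false claim that $K+t\subset\Omega$ for a boundary continuum $K$ (false in Example~\ref{EX1}, where the impression of the prime end at $\tau$ is the whole boundary of the strip), and in any case arguing that a non-degenerate radial cluster set would make $\phi_t$ oscillate is circular, since non-oscillation of $\phi_t$ is precisely what the theorem asserts; a contradiction must be derived from established facts, not from the desired conclusion. The paper's proof of this step (Proposition~\ref{PR_Re_h}) is analytic rather than geometric: after mapping to $\UH_1$ by $F_\sigma$, the function $H=h\circ F_\sigma^{-1}$ satisfies $\Re H'\ge0$, so $H'$ admits a Nevanlinna--Herglotz representation (Lemma~\ref{LM_HRe_prime_positive}); integrating that representation and applying dominated convergence shows that $\Im H(x)$ has a finite limit and $\Re H(x)$ is monotone along the radius, i.e.\ $h$ has an asymptotic value at $\sigma$, and normality of univalent functions upgrades the asymptotic value to an angular limit. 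Nothing in your proposal substitutes for this ingredient, and without it (and without a correct argument in the finite-limit case) the proof does not go through.
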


Using the above theorem, we extend elements of the semigroup to the unit
circle~$\UC$. Suppressing the language in the same manner as in the statement
of Theorem~\ref{TH_anglims}, we will denote this extension again by~$\phi_t$.
Of course $\phi_t$'s do not need to be continuous w.r.t.\,$z$ on~$\UC$.
However, we will show that $t\mapsto\phi_t(z)$ is continuous in~$t$ for
any~$z\in\UDc$.

\begin{proposition}\label{PR_cont-in-t}
Let $(\phi_t)$ be a one-parameter semigroup. With~$\phi_t$, $t\ge0$, being
extended to~$\UC$ as in Theorem~\ref{TH_anglims}, the family of functions
$$\Big([0,+\infty)\ni t\mapsto\phi_t(z)\Big)_{z\in\UDc}$$
is uniformly equicontinuous.
\end{proposition}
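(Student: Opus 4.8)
The plan is to prove the single estimate
\[
|\phi_t(z)-\phi_s(z)|\le M(|t-s|)\quad\text{for all } z\in\UDc \text{ and all } s,t\ge0,
\]
where $M(h):=\sup_{w\in\UD}|\phi_h(w)-w|$, and then to show that $M(h)\to0$ as $h\to+0$; since $M(0)=0$ and $M(h)\to0$, the function $M$ then serves as a common modulus of continuity for every trajectory $t\mapsto\phi_t(z)$ and every base time, which is precisely uniform equicontinuity. The reduction rests entirely on the semigroup law. For interior $z\in\UD$ and $t=s+h$ with $h\ge0$ one has $\phi_t(z)=\phi_h(\phi_s(z))$, whence $|\phi_t(z)-\phi_s(z)|=|\phi_h(w)-w|\le M(h)$ with $w:=\phi_s(z)\in\UD$ (and symmetrically for $t<s$). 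The crucial point is that this bound does not see the base time $s$ at all, so uniformity in $s\in[0,+\infty)$ comes for free.

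Passing from $\UD$ to $\UDc$ is then a soft matter, and this is the only place Theorem~\ref{TH_anglims} is used, and only through the \emph{existence} of angular limits. Fix $\sigma\in\UC$, a Stolz angle $S$ at $\sigma$, and a sequence $z_k\in S$ with $z_k\to\sigma$. For each fixed $t$ the definition of the angular limit gives $\phi_t(z_k)\to\phi_t(\sigma)$, so the boundary trajectory $t\mapsto\phi_t(\sigma)$ is the pointwise (in $t$) limit of the interior trajectories $t\mapsto\phi_t(z_k)$. Since each interior trajectory obeys the modulus $M$ established above, passing to the limit yields $|\phi_t(\sigma)-\phi_s(\sigma)|=\lim_k|\phi_t(z_k)-\phi_s(z_k)|\le M(|t-s|)$. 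Thus the same $M$ controls the boundary trajectories, and the whole family indexed by $z\in\UDc$ is uniformly equicontinuous as soon as $M(h)\to0$ is known.

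It remains to prove $M(h)\to0$, which is the analytic heart of the statement and the step I expect to be hardest. By Theorem~\ref{RES_inf-gen} the trajectory $u\mapsto\phi_u(w)$ solves $\frac{d}{du}\phi_u(w)=G(\phi_u(w))$, so $|\phi_h(w)-w|\le\int_0^h|G(\phi_u(w))|\,du$. Writing $G$ in Berkson--Porta form~\eqref{EQ-BP} and using the Herglotz bound $|p(\zeta)|=O\big(1/(1-|\zeta|)\big)$ for functions with $\Re p\ge0$, one gets the uniform estimate $|G(\zeta)|\le C/(1-|\zeta|)$. The integrand therefore blows up only as the trajectory approaches $\UC$, so everything hinges on controlling how close $\phi_u(w)$ comes to the boundary on the short interval $[0,h]$. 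Here one exploits that $G$ points inward along $\UC$: a direct computation from~\eqref{EQ-BP} gives $\Re\big(\overline{\zeta}\,G(\zeta)\big)=-|\tau-\zeta|^2\,\Re p(\zeta)\le0$ for $\zeta\in\UC$. Reparametrising the trajectory by Euclidean arc length, the target bound $M(h)=O(\sqrt h)$ follows once one knows that the distance $1-|\phi_u(w)|$ to the boundary grows at least linearly in the arc length travelled, uniformly in $w$; heuristically this holds because the only regions where $|G|$ is large are neighbourhoods of the repelling boundary fixed points, from which trajectories are pushed inward.

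The main obstacle is exactly this last uniform estimate: the generator may blow up at infinitely many boundary fixed points and along the absolutely continuous part of the Herglotz measure of $p$, and one must rule out, uniformly in the starting point $w$, that a trajectory lingers in a thin boundary collar while moving with large speed. I expect to handle it by first reducing to the case of a boundary Denjoy--Wolff point via Proposition~\ref{PR_lifting}, and then quantifying the inward drift recorded above by the Julia-type horocyclic monotonicity of Remark~\ref{RM_Julia-Lemma}.
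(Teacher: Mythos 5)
Your reduction is sound and in fact coincides with the paper's own starting point for the interior-DW-point case: with $M(h):=\sup_{w\in\UD}|\phi_h(w)-w|$, the semigroup law gives $|\phi_t(z)-\phi_s(z)|=\big|(\phi_{|t-s|}-\id_\UD)\big(\phi_{\min(s,t)}(z)\big)\big|\le M(|t-s|)$ uniformly in the base time, and your passage to $\sigma\in\UC$ by taking limits along a Stolz angle (using Theorem~\ref{TH_anglims} for existence of the limits) is correct. So, as you say, everything hinges on proving $M(h)\to0$ as $h\to+0$ --- and that is precisely the step your proposal does not prove; what you offer is a heuristic whose key premise is false. It is not true that $1-|\phi_u(w)|$ grows, uniformly in $w$, at least linearly in the arc length travelled: for the hyperbolic automorphism semigroup generated by $G(z)=c(1-z^2)$ (DW-point $\tau=1$, repelling fixed point $-1$) the unit circle is invariant, so an orbit starting at $|w|=1-\delta$ travels an arc of length of order $1$ in unit time while $1-|\phi_u(w)|$ remains of order $\delta$; for parabolic automorphisms the orbits are horocycles, along which $1-|\phi_u(w)|$ even \emph{decreases} to $0$. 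Moreover, $|G|$ can be large near boundary points that are not fixed points at all --- wherever the Herglotz function $p$ in~\eqref{EQ-BP} blows up --- and near a tangentially approached boundary DW-point the bound $|G(\zeta)|\le C|1-\zeta|^2/(1-|\zeta|)$ is useless, so the claim that large speed occurs only near repelling fixed points, from which orbits are ``pushed inward'', cannot be salvaged. In short, the analytic heart of the proposition is missing, and the infinitesimal route proposed for it breaks down already on automorphism semigroups.

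It is worth seeing how the paper supplies exactly this missing ingredient by a non-infinitesimal mechanism. After reducing to the boundary DW-point $\tau=1$ (your intended reduction via Proposition~\ref{PR_lifting} is also the paper's, though pushing the estimate back through the non-injective covering $\tilde z\mapsto e^{-\tilde z}$ needs the Lipschitz control on a suitable region that the paper's Case~2 carries out), one uses the Abel equation $h\circ\phi_t=h+t$: the points $h(\phi_s(z))$ and $h(\phi_t(z))$ are joined \emph{inside} $\Omega:=h(\UD)$ by the straight segment $[h(z)+s,\,h(z)+t]$, of diameter at most $|t-s|$, because $\Omega$ is invariant under right translations. Proposition~\ref{PR_unif-cont-inverse} --- a consequence of the No-Koebe-Arcs Theorem, i.e.\ of normality of the univalent map $h$ --- then says that $h^{-1}$ maps connected subsets of $\Omega$ of small diameter to sets of small diameter, uniformly over $\Omega$; hence $|\phi_t(z)-\phi_s(z)|$ is small uniformly in $z\in\UD$ once $|t-s|$ is. This bypasses any pointwise bound on $G$ (unavoidable, since $G$ is generically unbounded near $\UC$) and replaces the false ``uniform inward drift'' by a soft compactness statement about conformal maps; if you want to complete your argument, this is the lemma you need.
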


The proofs are given below.


\subsection{Boundary behaviour of the K\oe{}nigs function. Proof of Proposition~\ref{PR_cont-in-t} and Theorem~\ref{TH_anglims}}\label{SS_PR_cont}
In what follows we will make use of one general statement concerning conformal
mappings of the disk. Denote by $\diam_U E$, where $U\subset\ComplexE$ is a
domain and $E\subset U$, the diameter of a set $E$ w.r.t. the standard
Poincar\'e metric of constant curvature on~$U$. Let $D$ be any domain
of~$\ComplexE$. For $w_1,w_2\in D$ denote
\begin{equation}\label{EQ_dist}
d_D(w_1,w_2):=\inf\big\{\diam_{\ComplexE}\,\Gamma:\,\text{$\Gamma\subset D$ is a Jordan arc joining $w_1$
and~$w_2$}\big\}.
\end{equation}
It is easy to see that $d_D$ is a distance function in~$D$.
\begin{proposition}\label{PR_unif-cont-inverse}
Let $f:\UD\to D$ be a conformal mapping onto a domain~$D\subset\ComplexE$. Then
for any $\varepsilon>0$ there exists $\delta>0$ such that if $K\subset\UD$ is a
pathwise connected set and $\diam_{\ComplexE} K<\delta$, then $\diam_\Complex
f^{-1}(K)<\varepsilon$. In particular, the inverse mapping $f^{-1}:D\to \UD$ is
uniformly continuous w.r.t. the distance~$d_D$ in $D$ and the Euclidean
distance in~$\UD$.
\end{proposition}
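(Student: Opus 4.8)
The plan is to derive the ``in particular'' clause from the main assertion and to prove the main assertion by contradiction. For the reduction, given $\varepsilon>0$ take the $\delta>0$ from the main assertion; if $w_1,w_2\in D$ satisfy $d_D(w_1,w_2)<\delta$, then by \eqref{EQ_dist} there is a Jordan arc $\Gamma\subset D$ joining them with $\diam_{\ComplexE}\Gamma<\delta$, so $\diam_\Complex f^{-1}(\Gamma)<\varepsilon$ and hence $|f^{-1}(w_1)-f^{-1}(w_2)|<\varepsilon$. Thus it suffices to prove the main assertion (where the pathwise connected set is of course taken in the range $D$, so that $f^{-1}(K)$ is defined). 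Suppose it fails: there are $\varepsilon_0>0$ and pathwise connected $K_n\subset D$ with $\diam_{\ComplexE}K_n\to0$ but $\diam_\Complex f^{-1}(K_n)\ge\varepsilon_0$. Since $f$ is a homeomorphism of $\UD$ onto $D$, each $f^{-1}(K_n)$ is pathwise connected, so I pick an arc $\gamma_n\subset f^{-1}(K_n)$ joining points $a_n,b_n$ with $|a_n-b_n|\ge\varepsilon_0/2$; then $\diam_{\ComplexE}f(\gamma_n)\le\diam_{\ComplexE}K_n\to0$. Passing to subsequences, $a_n\to a$, $b_n\to b$ in $\oD$ with $|a-b|\ge\varepsilon_0/2$, and $f(a_n)\to w_*\in\ComplexE$; since $\diam_{\ComplexE}K_n\to0$ and $f(b_n)\in K_n$, also $f(b_n)\to w_*$.

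I then split into two cases. If one of $a,b$ lies in $\UD$, say $a\in\UD$, then $w_*=\lim f(a_n)=f(a)\in D$, so from $f(b_n)\to w_*\in D$ and continuity of $f^{-1}$ we get $b_n\to f^{-1}(w_*)=a$, contradicting $|a-b|\ge\varepsilon_0/2$. Hence $a,b\in\UC$ are distinct and $w_*\in\partial D$: two arcs reaching two distinct boundary points of $\UD$ while their images shrink to a single boundary point of $D$. This is exactly the configuration forbidden by the No-Koebe-Arcs phenomenon, and it is here that the real work (and the main obstacle) lies, since neither injectivity nor pointwise continuity of $f^{-1}$ is available.

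To rule it out I would use a length--area estimate for $g:=f^{-1}\colon D\to\UD$, whose image has finite area $\pi$. After a rotation of the sphere I may assume $w_*\in\Complex$. For $t>0$ let $\beta(t)$ be the Euclidean area of $g\big(\{\,0<|w-w_*|<2t\,\}\cap D\big)$; as $w_*\notin D$, the sets $\{|w-w_*|<2t\}\cap D$ decrease to $\varnothing$, so $\beta(t)\to0$. Cauchy--Schwarz on the circles $\{|w-w_*|=s\}$ gives
\[
\int_t^{2t}\frac{L(s)^2}{2\pi s}\,ds\le\iint_{\{t<|w-w_*|<2t\}\cap D}|g'|^2\,dA=\beta(t),\quad\text{where } L(s):=\int_{\{|w-w_*|=s\}\cap D}|g'(w)|\,|dw|,
\]
so some radius $s^*\in(t,2t)$ satisfies $L(s^*)^2\le 4\pi\beta(t)\to0$. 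Hence the relative boundary $\Sigma_t:=g(\{|w-w_*|=s^*\}\cap D)=\partial W\cap\UD$ of $W:=\{z\in\UD:|f(z)-w_*|<s^*\}$ is a disjoint family of crosscuts (and possibly loops) of total Euclidean length tending to $0$.

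Finally I extract the contradiction. For $n$ large one has $f(\gamma_n)\subset\{|w-w_*|<s^*\}$, so $\gamma_n\subset W$ and $\gamma_n\cap\Sigma_t=\varnothing$; let $W_n$ be the component of $W$ containing $\gamma_n$, so $\diam_\Complex W_n\ge\varepsilon_0/2$. A disjoint family of crosscuts of vanishing total length cuts $\UD$ into one ``large'' region together with pieces of diameter (and of total area) tending to $0$; as $W_n$ is large it must be that region, and therefore contains $\{|z|\le 1/2\}$ up to a set of area $\to0$, in particular two points of $\{|z|\le 1/2\}$ at distance $\ge 1/4$. But $W_n\subset W$ forces the $f$-images of these two points to lie in a ball of diameter $2s^*\to0$, whereas $f$ is injective and continuous on the compact set $\{|z|\le1/2\}$, so the images of any two of its points at distance $\ge1/4$ are a fixed positive distance apart. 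This is impossible for $t$ small, proving the proposition. The decisive point is precisely this last step: trading the vanishing of the images $f(\gamma_n)$ for the vanishing of a separating curve in $\UD$ by means of the length--area inequality.
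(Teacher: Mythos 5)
Your strategy is sound and genuinely different from the paper's: the paper disposes of this proposition in one line, by quoting the Bagemihl--Seidel version of the No-Koebe-Arcs Theorem \cite[Corollary~9.1]{Pommerenke} together with the fact that univalent functions are normal \cite[Lemma~9.3]{Pommerenke}. Your reduction to the configuration of arcs $\gamma_n$ whose endpoints tend to distinct points $a,b\in\UC$ while the images $f(\gamma_n)$ shrink to a single point $w_*\in\partial D$ is exactly the configuration that citation forbids, and your length--area estimate (finite area of $g(D)=\UD$, hence a radius $s^*\in(t,2t)$ whose preimage under $f$ has length $L(s^*)\le\sqrt{4\pi\beta(t)}\to0$) is the correct self-contained replacement for normality. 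Up to that point the argument is complete and correct, and it is an honest re-proof of the cited theorem rather than an appeal to it.

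The gap is the final topological step, which you yourself flag as decisive: the assertion that a disjoint family of crosscuts of vanishing total length cuts $\UD$ into one ``large'' region together with pieces of small diameter, so that $W_n$ must contain most of $\{|z|\le1/2\}$. This is asserted, not proved, and it is not obvious: $\Sigma_t$ consists of \emph{countably many} crosscuts and loops, which may accumulate inside $\UD$ and whose cut-off regions may be nested, so identifying ``the'' large component and showing it exhausts the half-disk requires a genuine separation argument (Janiszewski-type theory: a closed set separating two points of the sphere contains a \emph{component} separating them). That is precisely where the content of the No-Koebe-Arcs Theorem lies, so as written the crux has been postulated rather than established. Fortunately you can close the gap using only ingredients you already computed, bypassing any component counting. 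Prove the elementary lemma: if $V\subset\UD$ is open and connected with $\diam_\Complex V\ge d$ and the relative boundary $\partial V\cap\UD$ has length at most $\lambda\le d/8$, then the area of $V$ is at least $cd^2$. Indeed, take $p,q\in V$ with $|p-q|\ge d/2$ and consider the lines $M_s$ orthogonal to $q-p$ passing between them: each $M_s$ meets $V$ (connectedness), and the component of $V\cap M_s$ through such a point is an interval whose endpoints lie in $(\partial V\cap\UD)\cup\UC$; the set of parameters $s$ for which an endpoint lies in $\partial V\cap\UD$ has measure at most $\lambda$ (orthogonal projection does not increase length), while for every other $s$ the \emph{full chord} $M_s\cap\UD$ lies in $V$, and integrating these chord lengths gives the area bound. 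Applying this to $V=W_n$, whose relative boundary lies in $\Sigma_t$ and hence has length at most $L(s^*)$, yields $\mathrm{area}(W_n)\ge c\,\varepsilon_0^2/4$, contradicting
\begin{equation*}
\mathrm{area}(W_n)\le\mathrm{area}(W)=\mathrm{area}\Bigl(g\bigl(\{|w-w_*|<s^*\}\cap D\bigr)\Bigr)\le\beta(t)\longrightarrow0 ,
\end{equation*}
a contradiction that was available to you all along and that makes the injectivity-on-compacts endgame, and the unproved ``one large region'' claim it rests on, unnecessary.
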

This proposition follows easily from Bagemihl\,--\,Seidel's version of the
No-Koebe-Arcs Theorem (see,~\textsl{e.g.},~\cite[Corollary~9.1
on~p.\,267]{Pommerenke}), taken into account that any conformal map is a normal
function (see,~\textsl{e.g.},~\cite[Lemma~9.3 on~p.\,262]{Pommerenke}).

Now we turn to the proof of Theorem~\ref{TH_anglims}. To this end we have to
study first the boundary behaviour of the K\oe{}nigs function.

It is known, see, \textsl{e.g.}, \cite[\S3.6]{Pommerenke-II}, that starlike
functions have angular limits\footnote{Note that for univalent functions, and
more generally, for all normal functions in~$\UD$, the angular limit at a given
point on~$\UC=\partial\UD$ exists if and only if the radial limit at this point
exists, see \textsl{e.g.} \cite[\S9.1, Lemma~9.3, Theorem~9.3]{Pommerenke}.},
finite or infinite, at every point on the unit circle. This is also the case
for the more general class of spiral-like
functions~\cite[Theorem~3.2]{KimSugawa}, which serve as K\oe{}nigs functions of
one-parameter semigroups with the interior DW-point. The proof of the analogous
result for K\oe{}nigs functions of one-parameter semigroups with the boundary
DW-point is very similar. We demonstrate here this proof only in order to make
the exposition more self-contained.
\begin{proposition}\label{PR_Re_h}
Let $(\phi_t)$ be a one-parameter semigroup in~$\UD$ with the DW-point $\tau=1$
and $h$ its K\oe{}nigs function. Then
$$\forall\sigma\in\UC\quad\exists~\angle\lim_{z\to\sigma}h(z)\in\ComplexE.$$ Moreover, if
$\sigma\in\UC\setminus\{\tau=1\}$, then
\begin{equation}\label{EQ_Re-h-to_-infty}
\limsup_{z\to\sigma} \Re h(z)<+\infty.
\end{equation}
\end{proposition}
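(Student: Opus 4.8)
The starting point is the geometry of $\Omega:=h(\UD)$. The Abel equation $h\circ\phi_t=h+t$ together with $\phi_t(\UD)\subset\UD$ gives $\Omega+t\subset\Omega$ for every $t\ge0$. Hence each horizontal line meets $\Omega$ in a single rightward ray, so that $\Omega=\{w:\Re w>a(\Im w)\}$ is the region lying to the right of the graph of some function $a:Y\to[-\infty,+\infty)$ over an interval $Y\subset\R$. Equivalently, $F:=\exp(-h)$ maps $\UD$ into the punctured plane with image $\exp(-\Omega)$ starlike with respect to $0$, since $s\,\exp(-\Omega)\subset\exp(-\Omega)$ for all $s\in(0,1]$; this is precisely why the argument runs parallel to the spiral-like case treated in \cite[Theorem~3.2]{KimSugawa}. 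Under this correspondence the forward trajectories $t\mapsto\phi_t(z)$, along which $\Re h$ is strictly increasing (indeed $\Re h(\phi_t(z))=\Re h(z)+t$), are the preimages of the horizontal right-rays in $\Omega$, that is, of the radial segments of the starlike domain $\exp(-\Omega)$.

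For the existence of the angular limits I would first reduce to radial limits: $h$, being conformal, is a normal function, so by the lemma quoted in the footnote the angular limit at $\sigma$ exists in $\ComplexE$ if and only if the radial limit does. To produce the radial limit at a given $\sigma\in\UC$ I would adapt the starlike argument, using the monotonicity of $\Re h$ along the trajectories together with Proposition~\ref{PR_unif-cont-inverse}: if $w_1,w_2\in\Omega$ can be joined in $\Omega$ by a Jordan arc of small Euclidean diameter, then $h^{-1}(w_1)$ and $h^{-1}(w_2)$ are close. The plan is to show that the radial cluster set of $h$ at $\sigma$ cannot contain two distinct values of $\ComplexE$: the graph structure of $\Omega$ forces the image of the radius either to be eventually trapped in a bounded portion of $\Omega$, where $d_\Omega$ and the Euclidean distance are comparable and the preimage argument pins down a unique finite limit, or to escape to the single end of $\Omega$ at $+\infty$, giving the limit $\infty$. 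I expect this step, controlling the cluster set by means of $d_\Omega$, to be the main technical obstacle.

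Finally, for the upper bound \eqref{EQ_Re-h-to_-infty} at $\sigma\ne\tau=1$ I would argue by contradiction. Suppose $z_n\to\sigma$ with $\Re h(z_n)\to+\infty$, equivalently $F(z_n)=\exp(-h(z_n))\to0$. In the starlike model the only boundary behaviour producing $F\to0$ is approach to the single accessible end of $\Omega$ at $+\infty$; and this end is reached along every forward trajectory, which by Remark~\ref{RM_cont-DW-theorem} converges to the Denjoy--Wolff point $\tau=1$. Using Proposition~\ref{PR_unif-cont-inverse} to compare $z_n$ with the points of a fixed forward trajectory whose $h$-images share the same real part, one connects the two inside $\Omega$ by arcs of controlled $d_\Omega$-size and concludes $z_n\to1$, contradicting $\sigma\ne1$. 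This yields $\limsup_{z\to\sigma}\Re h(z)<+\infty$ and completes the proof.
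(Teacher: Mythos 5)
The heart of this proposition is the existence of the radial (hence, by normality, angular) limit of $h$ at \emph{every} $\sigma\in\UC$, and for that claim your proposal contains a plan rather than a proof: you yourself flag the decisive step --- that the radial cluster set cannot contain two distinct values --- as the main technical obstacle, and the dichotomy you offer in its place is both unjustified and false as stated. The image $h([0,\sigma))$ need not be ``eventually trapped in a bounded portion of $\Omega$ or escape to the single end of $\Omega$ at $+\infty$'': first, $\Omega$ may have many ends at infinity (in comb-like domains such as the one in Example~\ref{EX1} there are infinitely many prime ends whose impression contains $\infty$), and indeed at every repelling boundary fixed point the radius image tends to $\infty$ with $\Re h\to-\infty$, not through the right-hand end; second, and more seriously, nothing in your argument excludes \emph{oscillation} --- the radius image visiting a bounded region and a far-away region infinitely often, or sliding back and forth along a boundary slit --- which is exactly the behaviour that destroys radial limits for general conformal maps (comb domains again). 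Translation invariance of $\Omega$ alone does not rule this out, and no mechanism in your sketch does. Nor can the work be outsourced to starlike/spiral-like theory: $F=\exp(-h)$ is in general \emph{not univalent} (only when $\Omega$ lies in a horizontal strip of height at most $2\pi$), so \cite{KimSugawa} does not apply to it; moreover the classical proofs of radial convergence for starlike functions themselves run through a Herglotz-type integral representation, not through the geometry alone.

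That representation is precisely what the paper supplies and what your sketch lacks: it transports everything to the half-plane via $F_\sigma$ (Remark~\ref{RM_H}), so that $H=h\circ F_\sigma^{-1}$ satisfies $\Re H'>0$, writes the Nevanlinna representation for $H'$ and integrates it (Lemma~\ref{LM_HRe_prime_positive}), then gets the finite limit of $\Im H(x)$ as $x\to0+$ by dominated convergence, while $\Re H(x)$ converges by monotonicity; normality upgrades this asymptotic value to an angular limit. By contrast, your treatment of the second assertion, \eqref{EQ_Re-h-to_-infty}, is essentially correct and genuinely different from the paper's (which simply uses $\Re H(x+iy)<\Re H(1+iy)$ and passes to the upper limit as $z\to0$): comparing $z_n$ with points far out on a fixed trajectory does work, provided you repair one detail --- you cannot in general join $w_n=h(z_n)$ to the trajectory point with the same real part by a short arc, since a vertical segment may leave $\Omega$; instead take any arc $\gamma_n\subset\Omega$ from $w_n$ to $h(0)$ and use the path $[w_n,w_n+T_n]\cup(\gamma_n+T_n)$ with $T_n$ large, whose spherical diameter is small because $\Re w_n\to+\infty$, and then Proposition~\ref{PR_unif-cont-inverse} together with $\phi_{T_n}(0)\to\tau$ (Remark~\ref{RM_cont-DW-theorem}) forces $z_n\to\tau$. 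So the limsup bound stands on its own, but the main existence statement has a genuine gap.
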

Before proving the above proposition let us make some comments.
\begin{remark}\label{RM_h-to-infty=fixed}
Using the correspondence between the boundary accessible points of $\Omega$ and the
unit circle induced by $h$ (see, \textsl{e.g.}, \cite[Theorem~1 in Chapter~2,
\S3]{Goluzin}) it easy to see that $h(r\sigma)\to\infty$ as $r\to1-0$,
where~$\sigma\in\UC$, if and only if~$\sigma$ is a boundary fixed point
of~$(\phi_t)$, which in principle \textit{can coincide} with the DW-point. We
are able prove a bit more:
\end{remark}

\begin{proposition}\label{Pr_repelling_iff}
Let $(\phi_t)$ be a one-parameter semigroup in~$\UD$ with the DW-point
$\tau\in\UC$ and $h$ its K\oe{}nigs function. Let~$\sigma\in\UC$. Then the
unrestricted limit $\lim_{\UD\ni z\to\sigma}\Im h(z)$ exists finitely if and
only if $\sigma$ is not a boundary regular fixed point of~$(\phi_t)$.

Moreover, the following statements are equivalent:
\begin{itemize}
\item[(i)] $\sigma$ is a boundary fixed point of~$(\phi_t)$ other than its
DW-point;
\item[(ii)] $\Re h(r\sigma)\to-\infty$ as $r\to1-0$;
\item[(iii)] $\lim_{\UD\ni z\to\sigma} \Re h(z)=-\infty$.
\end{itemize}
\end{proposition}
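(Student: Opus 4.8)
Throughout write $\Omega:=h(\UD)$. The Abel equation $h\circ\phi_t=h+t$ says precisely that $\Omega+t\subset\Omega$ for every $t\ge0$, so $\Omega$ is invariant under translation to the right; the Denjoy--Wolff point $\tau$ is the end of $\Omega$ reached by letting $\Re w\to+\infty$ (forward orbits), and the content of the proposition is that every \emph{other} boundary fixed point corresponds to an end of $\Omega$ escaping to $\Re w\to-\infty$, while non-fixed points correspond to finite boundary points. This geometric picture, together with Proposition~\ref{PR_Re_h} and the No-Koebe-Arcs estimate of Proposition~\ref{PR_unif-cont-inverse}, drives the whole proof. I would first establish the chain (iii)$\Rightarrow$(ii)$\Rightarrow$(i)$\Rightarrow$(iii), and then treat the statement about $\Im h$.

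The implication (iii)$\Rightarrow$(ii) is trivial. For (ii)$\Rightarrow$(i), if $\Re h(r\sigma)\to-\infty$ then $h(r\sigma)\to\infty$, so Remark~\ref{RM_h-to-infty=fixed} gives that $\sigma$ is a boundary fixed point; it cannot equal $\tau$, since along every forward orbit $\Re h(\phi_s(z))=\Re h(z)+s\to+\infty$ with $\phi_s(z)\to\tau$ by Remark~\ref{RM_cont-DW-theorem}, which forces $\Re h(z)\to+\infty$ as $z\to\tau$. The heart of the matter is (i)$\Rightarrow$(iii). Here $h$ has radial limit $\infty$ at $\sigma$ by Remark~\ref{RM_h-to-infty=fixed}, and $\Re h<M$ near $\sigma$ for some $M$ by Proposition~\ref{PR_Re_h}; thus the prime end of $\Omega$ attached to $\sigma$ has principal point $\infty$ with bounded real part from the right. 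The key point is that a prime end fixed by \emph{all} the right-translations $w\mapsto w+t$ and having $\Re$ bounded above cannot be a ``vertical'' end with $\Im h\to\pm\infty$: translating such an end arbitrarily far to the right would carry it to a non-equivalent prime end, so it could not be fixed. Hence the only translation-fixed ends at infinity are the right end ($\tau$) and the left ends; since $\sigma\neq\tau$, it must be a left end, i.e. $\Re h\to-\infty$. I would make ``left end'' quantitative by arguing by contradiction: if $z_n\to\sigma$ with $-C\le\Re h(z_n)<M$, put $t_n:=M-\Re h(z_n)\in(0,M+C]$ and $\zeta_n:=\phi_{t_n}(z_n)$; right-invariance gives $\Re h(\zeta_n)=M$, while Theorem~\ref{TH_anglims} and compactness of $\{t_n\}$ give $\zeta_n\to\sigma$, so $h$ would have a bounded-real-part accessible value along a sequence tending to $\sigma$, which—fed through Proposition~\ref{PR_unif-cont-inverse} and the fact that the angular limit of $h$ at $\sigma$ is $\infty$—contradicts $h(r\sigma)\to\infty$.

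For the statement on $\Im h$ I would use the orbit structure. If $\sigma$ is a boundary regular fixed point, then $\Im h$ cannot have a finite unrestricted limit: $\Im h$ is constant along each orbit, since $h$ changes by a real amount. When $\sigma=\tau$, all forward orbits converge to $\tau$ by Remark~\ref{RM_cont-DW-theorem}, and choosing two initial points with distinct values of $\Im h$ yields two sequences tending to $\tau$ with different limits of $\Im h$. When $\sigma\neq\tau$ is regular and repelling, one applies the same reasoning to the backward orbits, which converge to $\sigma$ precisely because $\sigma$ is a regular repelling fixed point (a classical fact), obtaining a one-parameter family of approaches to $\sigma$ along which $\Im h$ takes distinct constant values. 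Conversely, if $\sigma$ is not a boundary regular fixed point, then either $\sigma$ is not a fixed point—so $h$ has a finite angular limit, and right-invariance together with Proposition~\ref{PR_unif-cont-inverse} upgrades this to a finite unrestricted limit of $\Im h$—or $\sigma$ is a non-regular fixed point, where part (iii) gives $\Re h\to-\infty$ while the Julia condition of Remark~\ref{RM_Julia-Lemma} fails; the failed dilation bound means the corresponding left end of $\Omega$ is a channel of vanishing height, and transporting this through Proposition~\ref{PR_unif-cont-inverse} shows $\Im h$ pinches to a single finite value.

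The main obstacle is exactly the interplay between boundary regularity and the ``width'' of the left-escaping end: turning ``regular $\Leftrightarrow$ an end of positive height along which $\Im h$ sweeps an interval'' and ``non-regular $\Leftrightarrow$ a pinching channel along which $\Im h$ converges'' into rigorous statements. Concretely, I expect the two hard points to be (a) the upgrade from radial to unrestricted divergence of $\Re h$ in (i)$\Rightarrow$(iii), and (b) the quantitative link between the Julia--Wolff--Carath\'eodory dilation $\alpha_{\phi_t}(\sigma)$ and the oscillation of $\Im h$; both rest on the No-Koebe-Arcs Theorem entering through Proposition~\ref{PR_unif-cont-inverse}.
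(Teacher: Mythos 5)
Your reduction (iii)$\Rightarrow$(ii)$\Rightarrow$(i) and your treatment of $\Im h$ at regular fixed points (forward orbits for $\tau$, backward orbits in a strip for repelling points) do follow the paper's lines, but the two steps you yourself call the heart of the matter contain genuine gaps. In (i)$\Rightarrow$(iii) your quantitative argument is both circular and inconclusive. To get $\zeta_n=\phi_{t_n}(z_n)\to\sigma$ you invoke Theorem~\ref{TH_anglims}, but that theorem gives uniformity only inside Stolz angles, whereas your $z_n\to\sigma$ unrestrictedly; what you actually need is equicontinuity of $(\phi_t)_{t\in[0,T]}$ at $\sigma$, which in the paper is a consequence (via Corollary~\ref{C_h-cont-at-rep} and Proposition~\ref{PR_if_h_cont_then_phi_cont}) of precisely the statement being proved here. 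Moreover, even granting $\zeta_n\to\sigma$ with $\Re h(\zeta_n)=M$, no contradiction follows: the cluster set of a univalent map at a boundary point may perfectly well contain finite points together with $\infty$ --- this is exactly what happens at the Denjoy--Wolff point in Example~\ref{EX1} --- so the coexistence of the radial limit $\infty$ with a sequence of values of bounded real part is not absurd in itself; excluding it at repelling fixed points \emph{is} the content of the implication. The paper does this by an explicit null-chain of vertical cross-cuts around the leftward-going graph $h([0,\sigma))$ (Lemma~\ref{LM_cross-cut-mimus-infty}, fed by the radial monotonicity of $\Re h(r\sigma)$ and the finiteness of $\lim_{r\to1-0}\Im h(r\sigma)$ from the proof of Proposition~\ref{PR_Re_h}), concluding that the prime end at $\sigma$ is trivial with impression $\{\infty\}$ and $\sup_{w\in D_n}\Re w\to-\infty$. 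Your paragraph about ``translation-fixed prime ends'' gestures at this picture but constructs nothing that could replace that lemma.

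Two further steps fail as written. In (ii)$\Rightarrow$(i) you exclude $\sigma=\tau$ by claiming that $\Re h\to+\infty$ along forward orbits ``forces'' $\Re h(z)\to+\infty$ as $z\to\tau$; it does not: behaviour along orbits controls nothing about the radial behaviour of $\Re h$, since normality identifies asymptotic values only on the sphere, where $\Re h\to+\infty$ and $\Re h\to-\infty$ give the same value $\infty$. The correct argument, as in the paper, is monotonicity: in the half-plane picture of Remark~\ref{RM_H} one has $\Re H'>0$, so $\Re h$ is increasing along the radius $[0,\tau)$ and cannot tend to $-\infty$. Finally, for non-fixed points $\sigma$ you assert that the finite angular limit of $h$ ``upgrades'' to a finite unrestricted limit of $\Im h$ via right-invariance and Proposition~\ref{PR_unif-cont-inverse}; but the impression of the prime end at such a $\sigma$ may contain $\infty$ (indeed a whole horizontal ray), so $h$ itself need not have an unrestricted limit there, and this is the hard case: the paper needs Proposition~\ref{PR_impressions}, Lemma~\ref{LM_prime-end-nonDW}, and a circular-arc null-chain from the theory of cluster sets to settle it. Likewise, at non-regular fixed points the passage from failure of regularity to a ``channel of vanishing height'' is not automatic; it is exactly the cited result \cite[Theorem~2.5]{AnaFlows}, which must be invoked, as the paper does in its Step~3.
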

\noindent We do not use Proposition~\ref{Pr_repelling_iff} in this section. Its proof will be given in section~\ref{SS_Koenigs}.

\begin{remark}\label{RM_H}
Let $(\phi_t)$ be a one-parameter semigroup in~$\UD$ with the DW-point $\tau=1$ and let $\sigma\in\UC\setminus\{\tau=1\}$.
The function
$F_\sigma(z):=(1+z)/(1-z)-(1+\sigma)/(1-\sigma)$ is the conformal mapping
of~$\UD$ onto~$\UH_1$ sending the DW-point~$\tau$ and the point~$\sigma$
to~$\infty$ and to the origin, respectively. The family~$(\Phi_t)_{t\ge0}$
defined by $\Phi_t:=F_\sigma\circ\phi_t\circ F_\sigma^{-1}$ for all~$t>0$ is a
one-parameter semigroup in~$\UH_1$ with the DW-point at~$\infty$. Its K\oe{}nigs
function $H:=h\circ F_\sigma^{-1}$ has the property that $\Re H'(z)\ge0$ for
all $z\in\UH_1$. In fact, it is easy to see that $\Re H'(z)>0$ for all
$z\in\UH$ unless $h$ is linear-fractional mapping and all $\phi_t$'s are
automorphisms of~$\UD$.
\end{remark}
Let us now prove one auxiliary statement.
\begin{lemma}\label{LM_HRe_prime_positive}
Let $H\in\Hol(\UH_1,\Complex)$ and $\Re H'(z)>0$ for all $z\in\UH_1$. Then
there exist constants $\alpha\in\Real$, $\beta\ge0$, and a bounded positive
Borel measure~$\mu$ on~$\Real$ such that for all~$z\in\UD$,
\begin{align}\label{EQ_ReprHprime}
&H'(z)=i\alpha+\beta z+\int_\Real\frac{1+itz}{z+it}\,
d\mu(t),\\\label{EQ_ReprH}
&H(z)=H(1)+i\alpha(z-1)+\frac{\beta}{2}(z^2-1)+\int_\Real\left[
it(z-1)+(1+t^2)\log\frac{z+it}{1+it}\right]d\mu(t),
\end{align}
where the branch of the logarithm in the second formula is chosen so that it
vanishes at~$z=1$.
\end{lemma}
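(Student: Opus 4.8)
The plan is to reduce the statement to the classical Herglotz--Riesz representation of holomorphic functions with nonnegative real part on the unit disk, transport that representation to the half-plane~$\UH_1$ by means of the Cayley transform, and finally integrate to recover~$H$. First I would apply the Cayley transform $C(\zeta):=(1+\zeta)/(1-\zeta)$, which maps $\UD$ conformally onto~$\UH_1$ with $C(0)=1$ and $C(1)=\infty$. Setting $p:=H'\circ C$, the hypothesis $\Re H'>0$ on~$\UH_1$ becomes $\Re p>0$ on~$\UD$, so by the Herglotz--Riesz theorem there exist a finite positive Borel measure~$\nu$ on~$\UC$ and a real constant $\alpha:=\Im p(0)=\Im H'(1)$ with
\begin{equation*}
p(\zeta)=i\alpha+\int_{\UC}\frac{\eta+\zeta}{\eta-\zeta}\,d\nu(\eta),\qquad\zeta\in\UD.
\end{equation*}

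Next I would single out the point $\eta=1=C^{-1}(\infty)$, writing $\nu=\beta\,\delta_1+\nu_0$ with $\beta:=\nu(\{1\})\ge0$ and $\nu_0:=\nu|_{\UC\setminus\{1\}}$. Since $(1+\zeta)/(1-\zeta)=C(\zeta)$, the atom at~$1$ contributes precisely $\beta\,C(\zeta)=\beta z$, which is the desired term with $\beta\ge0$. For the rest I would parametrize $\UC\setminus\{1\}$ by $t\in\Real$ via $\eta=(it-1)/(it+1)$, so that $it$ is the corresponding boundary value of $z=C(\zeta)$, and compute, with $\zeta=C^{-1}(z)=(z-1)/(z+1)$, that the Herglotz kernel transforms as $\frac{\eta+\zeta}{\eta-\zeta}=\frac{1-itz}{z-it}$; after the reflection $t\mapsto-t$ this becomes $\frac{1+itz}{z+it}$, whose real part equals $\Re z\,(1+t^2)/|z+it|^2>0$. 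Taking $\mu$ to be the push-forward of~$\nu_0$ under $\eta\mapsto-t$ gives a \emph{finite} measure, since $\mu(\Real)=\nu_0(\UC)\le\nu(\UC)<\infty$, and this yields~\eqref{EQ_ReprHprime}.

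Finally, to get~\eqref{EQ_ReprH} I would integrate $H'$ along the segment from~$1$ to~$z$. For the integral term I would use the identity $\frac{1+itz}{z+it}=it+\frac{1+t^2}{z+it}$, whose antiderivative vanishing at $z=1$ is $it(z-1)+(1+t^2)\log\frac{z+it}{1+it}$; the principal branch of the logarithm is single-valued here because for $z\in\UH_1$ and $t\in\Real$ both $z+it$ and $1+it$ lie in the right half-plane, so their quotient avoids $(-\infty,0]$, and it vanishes at~$z=1$. The terms $i\alpha$ and $\beta z$ integrate to $i\alpha(z-1)$ and $\frac{\beta}{2}(z^2-1)$. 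Interchanging the $t$-integration with the integration in~$z$ is justified by Fubini, using that $\mu$ is finite and that the combined integrand $it(z-1)+(1+t^2)\log\frac{z+it}{1+it}$ stays bounded in~$t$ for fixed~$z$: its apparent linear growth, coming from $(1+t^2)\log\frac{z+it}{1+it}=(z-1)(1-it)+O(1)$, is cancelled by the accompanying term $it(z-1)$.

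The step I expect to be the main obstacle is the bookkeeping in the second paragraph: transforming the Herglotz kernel correctly under the Cayley map and, most importantly, recognizing that the mass of~$\nu$ concentrated at the single boundary point $\eta=1$ (the preimage of~$\infty$) is exactly what produces the polynomial term~$\beta z$, while the finiteness of the total Herglotz mass~$\nu(\UC)$ is precisely what forces the measure~$\mu$ to be bounded rather than merely Poisson-integrable as in the usual Nevanlinna representation.
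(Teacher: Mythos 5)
Your argument is correct in substance, but it reaches \eqref{EQ_ReprHprime} by a genuinely different route than the paper. The paper handles the first half in one stroke: it observes that $p(\zeta):=iH'(-i\zeta)$ is a holomorphic self-map of the upper half-plane $\UH_i$ and invokes the classical Nevanlinna representation $p(\zeta)=\alpha+\beta\zeta+\int_\Real\frac{1+t\zeta}{t-\zeta}\,d\mu(t)$ with $\mu$ a bounded positive measure (citing Akhiezer--Glazman), which becomes \eqref{EQ_ReprHprime} after rotating back and replacing $\alpha$ by $-\alpha$. You instead re-prove that representation from scratch: Cayley transform to the disk, Herglotz--Riesz there, splitting off the atom $\beta=\nu(\{1\})$ at the preimage of $\infty$ to produce the linear term $\beta z$, and pushing the remaining mass forward to $\Real$. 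Your kernel computation $\frac{\eta+\zeta}{\eta-\zeta}=\frac{1-itz}{z-it}$ is correct, as is the identification of the finiteness of the Herglotz mass with the boundedness of $\mu$ --- which is indeed the substantive content of the lemma. What the paper's route buys is brevity (a citation replaces your whole second paragraph); what yours buys is self-containedness, since the half-plane representation with bounded measure is exactly what you reconstruct.

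The integration step needs one repair, though it is minor. To exchange $\int_\Real d\mu(t)$ with integration over the segment $[1,z]$ via Fubini you must dominate the integrand of \eqref{EQ_ReprHprime}, namely $A(w,t)=\frac{1+itw}{w+it}$ for $w\in[1,z]$, uniformly in $t$; boundedness of the antiderivative $it(z-1)+(1+t^2)\log\frac{z+it}{1+it}$ --- that is, of the inner integral on one side of the identity you are trying to prove --- does not by itself license the interchange, since Fubini requires absolute integrability of the double integrand. Fortunately the cancellation you spotted occurs already before integrating: from the same identity $A(w,t)=it+\frac{1+t^2}{w+it}$ one gets $A(w,t)=w+O(1/t)$ as $t\to\pm\infty$, and in fact $|A(w,t)|\le(1+2|w|^2)/\Re w$ for all $t\in\Real$ (this is the explicit bound the paper verifies by splitting into the cases $|t|\le2|w|$ and $|t|\ge2|w|$). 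Since $[1,z]$ is a compact subset of $\UH_1$ and $\mu$ is finite, this makes the double integral absolutely convergent and the interchange legitimate.
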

\begin{proof}
The function $p(\zeta):=iH'(-i \zeta)$ is a holomorphic self-mapping
of~$\UH_i$. Therefore, $p$ admits the Nevanlinna representation (see,
\textsl{e.g.}, \cite[Vol.II,\,p.\,7]{AkhGlaz})
$$
p(\zeta)=\alpha+\beta
\zeta+\int_\Real\frac{1+t\zeta}{t-\zeta}\,d\mu(t)\qquad\text{for all
$\zeta\in\UH_i$},
$$
where $\alpha\in\Real$, $\beta\ge0$, and $\mu$ is a bounded positive Borel
measure on~$\Real$. Now replacing $\alpha$ by~$-\alpha$ we immediately
obtain~\eqref{EQ_ReprHprime}.

To deduce~\eqref{EQ_ReprH} we notice that for all $z\in\UH_1$ and $t\in\Real$
the integrand $A(z,t)$ in~\eqref{EQ_ReprHprime} satisfies
$$
|A(z,t)|\le\frac{1+2|z|^2}{\Re z}.
$$
(To check this inequality consider separately the cases $|t|\le2|z|$ and
$|t|\ge2|z|$.) Since the measure~$\mu$ is bounded, this allows us to
integrate~\eqref{EQ_ReprHprime} on the segment~$[1,z]$ using the Fubini
Theorem. This immediately leads to formula~\eqref{EQ_ReprH}. The proof is
complete.
\end{proof}

\begin{proof}[{\bfit Proof of Proposition~\ref{PR_Re_h}}]\mbox{~}
\step1{First let us prove the existence of the angular limits of~$h$ at every
point on~$\UC$.} We note that for $\sigma=\tau$ this statement is well-known.
Indeed, it follows from the fact for any $z_0\in\UD$ the trajectory
$[0,+\infty)\ni t\mapsto \gamma(t):=\phi_t(z_0)$ tends, as $t\to+\infty$, to
the DW-point~$\tau$, see Remark~\ref{RM_cont-DW-theorem}. Since $h$ satisfies
the Abel equation, we have $(h\circ\gamma)(t)=h(z_0)+t$ for all~$t\ge0$.
Therefore, $h(z)\to\infty$ as $z\to\tau$ along the curve~$\gamma$,
\textsl{i.e.}, $\infty$ is an asymptotic value of~$h$ at~$\tau$. Since $h$ is
univalent, it is normal in~$\UD$ (see,~\textsl{e.g.},~\cite[Lemma~9.3
on~p.\,262]{Pommerenke}). It follows (see,~\textsl{e.g.},~\cite[Theorem~9.3
on~p.\,268]{Pommerenke}) that the angular limit of~$h$ at~$\sigma=\tau$ exists
and equals~$\infty$.

Now we can assume that~$\sigma\neq\tau$. By the above argument it sufficient to
show that the function~$h$ has an asymptotic value at~$\sigma$. To this end,
according to Remark~\ref{RM_H}, we only have to prove that if
$H:\UH_1\to\Complex$ is holomorphic and $\Re H'>0$ in~$\UH_1$, then there
exists the limit of~$H(x)$ as~$(0,1)\ni x\to0$. Note that $\lim_{(0,1)\ni
x\to0}\Re H(x)$ exists (finite or infinite) because $(d/dx) \Re H(x)=\Re
H'(x)>0$. We claim that
\begin{equation}\label{EQ_limImH}
\text{the limit}\qquad \lim_{(0,1)\ni x\to0}\Im H(x)\qquad\text{exists
finitely.}
\end{equation}
Using representation~\eqref{EQ_ReprH} given in
Lemma~\ref{LM_HRe_prime_positive}, for all $x\in(0,1)$ we obtain
\begin{multline}\label{EQ_ReprImH}
\Im H(x)=\Im
H(1)+\alpha(x-1)+\frac{\beta}{2}(x^2-1)\\+\int_\Real\left[(x-1)t+(1+t^2)\left(\arctan\frac{t}{x}
- \arctan t\right)\right]\,d\mu(t).
\end{multline}
Note that the integrand~$B_x(t)$ in the above formula tends pointwise to
\begin{align*}
B_0(t)&:=-t+(1+t^2)\left(\frac\pi2\mathop{\mathrm{sgn}} t - \arctan
t\right)=-t+(1+t^2)\arctan\frac1t,\quad t\neq0,\\
B_0(0)&:=0,
\end{align*}
as $x\to+0$. It is easy to see that $|B_x(t)|\le |B_0(x)|$ for all $t\in\Real$
and $x\in(0,1)$. Moreover, the function $B_0$ is bounded on~$\Real$. Recall
that the measure~$\mu$ is also bounded. Thus our claim~\eqref{EQ_limImH}
follows form~\eqref{EQ_ReprImH} with the help of Lebesgue's Dominated
Convergence Theorem. This completes Step 1 of the proof.

\step2{Proof of inequality~\eqref{EQ_Re-h-to_-infty}.} This inequality is equivalent to
\begin{equation}\label{EQ_Re-H-to_-infty}
\limsup_{\UH_1\ni z\to 0}\Re H(z)<+\infty.
\end{equation}
Since $(\partial/\partial x)\Re H(x+iy)=\Re H'(x+iy)>0$ for all
$z:=x+iy\in\UH_1$, we have $\Re H(x+iy)<\Re H(1+iy)$ for any $y\in\Real$ and
$x\in(0,1)$. Passing in this inequality to the upper limit as $z=x+iy\to0$ we
obtain~\eqref{EQ_Re-H-to_-infty}. The proof is now complete.
\end{proof}

\begin{proof}[{\bfit Proof of Theorem~\ref{TH_anglims}}]
First of all, using Proposition~\ref{PR_unif-cont-inverse} together with the forerunning comment, we may assume
that the DW-point of~$(\phi_t)$ is $\tau=1$.

Fix any $\sigma\in\UC$. A fundamental family of Stolz angles with vertices
at~$\sigma$ is given by
$$
S_{\sigma,\alpha}:=\Big\{z\in\UD:\big|\arg(1-\overline\sigma
z)\big|<\alpha,\,\big|1-\overline\sigma
z\big|<(\cos\alpha)/2\Big\},\quad\alpha\in(0,\pi/2).
$$
Fix any Stolz angle $S$ in this family. Then $S$ is a domain in $\UD$,
with ${\partial S\cap\partial \UD=\{\sigma\}}$. That is why it follows from
Proposition~\ref{PR_Re_h} that $h|_S$ admits a continuous extension
to~$\closure S$ (as a mapping into~$\ComplexE$). Taking into account that $S$
is convex, it follows that $h|_S$ is uniformly continuous as a mapping from $S$
endowed with the Euclidean distance into the domain $\Omega:=h(\UD)$ endowed
with the distance~$d_\Omega$\,\footnote{Given two points $z_1,z_2\in S$, consider $h([z_1,z_2])$ as a candidate for $\Gamma$ in~\eqref{EQ_dist}.}, which has been introduced in
section~\ref{SS_PR_cont}.

Fix now $T>0$. The family of translations $\mathcal T:=(\ComplexE\ni w\mapsto
w+t)_{t\in[0,T]}$, where as usually we set $\infty+t=\infty$, is uniformly
equicontinuous in $\ComplexE$ w.r.t. the spherical distance. Since $\Omega$ is
invariant w.r.t. elements of~$\mathcal T$, it is easy to see that~$\mathcal T$
is a uniformly equicontinuous family of self-maps of~$\Omega$ endowed with the
distance~$d_\Omega$.

Finally, recall that $h$ is univalent. By
Proposition~\eqref{PR_unif-cont-inverse}, $h^{-1}$ is uniformly continuous
in~$\Omega$ w.r.t. the distance~$d_\Omega$.

Now combining the above facts, it is easy to conclude that the composite family
$$
\Big(S\ni z\mapsto \phi_t(z)=h^{-1}\big(h(z)+t\big)\in \UD\Big)_{t\in[0,T]}
$$
is uniformly equicontinuous in~$S$. Hence it admits uniformly equicontinuous
extension to~$\closure S$. The statement of Theorem~\ref{TH_anglims} follows
now immediately.
\end{proof}
\begin{proof}[{\bfit Proof of Proposition~\ref{PR_cont-in-t}}]
Let $\tau$ be the DW-point of~$(\phi_t)$. Clearly, it is sufficient to consider
cases~$\tau=0$ and $\tau=1$.

\case{1}{$\tau=1$.} In this case the K\oe{}nigs function~$h:\UD\to\Complex$
of~$(\phi_t)$ satisfies the Abel functional equation $h(\phi_t(z))=h(z)+t$ for
all $t\ge0$ and all $z\in\UD$. It follows that
$$
d_\Omega\big(h(\phi_{t_1}(z)),h(\phi_{t_2}(z))\big)\le|t_1-t_2|,\quad
\Omega:=h(\UD),
$$
for any $z\in\UD$ and any $t_1,t_2\in[0,+\infty)$.

Recall that~$h$ is univalent. Therefore, the statement of the proposition for
$z$ ranging in~$\UD$ follows from Proposition~\ref{PR_unif-cont-inverse}. It is
also true for the closed unit disk, because $\phi_t(z)$ is continuous in~$z$ on
each radius $[0,\sigma]$, where~$\sigma\in\UC$. Thus for $\tau=1$ the proof is
finished.

\case{2}{$\tau=0$.} Fix any $t_1\ge0$ and $t_2\ge t_1$. Using the Maximum
Principle for holomorphic functions we see that
$$\sup_{z\in\UD}|\phi_{t_2}(z)-\phi_{t_1}(z)|~~~\le~~~\sup_{z\in\UD}|\phi_{t_2-t_1}(z)-z|~~~=
\sup_{2/3<|z|<1}|\phi_{t_2-t_1}(z)-z|~~~=:~~\Upsilon(t_2-t_1).$$ It is enough
to show that $\Upsilon(t)\to0$ as $t\to+0$.

Taking advantage of Proposition~\ref{PR_lifting}, consider the one-parameter
semigroup formed by the functions $\psi_t:=p_0^{-1}\circ\tilde\phi_t\circ p_0$,
${t\ge0}$, where $p_0(\zeta):=(1+\zeta)/(1-\zeta)$ is the Cayley map. Then
$\phi_t(e^{-p_0(\zeta)})=e^{-p_0(\psi_t(\zeta))}$ for all $z\in\UD$. Hence
$$\Upsilon(t)=\sup_{\zeta\in\Pi}|e^{-p_0(\psi_t(\zeta))}-e^{-p_0(\zeta)}|,~~
\text{where }\Pi:=p_0^{-1}\big(\{\tilde x+i\tilde y:~0<\tilde x\le\log
3/2,\,|\tilde y|\le\pi\}\big).$$ By Case~1, for all $\zeta\in\Pi$,
$$
\psi_t(\zeta)\in\tilde \Pi:=p_0^{-1}\big(\{\tilde x+i\tilde y:~0<\tilde
x\le\log 2,\,|\tilde y|\le2\pi\}\big)
$$
provided $t>0$ is small enough. Since the derivative of the map $\zeta\mapsto
e^{-p_0(z)}$ is bounded on the convex hull of~$\tilde\Pi$, there exists $M>0$
such that $|e^{-p_0(\psi_t(\zeta))}-e^{-p_0(\zeta)}|\le M
|\psi_t(\zeta)-\zeta|$ for all $\zeta\in\Pi$ and  all~$t>0$ small enough.
Now by Case~1, it follows that $\Upsilon(t)\to0$ as~$t\to+0.$ The proof is now
complete.
\end{proof}

\section{Unrestricted limits at boundary fixed points}
\subsection{Main Theorem}
Now we formulate the main result of this paper. In~\cite[Corollary~3]{CMP2004}
it was proved that elements of a one-parameter semigroup with the interior
DW-point can be extended continuously to the boundary fixed points. This
statement can also be proved for the case when the DW-point $\tau\in\partial
\UD$. For a repelling boundary fixed\footnote{By repelling boundary fixed
points we mean all boundary fixed points (regular or non-regular) except for
the DW-point.} point~$\sigma$ the reason why $\phi_t$ has the unrestricted
limit at~$\sigma$ is essentially the same in both cases: the K\oe{}nigs function
has the unrestricted limit at all repelling boundary fixed points. However, for
$\sigma=\tau$ this is not true any more. The K\oe{}nigs does not need to have the
unrestricted limit at the boundary DW-point~$\tau$, see, \textsl{e.g.}, Example~\ref{EX1}.
\begin{theorem}\label{TH_cont-at-fixed-points}
Let $(\phi_t)$ be a one-parameter semigroup in~$\UD$. For each $t\ge0$ and each
$\sigma\in\UC$ denote $\phi_t(\sigma):=\angle\lim_{z\to\sigma}\phi_t(z)$. Then
for any $T>0$ the family of mappings
$$
\Phi_T:=\Big(\UDc\ni z\mapsto \phi_t(z)\in\UDc\Big)_{t\in[0,T]}
$$
is equicontinuous at every boundary fixed point of~$(\phi_t)$. Moreover, if the
semigroup~$(\phi_t)$ is of hyperbolic type\footnote{A one-parameter
semigroup~$(\phi_t)$ is said to be of \textit{hyperbolic type} if its DW-point
$\tau\in\UC$ and $\phi_t'(\tau)>1$ for some (and hence for all)~$t>0$. See,
\textsl{e.g.}, \cite{ElinShoikhetBook} or~\cite{ElinShoikhet} for more
details.}, then the family
$$
\Phi:=\Big(\UDc\ni z\mapsto \phi_t(z)\in\UDc\Big)_{t\ge0}
$$
is equicontinuous at the DW-point~$\tau$ of~$(\phi_t)$.
\end{theorem}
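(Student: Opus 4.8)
The plan is first to reduce to a semigroup whose Denjoy--Wolff point lies on $\UC$, and then to treat the repelling boundary fixed points and the Denjoy--Wolff point by completely different devices. Since the covering relation~\eqref{EQ_phi-phi_tilde} of Proposition~\ref{PR_lifting} is a homeomorphism near the boundary which carries boundary fixed points to boundary fixed points and preserves local equicontinuity, it suffices to prove the statement when $\tau\in\UC$; normalizing, I take $\tau=1$. All estimates will be established for $z$ ranging over $\UD$ and then propagated to $\UDc$ as in the proof of Proposition~\ref{PR_cont-in-t}, using that $\phi_t(\sigma)$ is the radial limit of $\phi_t(r\sigma)$ for $\sigma\in\UC$.

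For a repelling boundary fixed point $\sigma\neq\tau$ the mechanism of~\cite{CMP2004} works. By Proposition~\ref{Pr_repelling_iff} one has $\Re h(z)\to-\infty$ unrestrictedly as $z\to\sigma$, so $h$ possesses the unrestricted limit $\infty\in\ComplexE$ there. Writing $\phi_t(z)=h^{-1}(h(z)+t)$, the rightward translation invariance of $\Omega:=h(\UD)$ keeps the whole segment $[h(z),h(z)+t]$ inside $\Omega$, so that
\[
d_\Omega\big(h(z),h(z)+t\big)\le\diam_{\ComplexE}[h(z),h(z)+t].
\]
Since this segment has Euclidean length at most $T$ and its basepoint tends to $\infty$ on the sphere, its spherical diameter tends to $0$ as $z\to\sigma$, uniformly for $t\in[0,T]$. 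The $d_\Omega$-uniform continuity of $h^{-1}$ (Proposition~\ref{PR_unif-cont-inverse}) then gives $|\phi_t(z)-z|\to0$ uniformly in $t\in[0,T]$, and combined with $z\to\sigma$ this yields equicontinuity of $\Phi_T$ at $\sigma$. This is what the auxiliary Propositions~\ref{PR_continuous_h} and~\ref{PR_if_h_cont_then_phi_cont} encapsulate.

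The Denjoy--Wolff point is the genuinely hard case, since $h$ need not have an unrestricted limit there (Example~\ref{EX1}); here I abandon $h$ and pass to the half-plane. With $L(z)=(1+z)/(1-z)$ mapping $\UD$ onto $\UH_1=\{\Re w>0\}$ and $\tau=1$ to $\infty$, the conjugated semigroup $\Phi_t:=L\circ\phi_t\circ L^{-1}$ has DW-point $\infty$, and by the Berkson--Porta form $G(z)=(1-z)^2p(z)$ its generator is $\Psi(w)=2p(L^{-1}(w))$, so $\Re\Psi\ge0$. Consequently $t\mapsto\Re\Phi_t(w)$ is non-decreasing and
\[
\Re\Phi_t(w)\ge\Re w\qquad(w\in\UH_1,\ t\ge0).
\]
Equicontinuity at $\tau$ becomes $\Phi_t(w)\to\infty$ (spherically), uniformly in $t$, as $w\to\infty$ in $\closure{\UH_1}$. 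When $\Re w\to\infty$ this is immediate from the displayed inequality, for all $t\ge0$ at once; and for $t$ in a small interval $[0,t_0]$ it follows for every approach from the uniform $t$-equicontinuity of Proposition~\ref{PR_cont-in-t}, which gives $\phi_t(z)\to\tau$ as $z\to\tau$ uniformly for $t\in[0,t_0]$. What remains, and what is delicate, is the tangential regime $|w|\to\infty$ with $\Re w$ bounded (equivalently, $z\to\tau$ tangentially) at the non-small times.

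For finite $T$ and all types one controls the tangential regime through the Herglotz structure of $\Psi$, bounding the displacement of $\Im\Phi_t$ over $[0,T]$ so that $|\Im\Phi_t(w)|$ stays large once $|\Im w|$ is large; this yields the first assertion at $\tau$. For the second assertion --- hyperbolic type, all $t\ge0$ --- the finite-horizon control degenerates as $t\to+\infty$, and I would instead invoke the contraction from Julia's lemma at the DW-point, which in the half-plane reads $\Re\Phi_t(w)\ge e^{\lambda t}\Re w$ for some $\lambda>0$: for $\Re w$ bounded below this forces $|\Phi_t(w)|\to\infty$ once $t$ is large, and splicing with the small-time estimate covers all $t$. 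The \textbf{main obstacle} is the extreme tangential approach where $|w|\to\infty$ while $\Re w\to0$: there the exponent in the finite-horizon bound and the exponent $\lambda$ in Julia's inequality are comparable, so their naive combination is only borderline and fails to force $|\Phi_t(w)|\to\infty$ uniformly for all $t$. This is exactly the phenomenon peculiar to the boundary DW-point. To defeat it I would use the finer geometry special to hyperbolic type, namely that $\Omega$ is confined to a horizontal strip of width $\pi/\lambda$ and invariant under rightward translation --- the content of Proposition~\ref{PR_hyperbol} --- which upgrades the one-sided Julia estimate into the uniform orbit control needed to conclude.
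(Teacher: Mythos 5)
Your reduction to the boundary DW-point and your treatment of the repelling boundary fixed points are sound: combining Proposition~\ref{Pr_repelling_iff} (unrestricted limit $\Re h\to-\infty$), the translation invariance of $\Omega$, the spherical smallness of the segments $[h(z),h(z)+t]$, and Proposition~\ref{PR_unif-cont-inverse} is essentially the paper's own route (Corollary~\ref{C_h-cont-at-rep} followed by Proposition~\ref{PR_if_h_cont_then_phi_cont}), in a streamlined form. The genuine gap is at the boundary DW-point for finite horizon $T$ --- exactly the part the paper singles out as new. There your argument reduces to the single assertion that the Herglotz structure of the generator $\Psi$ ``bounds the displacement of $\Im\Phi_t$ over $[0,T]$ so that $|\Im\Phi_t(w)|$ stays large once $|\Im w|$ is large''. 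No proof is offered, and the mechanism does not work as stated: since $\Im\Phi_t(w)-\Im w=\int_0^t\Im\Psi(\Phi_s(w))\,ds$ and $\Im\Psi$ is unbounded on $\UH_1$ (already the linear part $\beta w$ of the Nevanlinna representation contributes $\beta\,\Im w$, and the integral term can be of size $|\Im w|^2/\Re w$ when the representing measure has mass near $-\Im w$), positivity of $\Re\Psi$ alone yields no uniform displacement bound. Moreover, your intermediate claim is strictly stronger than the theorem's conclusion --- equicontinuity at $\tau$ only requires $|\Phi_t(w)|\to\infty$, which could equally be achieved through $\Re\Phi_t(w)$ becoming large --- and nothing you write excludes the flow converting large $|\Im w|$ into large $\Re$. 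Whether that happens is governed by the prime-end geometry of $\Omega$ at $\tau$, which can be as degenerate as in Example~\ref{EX1} (impression equal to the whole boundary of a strip), not by pointwise estimates on $\Psi$.

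This missing step is precisely what the paper's Proposition~\ref{PR_phi_cont_DW} supplies: one constructs a null-chain $(C_n)$ of cross-cuts of~$\Omega$ adapted to the translations, proves the nesting property~\eqref{EQ_vlozh}, namely $D_k+t\subset D_n$ for $0\le t\le k-n$, and transports it back to~$\UDc$ via Proposition~\ref{PR_unif-cont-inverse}; that construction in turn rests on Proposition~\ref{PR_continuous_h}, Proposition~\ref{PR_impressions} and Lemma~\ref{LM_prime-end-nonDW}. You neither invoke Proposition~\ref{PR_phi_cont_DW} nor provide a substitute, so the first assertion of Theorem~\ref{TH_cont-at-fixed-points} remains unproved in your text. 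For the second assertion, your final appeal to the strip confinement of $\Omega$ in the hyperbolic case (\cite[Theorem~2.1]{AnaFlows}) together with Proposition~\ref{PR_hyperbol} is exactly the paper's argument and is complete as a citation; the Julia-lemma detour preceding it is superfluous and, as you concede, inconclusive in the tangential regime. Note, however, that Proposition~\ref{PR_hyperbol} helps only when its condition~(A) holds, so in the non-hyperbolic case --- where $\Omega$ may contain a horizontal half-plane --- it cannot be used to close the finite-$T$ gap above. A small misattribution: Proposition~\ref{PR_continuous_h} concerns the DW-point, not the repelling fixed points.
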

The proof of this theorem is given in Section~\ref{SS_proof}.

\begin{remark}\label{RM_no-for-contract-points}
By the theorem above, the unrestricted limit $\lim_{\UD\ni
z\to\sigma}\phi_t(z)$ exists at any boundary fixed point~$\sigma\in\UC$
of~$(\phi_t)$. One might ask if the same holds \textit{for the contact points
of~$(\phi_t)$}. The answer is: ``not necessarily'', see
Example~\ref{EX_contact} in section~\ref{SS_rem-loc-beh}.
\end{remark}

\subsection{K\oe{}nigs function of a one-parameter semigroup with the boundary
DW-point}\label{SS_Koenigs}

In this subsection we prove some auxiliary statements concerning K\oe{}nigs
functions of one-parametric semigroups with the boundary DW-point, and obtain
from them some consequences  characterizing the semigroup itself. Throughout
the subsection \textit{we will assume that $(\phi_t)$ is a one-parameter
semigroup with the DW-point $\tau=1$}. By $h$ we denote its K\oe{}nigs function,
and let $\Omega:=h(\UD)$.

First of all we need to make some general remarks and recall some definitions.
\begin{definition}
By a {\it slit} (or an {\it end-cut})~$\gamma$ in a domain~$D\subset\ComplexE$
we mean the image of~$[0,1)$ under an injective continuous mapping
$\varphi:[0,1]\to\closure D$ with $\varphi([0,1))\subset D$ and
$\varphi(1)\in\partial D$. The point $\varphi(1)$ is said to the the {\it root}
or the {\it landing point} of the slit~$\gamma$.
\end{definition}
\begin{remark}\label{RM_a.p.correspondence}
Two slits $\gamma_1$, $\gamma_2$ in a domain $D$ are called {\it equivalent} if
they share the same root $\omega_0\in\partial D$ and any neighbourhood
of~$\omega_0$ contains a curve $\Gamma\subset D$ that joins $\gamma_1$
and~$\gamma_2$. It is known (see,\,\textsl{e.g.},\,\cite[Theorem~1 in
Chapter~2, \S3]{Goluzin}) that if $D=f(\UD)$, where $f:\UD\to\ComplexE$ is a
conformal mapping, then the preimage $f^{-1}(\gamma)$ of any slit~$\gamma$
in~$D$ is  a slit in the unit disk~$\UD$. Moreover, in this case two slits
$\gamma_1$, $\gamma_2$ in~$D$ are equivalent if and only if their preimages
$f^{-1}(\gamma_1)$ and $f^{-1}(\gamma_2)$ land at the same point on~$\UC$. A
{\it cross-cut} in $D$ can be defined as a union of two non-equivalent slits
intersecting only at their common end-point in~$D$. It follows that the
preimage $f^{-1}(C)$ of any cross-cut~$C$ in $D$ is a cross-cut in~$\UD$.
\end{remark}

\begin{remark}\label{RM_slittoinfty}
Let $w_0\in\Omega$. Then $R_0:=\{w_0+t:t>0\}\subset\Omega$. Moreover, $R_0$ is
a slit in~$\Omega$, whose preimage $h^{-1}(R_0)$ is a slit in~$\UD$ landing at
the DW-point~$\tau=1$, because $h^{-1}(w_0+t)=\phi_t(h^{-1}(w_0))\to\tau$
as~$t\to+\infty$.
\end{remark}

First we prove that continuity of $h$ at a boundary point implies continuity of
$z\mapsto \phi_t(z)$ at that point locally uniformly w.r.t.~$t$.
\begin{proposition}\label{PR_if_h_cont_then_phi_cont}
Assume that $h$ has the unrestricted limit, finite or infinite, at a
point~$\sigma\in\UC$. Then the functions $\phi_t$ also have unrestricted limits
at~$\sigma$ and the convergence $\phi_t(z)\to\phi_t(\sigma)$ as $\UD\ni
z\to\sigma$ is locally uniform w.r.t.~$t\ge0$.
\end{proposition}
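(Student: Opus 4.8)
The plan is to reduce everything to the statement for the Köenigs function via the Abel equation $\phi_t(z)=h^{-1}(h(z)+t)$, which holds (after reducing to $\tau=1$ by Proposition~2.1 if necessary) for all $z\in\UD$ and all $t\ge0$. First I would distinguish the two cases according to whether the unrestricted limit of $h$ at $\sigma$ is finite or infinite. If $\angle\lim_{z\to\sigma}h(z)=w_0\in\Complex$ is finite, then as $\UD\ni z\to\sigma$ we have $h(z)+t\to w_0+t$; since $w_0+t\in\Omega:=h(\UD)$ and $h^{-1}$ is continuous on $\Omega$, the unrestricted limit $\phi_t(\sigma)=h^{-1}(w_0+t)$ exists. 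If instead $\angle\lim_{z\to\sigma}h(z)=\infty$, then $h(z)+t\to\infty$ for every fixed $t$, and by Remark~3.2 the point $\sigma$ must be a boundary fixed point, so $\phi_t(\sigma)=\sigma$ exists trivially; in this case the local-uniform claim is automatic since the limit does not depend on $t$.

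The substance is therefore the local uniformity in $t$ in the finite case, and here I would run an argument parallel to the proof of Theorem~3.2, using the metric $d_\Omega$ from~\eqref{EQ_dist} together with Proposition~3.1. Fix $T>0$. The key three ingredients are: (a) the hypothesis gives that the unrestricted limit of $h$ at $\sigma$ exists, so $h$ restricted to a suitable approach region (or, more robustly, to a fundamental neighbourhood system at $\sigma$ in $\UD$) is uniformly continuous as a map into $(\Omega,d_\Omega)$; (b) the family of translations $(\Complex\ni w\mapsto w+t)_{t\in[0,T]}$ is uniformly equicontinuous on $(\Omega,d_\Omega)$ because $\Omega$ is translation-invariant and $d_\Omega(w+t,w+s)\le|t-s|$; and (c) by Proposition~3.1 the inverse $h^{-1}$ is uniformly continuous from $(\Omega,d_\Omega)$ into the Euclidean $\UD$. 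Composing these three facts, the family $\big(z\mapsto \phi_t(z)=h^{-1}(h(z)+t)\big)_{t\in[0,T]}$ is uniformly equicontinuous near $\sigma$, which yields both the existence of the unrestricted limits $\phi_t(\sigma)$ and the local-uniform convergence $\phi_t(z)\to\phi_t(\sigma)$.

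The main obstacle I expect is ingredient~(a): the hypothesis only gives an \emph{unrestricted} (pointwise) limit of $h$ at $\sigma$, not uniform continuity of $h$ on a whole approach region. The subtlety is that the diameter $d_\Omega$ is a conformally sensitive quantity, so one cannot simply invoke Euclidean uniform continuity of $h$. I would handle this by working with full disk-neighbourhoods $N_r:=\{z\in\UD:|z-\sigma|<r\}$ of $\sigma$: since $h$ has an unrestricted limit $w_0$ at $\sigma$, the Euclidean diameter of $h(N_r)$ is controlled, but more importantly the relevant estimate is that as $r\to 0^+$ the sets $h(N_r)$ shrink toward $w_0$ in the $d_\Omega$-sense — this is exactly where the finiteness of $w_0$ (so that $w_0$ is an interior-accessible boundary situation rather than the point at infinity) and Proposition~3.1 interact. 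Once the $d_\Omega$-shrinking of $h(N_r)$ is established, translating by $t\in[0,T]$ and applying the uniform continuity of $h^{-1}$ closes the argument uniformly in $t$.
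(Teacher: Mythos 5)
Your treatment of the case $\lim_{z\to\sigma}h(z)=\infty$ contains the real gap: you declare it ``automatic'', but it is precisely the substantial case. Knowing from Remark~\ref{RM_h-to-infty=fixed} that $\sigma$ is a boundary fixed point only gives the \emph{angular} limit $\phi_t(\sigma)=\sigma$; the existence of the \emph{unrestricted} limit of $\phi_t$ at $\sigma$ is exactly what the proposition asserts and must be proved. It cannot be read off from $h(z)+t\to\infty$, because $h^{-1}$ has no continuity at $\infty$: several prime ends of $\Omega$ have $\infty$ in their impression (at least the one of the DW-point, by Remark~\ref{RM_slittoinfty}, in addition to the one of $\sigma$), so points $w_n\to\infty$ in $\Omega$ may have $h^{-1}(w_n)$ accumulating at $\tau$ or elsewhere on $\UC$, not at $\sigma$. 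Nor does constancy of the limit in $t$ make the local uniformity in $t$ automatic. Note that this dismissed case is the one the paper actually needs: Theorem~\ref{TH_cont-at-fixed-points} applies the proposition at repelling boundary fixed points, where by Corollary~\ref{C_h-cont-at-rep} the unrestricted limit of $h$ is $\infty$. Your finite case also opens with a false statement: the unrestricted limit $w_0$ lies on $\partial\Omega$, not in $\Omega$, and $w_0+t$ need not lie in $\Omega$ either --- if $\Omega$ is a half-plane bounded by a horizontal line (a semigroup of parabolic automorphisms), then $w_0+t\in\partial\Omega$ for all $t\ge0$ --- so ``continuity of $h^{-1}$ on $\Omega$'' gives nothing.

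The good news is that your ingredients (a)--(c) are exactly the engine of the paper's own proof (the paper runs them along an arbitrary slit landing at $\sigma$ and then identifies the limit along the slit with the angular limit via the asymptotic-value theorem for normal functions; your neighbourhood version is, if anything, more direct, since an unrestricted limit needs no such identification). Run them with \emph{no case distinction} and both defects disappear: $d_\Omega$ is defined through the \emph{spherical} diameter $\diam_{\ComplexE}$, so the hypothesis that $h$ has an unrestricted limit at $\sigma$, finite \emph{or infinite}, gives $\diam_{\ComplexE}h(N_r)\to0$ as $r\to0$, and since $h(N_r)$ is pathwise connected this bounds the $d_\Omega$-diameter of $h(N_r)$; the translates $h(N_r)+t$, $t\in[0,T]$, stay in $\Omega$ and have uniformly small spherical diameter because the translations $w\mapsto w+t$, $t\in[0,T]$, are uniformly equicontinuous on $\ComplexE$ in the spherical metric --- this, not your inequality $d_\Omega(w+t,w+s)\le|t-s|$, which concerns continuity in $t$ at fixed $w$, is the correct justification of (b); and then Proposition~\ref{PR_unif-cont-inverse} gives $\diam_\Complex\phi_t(N_r)\to0$ uniformly in $t\in[0,T]$, which yields the existence of the unrestricted limits and the locally uniform convergence in one stroke. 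So your concern that ingredient (a) requires finiteness of $w_0$ has the difficulty exactly backwards: the spherical metric in the definition of $d_\Omega$ makes $w_0=\infty$ no harder than $w_0\in\Complex$, and it is the ``trivial'' infinite case, not the finite one, that your written argument fails to cover.
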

\begin{proof}
As an elementary argument of {\sl reductio ad absurdum} shows, it is sufficient
to prove that given any slit $\gamma$ in~$\UD$ landing at~$\sigma$, the
functions $\phi_t(z)$ tend to $\phi_t(\sigma)$ locally uniformly w.r.t.~$t$ as
$z$ tends to~$\sigma$ along~$\gamma$.

We use essentially the same idea as in the proof of~Theorem~\ref{TH_anglims}.
The restriction $h|_\gamma$ as a mapping from $\gamma$ endowed with the
Euclidean distance to the domain~$\Omega$ endowed with the distance~$d_\Omega$,
is uniformly continuous. For each $T>0$ the family of self-maps $(\Omega\ni
w\mapsto w+t\in\Omega)_{t\in[0,T]}$, is uniformly equicontinuous in~$\Omega$
w.r.t. the distance~$d_\Omega$. Using Proposition~\ref{PR_unif-cont-inverse}
for $f:=h$, we conclude that  for any $T>0$ the family $$\Big(\gamma\ni
z\mapsto \phi_t(z)=h^{-1}(h(z)+t)\Big)_{t\in[0,T]}$$ is uniformly
equicontinuous. This means that $\phi_t|_\gamma$ has the limit at~$\sigma$
locally uniformly in~$t\ge0$. According to Lemma~9.3 and Theorem~9.3 in
\cite[p.\,262--268]{Pommerenke}, all the asymptotic values of a univalent
holomorphic function in~$\UD$ at a given point of~$\partial \UD$, if any
exists, coincide with the angular limit at this point. Hence, $\lim_{\gamma\ni
z\to\sigma}\phi_t(z)=\phi_t(\sigma)$. The proof is now complete.
\end{proof}

Now we concentrate on the study of $h$ near the boundary fixed points
of~$(\phi_t)$. Note that the angular limit of $h$, which exists according to
Proposition~\ref{PR_Re_h}, equals $\infty$ at each boundary fixed point. Our
arguments use extensively the theory of boundary correspondence under conformal
mappings of simply connected domains. We refer the reader
to~\cite[Chapter~9]{ClusterSets} or \cite[\S\S2.4--2.5]{Pommerenke-II} for the
basic theory and definitions.

By $\partial E$ we will denote the boundary of a set $E\subset\ComplexE$. If
$E\subset \Complex$, we will write $\partial_\Complex E$ for $\partial
E\setminus\{\infty\}$. For a prime end~$P$ of a simply connected domain
$\Omega\subset\ComplexE$, we will denote by $I(P)$ its impression. A prime end
is said to be \textit{trivial} if its impression is a singleton.

Let $(C_n)$ be a null-chain of cross-cuts in~$\Omega$ representing some prime
end~$P$. For each $n\in\Natural$ denote by $D_n$ the connected component of
$\Omega\setminus C_n$ that contains $C_{n+1}$. We recall now one definition
from the theory of prime ends.
\begin{definition}\label{D_conv_prime-end}
A sequence $(w_k)\subset\Omega$ is said to {\it converge to the prime end~$P$},
if for the null-chain~$(C_n)$ representing the prime end~$P$ (and hence for all
such null-chains) the following statement holds: for every $n\in\Natural$,
there exists $k_0$ such that $w_k\in D_n$ whenever~${k>k_0}$. In a similar way
one defines convergence to a prime end for slits in~$\Omega$ and more general
continuous families $(w_x\in\Omega)_{x\in J}$, $J\subset\Real$.
\end{definition}
\begin{definition}\label{D_conv-null-chain}
We say that $(C_n)$ {\it converges to a point $w_0\in\partial\Omega$} if for
any neighbourhood~$\mathcal O$ of~$w_0$ all but a finite number of~$C_n$'s lie
in $\mathcal O$. Taking into account that $\diam_{\ComplexE}(C_n)\to0$ as
$n\to+\infty$ by the very definition of a null-chain, the equivalent condition
is that there exists a sequence $(w_n)\subset\Omega$ converging to~$w_0$ such that $w_n\in
C_n$ for all~$n\in\Natural$.
\end{definition}

Now we are going to prove that $h$ is continuous at every repelling boundary
fixed point of~$(\phi_t)$. The proof is based on the following lemma.
\begin{lemma}\label{LM_cross-cut-mimus-infty}
Let $x_0\in\Real$ and let $y_0:(-\infty,x_0]\to\Real$ be a continuous function.
Suppose that the graph $\Gamma:=\{x+i y_0(x):\,x\le x_0\}$ lies entirely
in~$\Omega:=h(\UD)$ and that there exist $w_1,w_2\in\Complex\setminus\Omega$
such that $\Re w_1,\Re w_2
> x_0$ and $\Im w_1<y_0(x_0)<\Im w_2$. Then $x\mapsto x+iy_0(x)$ converges,  as
$x\to-\infty$, to a trivial prime end of~$\Omega$.
\end{lemma}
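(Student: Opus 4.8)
The plan is to exploit the translation invariance of $\Omega$ to reduce its geometry to that of a region lying to the right of a graph, and then to read off the prime end from an explicit null-chain of vertical crosscuts whose closures recede to $\infty$.

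\emph{Structural reduction.} Since $h$ satisfies the Abel equation, $\Omega+t\subset\Omega$ for every $t\ge0$; hence $\Omega$ meets each horizontal line $\{\Im w=y\}$ in a set that is open and invariant under $w\mapsto w+t$, i.e.\ in an open rightward ray. Thus $\Omega=\{x+iy:\ x>\rho(y)\}$ for some $\rho\colon\Real\to[-\infty,+\infty]$, and its complement is invariant under $w\mapsto w-t$. Applying this to $w_1,w_2\notin\Omega$, the leftward horizontal rays through them lie in $\ComplexE\setminus\Omega$ and $\rho(\Im w_j)\ge\Re w_j>x_0$ for $j=1,2$. Because these two complement rays sit at the heights $\Im w_1<y_0(x_0)<\Im w_2$ and extend to the left of $x_0$, continuity of $y_0$ forbids $\Gamma$ from meeting them, so $\Im w_1<y_0(x)<\Im w_2$ for all $x\le x_0$: the graph is trapped in the strip.

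\emph{Confined ends and crosscuts.} For $r\le x_0$ let $(a(r),b(r))$ be the component of the open set $\{y:\ \rho(y)<r\}$ containing $y_0(r)$, and set $C_r:=\{r+iy:\ a(r)<y<b(r)\}$; this is a crosscut of $\Omega$ with endpoints $r+ia(r),r+ib(r)\in\partial\Omega$. I would then prove the decisive claim that the tail-side component $D_r$ of $\Omega\setminus C_r$ equals $\{x+iy:\ \rho(y)<x<r,\ a(r)<y<b(r)\}$, and hence is confined to the box $\{\Re w<r\}\cap\{\Im w_1\le\Im w\le\Im w_2\}$. Here the graph picture is essential: at the heights $a(r),b(r)$ one has $\rho\ge r$, so the horizontal half-lines at these heights with real part $<r$ avoid $\Omega$ and \emph{seal} the strip; a path in $\Omega$ issuing from the tail can neither cross these seals nor pass through $\{\Re w=r\}$ anywhere in $(a(r),b(r))$ except along $C_r$ itself. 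The inclusions $a(r)\ge\Im w_1$ and $b(r)\le\Im w_2$ follow from $\rho(\Im w_j)>x_0\ge r$.

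\emph{Null-chain and conclusion.} Fix $r_0=x_0$ and a sequence $r_n\downarrow-\infty$. The crosscuts $(C_{r_n})$ form a null-chain: their closures are disjoint vertical segments at distinct abscissae; since $C_{r_n}$ lies on $\{\Re w=r_n\}$ within a strip of bounded height, it recedes to $\infty$ and $\diam_{\ComplexE}C_{r_n}\to0$; and $C_{r_n}$ separates $C_{r_0}$ from $C_{r_{n+1}}$, because $D_{r_n}\subset\{\Re w<r_n\}$ contains $C_{r_{n+1}}$ but is disjoint from $C_{r_0}$. This null-chain represents a prime end $P$, and the curve $x\mapsto x+iy_0(x)$ converges to $P$, as it lies in $D_{r_n}$ once $x<r_n$. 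Finally the impression satisfies $I(P)=\bigcap_n\closure{D_{r_n}}\subset\bigcap_n\big(\{\Re w\le r_n,\ \Im w_1\le\Im w\le\Im w_2\}\cup\{\infty\}\big)=\{\infty\}$, so $P$ is trivial (with $\infty\in I(P)$). The main obstacle is precisely the confinement claim of the middle step — that the tail component does not leak to the right past the crosscut $C_r$ — and it is exactly the translation-invariant graph structure $\Omega=\{x>\rho(y)\}$ that makes this true and its verification routine; everything else (the null-chain axioms and the impression computation) is then immediate.
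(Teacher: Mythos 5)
Your proposal is correct and takes essentially the same route as the paper: the paper's proof defines $Y(x)$ (your interval $(a(r),b(r))$) as the connected component of the vertical slice of $\Omega$ containing $y_0(x)$, builds the same null-chain of vertical crosscuts at abscissae $x_n=x_0-n$, identifies the two components of $\Omega\setminus C_n$ as $\Omega\cap G_n$ and $\Omega\setminus\closure{G_n}$ by the same sealing/box argument, and computes the impression as a subset of $\bigcap_n\closure{G_n}=\{\infty\}$. Your graph parametrization $\Omega=\{x+iy:\,x>\rho(y)\}$ is just a repackaging of the translation invariance that the paper uses directly, so the two arguments coincide in substance.
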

\begin{proof}
For each $x\le x_0$, define $Y(x)$ to be the connected component of
$\{y\in\Real:x+iy\in\Omega\}$ that contains the point~$y_0(x)$. Since $\Omega$
is invariant under translations~$w\mapsto w+t$, $t>0$, it follows that $Y(x)$
is bounded for all $x\le x_0$ and that $Y(x')\subset Y(x)$ whenever $x'\le x\le
x_0$. (To check the latter statement one has to take into account that $y_0$ is
continuous.) We claim that the intervals
$$C_n:=\{x+iy:x=x_n,\,y\in Y(x_n)\},\quad\text{where }x_n:=x_0-n,$$
form a null-chain in $\Omega$. Indeed, each $C_n$ is a cross-cut in~$\Omega$,
the closures of $C_n$'s are pairwise disjoint, $\diam_{\ComplexE}(C_n)\to0$ as
$n\to+\infty$. Moreover, recall that since $C_n$ is a cross-cut,
$\Omega\setminus C_n$ has exactly two components for each $n\in\Natural$.
Consider the set
$$G_n:=\{x+iy:\,x<x_n,\,y\in Y(x_n) \}.$$ It is open and $\partial
G_n\cap(\Omega\setminus C_n)=\emptyset$. Therefore, $\Omega\setminus
C_n=D_n\cup D_n'$, where $D_n:=\big(\Omega\setminus C_n\big)\cap G_n=\Omega\cap
G_n$ and $D'_n:=\Omega\setminus \closure{G_n}$ are both open and nonempty.
(Indeed, note that $\Gamma\cap G_n\neq\emptyset$, but $\Gamma\not\subset
\closure{G_n}$.) Hence, $D_n$ and $D'_n$ are the two connected components of
$\Omega\setminus C_n$. Clearly, $C_{n+1}\subset D_n$ and $C_{n-1}\subset D'_n$.
Thus $(C_n)$ is a null-chain and the impression of the prime end~$P$ defined by
this null-chain is~$I(P)=\bigcap_n \closure{D_n}\subset \bigcap_n
\closure{G_n}=\{\infty\}$.

Finally by construction,  $D_n$ contains $x+iy_0(x)$ for all $x<x_n$.
Therefore, $x\mapsto x+iy_0(x)$ converges to the prime end $P$ as
$x\to-\infty$. This completes the proof.
\end{proof}

\begin{corollary}\label{C_h-cont-at-rep}
At every boundary repelling\footnote{By a boundary repelling fixed point we
mean a boundary fixed point~$\sigma$ such that
$\phi_t'(\sigma)\in(1,+\infty)\cup\{\infty\}$ for all~$t>0$, \textsl{i.e.}, a
boundary fixed point other than the DW-point of~$(\phi_t)$.}
 fixed point~$\sigma$ of the semigroup~$(\phi_t)$ the function~$h$ has
unrestricted limit~$\infty$, with $\Re h(z)\to-\infty$ as $z\to\sigma$.
\end{corollary}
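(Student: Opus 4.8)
The plan is to reduce everything to the key construction in Lemma~\ref{LM_cross-cut-mimus-infty}. By the reduction via M\"obius transformations we may assume the DW-point is $\tau=1$, so that $h$ satisfies the Abel equation and $\Omega:=h(\UD)$ is invariant under the rightward translations $w\mapsto w+t$, $t\ge0$; equivalently, $\Complex\setminus\Omega$ is invariant under $w\mapsto w-s$, $s\ge0$. Let $\sigma$ be a boundary repelling fixed point, i.e.\ a boundary fixed point with $\sigma\neq\tau$. First I would record the radial behaviour of $h$ at $\sigma$. Since $\sigma$ is a fixed point, $h(r\sigma)\to\infty$ as $r\to1-0$ by Remark~\ref{RM_h-to-infty=fixed}; and since $\sigma\neq\tau$, Proposition~\ref{PR_Re_h} gives $\limsup_{z\to\sigma}\Re h(z)<+\infty$. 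Inspecting the proof of Proposition~\ref{PR_Re_h} (the half-plane model $H$ and the claim~\eqref{EQ_limImH}), the imaginary part $\Im h(r\sigma)$ tends to a finite limit $y_*$, whence necessarily $\Re h(r\sigma)\to-\infty$. Thus the radius $[0,\sigma)$ is mapped by $h$ onto a slit in $\Omega$ that runs to $\infty$ with $\Re\to-\infty$, $\Im\to y_*$, and, by Remark~\ref{RM_a.p.correspondence}, converges to the prime end $P$ of $\Omega$ corresponding to $\sigma$.

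Next I would manufacture a graph of the type required by Lemma~\ref{LM_cross-cut-mimus-infty} that converges to the \emph{same} prime end $P$. Using the radial asymptotics together with the rightward invariance of $\Omega$, one checks that for every sufficiently negative $x$ the horizontal slice $\{\,y:\ x+iy\in\Omega\,\}$ meets every neighbourhood of $y_*$, so one may pick a continuous $y_0(x)$ with $x+iy_0(x)\in\Omega$ and $y_0(x)\to y_*$ as $x\to-\infty$; joining this graph to the image of the radius by short near-horizontal segments in $\Omega$ shows the two slits are equivalent, hence $\Gamma:=\{x+iy_0(x):x\le x_0\}$ also converges to $P$. It then remains to supply the two ``blocking'' points $w_1,w_2\in\Complex\setminus\Omega$ with $\Re w_1,\Re w_2>x_0$ and $\Im w_1<y_0(x_0)<\Im w_2$. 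Here I would exploit that $\Complex\setminus\Omega$ is invariant under leftward translation: it is enough to find \emph{any} finite complement points at one height above $y_*$ and one below, for then shifting them to the left and choosing $x_0$ negative enough makes both $\Re w_i>x_0$ and $\Im w_1<y_0(x_0)<\Im w_2$ hold (recall $y_0(x_0)\to y_*$).

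The main obstacle is precisely the production of complement points on \emph{both} sides of the level $y_*$; this is the one place where the hypothesis $\sigma\neq\tau$ is essential. Geometrically, the approach to $P$ must be a channel to $\infty$ that is pinched from above and from below, and it is the confinement $\limsup_{z\to\sigma}\Re h<+\infty$ that forbids this channel from opening up into the unconfined $+\infty$ direction occupied by the DW-point: were the complement one-sided, the slit to $P$ would fail to be separated from a rightward ray $\{w_0+t:t\ge0\}$, which lands at $\tau$ by Remark~\ref{RM_slittoinfty}, forcing $P$ to coincide with the DW prime end and contradicting $\sigma\neq\tau$. I expect this separation step---turning ``repelling and $\neq$ DW'' into the two-sided boundedness of the slice component $Y(x)$---to require the most care, and I would carry it out through the prime-end/cluster-set dictionary for $\Omega$. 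Once $w_1,w_2$ are in hand, Lemma~\ref{LM_cross-cut-mimus-infty} applies and shows that $P$ is a \emph{trivial} prime end with impression $\{\infty\}$; since the cluster set of $h$ at $\sigma$ equals $I(P)=\{\infty\}$, the function $h$ has unrestricted limit $\infty$ at $\sigma$. Finally, reading the conclusion off the null-chain of the lemma gives the sharper statement: the component containing the tail of the graph satisfies $D_n\subset G_n=\{\Re w<x_n,\ \Im w\in Y(x_n)\}$ with $x_n\to-\infty$ and with $Y(x_n)$ bounded and nested decreasing, so convergence of $z\to\sigma$ to $P$ forces $\Re h(z)<x_n\to-\infty$ while $\Im h(z)$ stays in the bounded set $Y(x_0)$. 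Hence $\Re h(z)\to-\infty$ unrestrictedly, as claimed.
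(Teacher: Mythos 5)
Your overall strategy coincides with the paper's: reduce to the DW-point $\tau=1$, obtain the radial asymptotics at $\sigma$ (namely $h(r\sigma)\to\infty$ by Remark~\ref{RM_h-to-infty=fixed}, $\Im h(r\sigma)\to y_*$ finitely by~\eqref{EQ_limImH}, hence $\Re h(r\sigma)\to-\infty$ using Proposition~\ref{PR_Re_h}), produce the two blocking points $w_1,w_2\in\Complex\setminus\Omega$ from the fact that otherwise the slit would be equivalent to a rightward ray $R_0$ landing at $\tau$ (Remarks~\ref{RM_a.p.correspondence} and~\ref{RM_slittoinfty}), apply Lemma~\ref{LM_cross-cut-mimus-infty} to conclude that the prime end corresponding to $\sigma$ is trivial, and read off $\Re h\to-\infty$ from the null-chain $D_n\subset G_n$ with $x_n\to-\infty$. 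All of these steps match the paper's proof.

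The genuine gap is in your middle step, where you ``manufacture a graph''. From the pointwise fact that every sufficiently far-left vertical slice of $\Omega$ contains points at heights near $y_*$, you assert that ``one may pick a continuous $y_0(x)$'' with graph in $\Omega$ and $y_0(x)\to y_*$, and then that this graph can be joined to the radius image by ``short near-horizontal segments'', making the two slits equivalent. Neither assertion is justified, and both founder on the same obstruction: the invariance of $\Omega$ is purely \emph{horizontal}, so moving \emph{vertically} between two admissible heights --- even heights arbitrarily close to each other and to $y_*$ --- may leave $\Omega$ (the admissible heights over nearby abscissas can lie in different ``channels'' separated by boundary slits of $\Omega$, as in Example~\ref{EX1}). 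Thus pointwise non-emptiness of the slices gives no continuous selection, and closeness of heights gives no connecting arc in $\Omega$; indeed, showing that a single channel runs to $x=-\infty$ at height tending to $y_*$ is essentially the content of what must be proved. The paper closes exactly this hole with Remark~\ref{RM_H}: since $\Re H'>0$ in the half-plane model, the function $S(r):=\Re h(r\sigma)$ is strictly decreasing, so the radius image $h([0,\sigma))$ is \emph{itself} a graph $\{x+iy_0(x):\,x\le S(0)\}$, and Lemma~\ref{LM_cross-cut-mimus-infty} applies to it directly, with no auxiliary curve and no equivalence argument. (Your alternative derivation of $\Re h(r\sigma)\to-\infty$ without monotonicity is fine as far as it goes, but the monotonicity is still indispensable for the graph property, so Remark~\ref{RM_H} cannot be bypassed.) Once this is repaired, the rest of your outline --- including the separation step you flag, which the paper settles by the non-equivalence of $h([0,\sigma))$ and $R_0$ since their preimages land at $\sigma\neq\tau$ --- goes through as in the paper.
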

\begin{proof}
Using M\"obus transformations of~$\UD$ fixing $\tau=1$, we may assume
that~$\sigma=-1$. Consider the function $S(r):=\Re h(-r)$, $r\in[0,1)$. Using
Remark~\ref{RM_H}, one can easily show that $S$ is monotonically decreasing.
Moreover, by~\eqref{EQ_limImH} in the proof of Proposition~\ref{PR_Re_h}, $\Im
h(-r)$ has a finite limit as~$r\to1-0$. At the same time, as we mentioned
in~Remark~\ref{RM_h-to-infty=fixed}, $h(-r)\to\infty$ as~$r\to1-0$. Taking into
account that $S$ is decreasing, we conclude that $S(r)\to-\infty$ as~$r\to1-0$
and consequently $J:=S\big([0,1)\big)=(-\infty,S(0)]$. Therefore, the curve
$\Gamma:=h\big((-1,0]\big)\subset\Omega$ is the graph of the function
$$J\ni x\mapsto y_0(x):=\Im h\big(-S^{-1}(x)\big).$$

Note that there exist $w_1,w_2\in\Complex\setminus\Omega$ with $\Im
w_1<y_0(x_0)<\Im w_2$ for some $x_0\le S(0)$. Otherwise, $\Gamma$, as a slit
in~$\Omega$, would be equivalent to $R_0:=\{t+h(0):t\ge0\}$. But the landing
point of $h^{-1}(R_0)$ is the DW-point~$\tau=1$, see
Remark~\ref{RM_slittoinfty}, while the landing point of $h^{-1}(\Gamma)$
is~$\sigma=-1$ by construction.

Thus we can apply
Lemma~\ref{LM_cross-cut-mimus-infty} to conclude that $h(-r)$ converges, as
$[0,1)\ni r\to1$, to a trivial prime end~$P$ of~$\Omega$. This means
(see,\,\textsl{e.g.},\,\cite[Chapter~9,\,\S4]{ClusterSets}) that the point
$\sigma=-1$ corresponds under~$h$ to this trivial prime end, and hence the
limit set of~$h(z)$ as $\UD\ni z\to-1$ is the singleton~$I(P)=\{\infty\}$.
Since for the null-chain~$(C_n)$ defining the prime end~$P$ that we have
constructed in the proof of~Lemma~\ref{LM_cross-cut-mimus-infty} one has
$\sup_{w\in D_n}\Re w\to-\infty$ as $n\to+\infty$, we may conclude that the
unrestricted limit of $\Re h$ at~$\sigma$ also exists and equals~$-\infty$.
\end{proof}

\begin{lemma}\label{LM_prime-end-nonDW}
Let $P$ be a non-trivial prime end of $\Omega:=h(\UD)$. Then $P$ corresponds
under the mapping~$h$ to the DW-point~$\tau$ if and only if
\begin{equation}\label{EQ_Re_unb}
\sup\{\Re w:\,w\in I(P)\cap\Complex\}=+\infty.
\end{equation}
\end{lemma}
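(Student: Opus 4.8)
The plan is to prove both implications by analyzing the geometry of the null-chains representing the prime end $P$, using the translational invariance of $\Omega$ (the fact that $w\mapsto w+t$ maps $\Omega$ into itself for $t>0$). Let $(C_n)$ be a null-chain representing $P$, with $D_n$ the connected component of $\Omega\setminus C_n$ containing $C_{n+1}$. The key observation throughout will be that the slit $R_0:=\{w_0+t:t>0\}$ from Remark~\ref{RM_slittoinfty} lands under $h^{-1}$ at the DW-point $\tau$, so $P$ corresponds to $\tau$ precisely when it is "the same prime end as the one approached along rightward-pointing rays."

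\textbf{The easy direction ($\Leftarrow$, or its contrapositive).} First I would show that if $P$ corresponds to $\tau$, then \eqref{EQ_Re_unb} holds. Since $\tau$ is a boundary fixed point and $h^{-1}(R_0)$ lands at $\tau$, the slit $R_0$ converges to the prime end $P$. By Definition~\ref{D_conv_prime-end}, for each $n$ the tail of $R_0$ eventually lies in $D_n$; but $R_0$ has points with arbitrarily large real part, so $\sup\{\Re w:w\in D_n\}=+\infty$ for every $n$. Since $I(P)=\bigcap_n\overline{D_n}$ and the sets $D_n$ all contain the unbounded-to-the-right tail of $R_0$, taking closures and intersecting shows the impression contains points (or limit points) of unbounded real part, giving \eqref{EQ_Re_unb}. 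Here I must be slightly careful: the supremum is over $I(P)\cap\Complex$, so I would verify that the unbounded real parts are realized by finite points in the impression, not merely by the point $\infty\in\partial_{\ComplexE}\Omega$; the nontriviality of $P$ is what guarantees $I(P)$ is not just $\{\infty\}$.

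\textbf{The harder direction ($\Rightarrow$).} Conversely, assume \eqref{EQ_Re_unb} holds and suppose toward a contradiction that $P$ corresponds under $h$ to some repelling boundary fixed point $\sigma\neq\tau$, or to a boundary point that is not a fixed point. By Corollary~\ref{C_h-cont-at-rep}, at every repelling fixed point $h$ has the finite-in-real-part behaviour $\Re h(z)\to-\infty$, so the corresponding prime end is trivial with impression $\{\infty\}$ and is approached through regions where $\Re w\to-\infty$; this is incompatible with \eqref{EQ_Re_unb}. The remaining possibility — $P$ corresponds to a point $\sigma\in\UC$ that is not a boundary fixed point — is where the main work lies: here the angular limit of $h$ at $\sigma$ is finite by Proposition~\ref{PR_Re_h} and Remark~\ref{RM_h-to-infty=fixed}, so $I(P)$ is a bounded subset of $\Complex$ and \eqref{EQ_Re_unb} fails. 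Assembling these cases, \eqref{EQ_Re_unb} forces $P$ to correspond to $\tau$.

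\textbf{Main obstacle.} I expect the delicate point to be controlling the \emph{finite} points of the impression. The condition \eqref{EQ_Re_unb} refers to $I(P)\cap\Complex$, and because $\infty$ always sits in $\partial\Omega$ one must distinguish genuine finite boundary behaviour with large real part from the trivial appearance of $\infty$. I would handle this by constructing, from a sequence $w_k\in I(P)$ with $\Re w_k\to+\infty$, explicit cross-cuts $C_n$ exploiting the invariance $\Omega+t\subset\Omega$, much as in Lemma~\ref{LM_cross-cut-mimus-infty} but oriented toward $+\infty$ rather than $-\infty$. Tying these back to the distinguished slit $R_0$ (and hence to $\tau$) via the equivalence criterion for slits in Remark~\ref{RM_a.p.correspondence} should close the argument.
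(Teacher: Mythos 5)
Your proposal has a genuine gap in the direction ``$P$ corresponds to $\tau$ $\Rightarrow$ \eqref{EQ_Re_unb}'', and it is precisely the crux of the lemma. Your argument via the slit $R_0$ converging to $P$ shows only that, for each $n$, a tail $\{w_0+t:\,t>t_n\}$ of $R_0$ lies in $D_n$. But the points of $R_0$ are interior points of $\Omega$, hence never belong to $I(P)\subset\partial\Omega$, and the tails escape to $\infty$ as $n\to+\infty$; so intersecting the closures $\closure{D_n}$ extracts from them only the point $\infty$ --- which lies in $I(P)$ automatically and says nothing about $I(P)\cap\Complex$. You flag exactly this issue, but your proposed fix (``nontriviality of $P$ guarantees $I(P)\neq\{\infty\}$'') yields a \emph{single} finite impression point with no control on its real part, whereas \eqref{EQ_Re_unb} requires finite impression points of \emph{arbitrarily large} real part. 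The paper's device for producing them is absent from your proposal: pick $w_0\in I(P)\cap\Complex$ and $z_n\to\tau$ with $w_n:=h(z_n)\to w_0$, set $\zeta_n:=\phi_n(0)$, so $h(\zeta_n)=h(0)+n$; the segments $[z_n,\zeta_n]$ shrink to $\tau$, hence the limit set of the connected sets $\Gamma_n:=h([z_n,\zeta_n])$ lies in $I(P)$, and since $\Gamma_n$ joins $w_n$ to a point of real part $\Re h(0)+n\to+\infty$, connectedness gives $\omega_n\in\Gamma_n$ with $\Re\omega_n\to x$ for any prescribed $x>\Re w_0$; limit points of $(\omega_n)$ are then points of $I(P)$ with real part $x$. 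Without an argument of this kind (or a genuinely workable substitute), this implication is not proved.

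The converse direction in your proposal also contains a false step. From finiteness of the angular limit of $h$ at a non-fixed point $\sigma$ you conclude that ``$I(P)$ is a bounded subset of $\Complex$''. This implication is not valid: the unrestricted cluster set can be unbounded even when the angular limit is finite --- the paper's own analysis (Step~2 of the proof of Proposition~\ref{Pr_repelling_iff}, and Example~\ref{EX_contact}) exhibits impressions of the form $\{w_0-x:\,x\ge0\}\cup\{\infty\}$, a ray pointing to the left. What rescues the conclusion is not boundedness but the one-sided bound of Proposition~\ref{PR_Re_h}: for $\sigma\neq\tau$ the \emph{unrestricted} upper limit of $\Re h(z)$ as $z\to\sigma$ is finite (inequality~\eqref{EQ_Re-h-to_-infty}), and since $I(P)$ is the cluster set of $h$ at $\sigma$, every finite point of $I(P)$ has real part bounded above by that limit, so \eqref{EQ_Re_unb} fails. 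This is in fact the paper's entire (one-line) proof of this implication; note also that your repelling-fixed-point case is vacuous, since by Corollary~\ref{C_h-cont-at-rep} a non-trivial prime end cannot correspond to such a point.
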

\begin{proof}
Denote by $\sigma$ the unique point on~$\UC$ that corresponds to the prime
end~$P$ under the mapping~$h$.

The fact that if~\eqref{EQ_Re_unb} holds, then $\sigma=\tau$, follows readily
from Proposition~\ref{PR_Re_h}, because $I(P)$ is the limit set of~$h(z)$ as
$\UD\ni z\to\sigma$, see \textsl{e.g.} \cite[Theorem 9.4,
p.\,173]{ClusterSets}.

Now we prove the converse statement. So let us assume that $\sigma=\tau$. We
have to show that~\eqref{EQ_Re_unb} takes place. Choose any point $w_0\in
I(P)\cap\Complex$. Then there exists a sequence $(z_n)\subset\UD$ converging
to~$\tau$ such that $w_n:=h(z_n)$ converges to~$w_0$. Consider another sequence
$(\zeta_n)\subset\UD$ converging to~$\tau$ defined by $\zeta_n:=\phi_n(0)$ for
all~$n\in\Natural$. Then the segments $[z_n,\zeta_n]$ also converge to~$\tau$
and hence the limit set of the
sequence~$(\Gamma_n):=\big(h([z_n,\zeta_n])\big)$ is a subset of $I(P)$. On the
one hand, $w_n\in\Gamma_n$ for all $n\in\Natural$ and tends to~$w_0$ as
$n\to+\infty$. On the other hand, $h(0)+n=h(\zeta_n)\in\Gamma_n$ for all
$n\in\Natural$. It follows that for any $x>\Re w_0$ there exists a
sequence~$\omega_n\in\Gamma_n$ such that $\Re \omega_n\to x$. Let $\xi_x$ be
any limit point of~$(\omega_n)$. Then $\xi_x\in I(P)$ and $\Re \xi_x=x$. This
implies~\eqref{EQ_Re_unb} and thus the proof is complete.
\end{proof}

\begin{lemma}\label{LM_absorbing}
The domain $\Omega$ is absorbing for the domain $$\mathcal
D:=\{w\in\Complex:\exists\, \omega\in\Omega~\text{\rm such that}~\Im
(w-\omega)=0\}$$ w.r.t. the action of the translation semigroup ${(w\mapsto
w+t)_{t\ge0}}\subset\mathrm{Aut}(\mathcal D)$, which means that for any
$K\subset\subset\mathcal D$ there exists $t\ge0$ such that $K+t\subset\Omega$.
\end{lemma}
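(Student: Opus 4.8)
The plan is to make the geometry of $\Omega$ completely explicit and thereby reduce the absorbing property to a one–dimensional upper–boundedness statement. First I would identify $\mathcal D$ concretely. Since $\Im(w-\omega)=\Im w-\Im\omega$, a point $w$ lies in $\mathcal D$ precisely when $\Im w$ belongs to the projection $P:=\{\Im\omega:\omega\in\Omega\}$. Because $\Omega$ is open and connected and $w\mapsto\Im w$ is a continuous open map, $P$ is an open interval $(a,b)$ with $a\in[-\infty,+\infty)$ and $b\in(-\infty,+\infty]$; hence $\mathcal D=\{w\in\Complex:\ a<\Im w<b\}$ is a horizontal strip, which is manifestly invariant under every real translation, so indeed $(w\mapsto w+t)_{t\ge0}\subset\mathrm{Aut}(\mathcal D)$.

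Next I would describe the horizontal slices of $\Omega$. From the Abel equation $h\circ\phi_t=h+t$ we get $\Omega+t=h(\phi_t(\UD))\subset\Omega$ for every $t\ge0$, i.e. $\Omega$ is invariant under the forward translation semigroup. Fixing $y\in(a,b)$ and setting $I_y:=\{x\in\Real:\ x+iy\in\Omega\}$, the set $I_y$ is open, nonempty, and closed under adding nonnegative reals; an open nonempty subset of $\Real$ with this last property is necessarily a ray, so $I_y=(x_y,+\infty)$ for a unique $x_y\in[-\infty,+\infty)$. Thus $\Omega=\{x+iy:\ a<y<b,\ x>x_y\}$, and in particular $x+iy\in\Omega$ if and only if $y\in(a,b)$ and $x>x_y$.

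The crux is that $y\mapsto x_y$ is bounded above on compact subintervals of $(a,b)$, and this is exactly where openness of $\Omega$ enters. Given $y_*\in(a,b)$, I would pick $\omega_*\in\Omega$ with $\Im\omega_*=y_*$ and a disk $B(\omega_*,r)\subset\Omega$; then for every $y$ with $|y-y_*|<r$ the point $\Re\omega_*+iy$ lies in $B(\omega_*,r)\subset\Omega$, so $x_y<\Re\omega_*$. Hence $y\mapsto x_y$ is locally bounded above, and extracting a finite subcover of any $[c,d]\subset\subset(a,b)$ produces a finite bound $\xi_0$ with $x_y\le\xi_0$ for all $y\in[c,d]$.

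Finally I would assemble the absorbing property. Let $K\subset\subset\mathcal D$. Compactness gives $M>0$ with $\Re w\ge -M$ on $K$, and a compact interval $[c,d]\subset(a,b)$ containing $\Im(K)$; let $\xi_0$ be the corresponding bound from the previous step. Choosing any $t>\xi_0+M$, for each $w=x+iy\in K$ we have $y\in(a,b)$ and $x+t\ge -M+t>\xi_0\ge x_y$, so $(x+t)+iy\in\Omega$ by the slice description; that is, $K+t\subset\Omega$. The only genuinely nontrivial point is the local upper boundedness of $y\mapsto x_y$; everything else is bookkeeping about the horizontal strip and the ray structure of the slices.
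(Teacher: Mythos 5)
Your proof is correct and follows essentially the same route as the paper: both arguments reduce the claim to showing that for the compact projection $\Im(K)$ there is a single abscissa beyond which every point with ordinate in $\Im(K)$ lies in $\Omega$, using translation invariance of $\Omega$, its openness, and compactness. The paper phrases this via the nested vertical slices $J_x=\{y:\,x+iy\in\Omega\}$ forming an increasing open cover of $\Im(K)$, while you phrase it via local upper bounds on the left endpoints $x_y$ of the horizontal slices; since $y\in J_x$ if and only if $x>x_y$, the two formulations are the same argument in transposed notation.
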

\begin{proof}
For $x\in\Real$, denote $J_x:=\{y\in\Real:x+iy\in\Omega\}$ and let
$J:=\bigcup_{x\in\Real} J_x$. In particular, $J$ is an open set in~$\Real$.
Note that, in fact, $J=\{\Im \omega:\,\omega\in\Omega\}$ and hence $J$ is
connected. By construction, $\mathcal D=\{w\in\Complex:\Im w\in J\}$. In
particular, $\mathcal D$ is a domain. If now $K$ is a compact subset
of~$\mathcal D$, then $K_1:=\{\Im w:\,w\in K\}$ is a compact subset of $J$ and
the family $(J_x)$ is an open cover for~$K_1$. Moreover, by the invariance of
$\Omega$ under translations $w\mapsto w+t$, $t\ge0$, we have $J_{x_1}\subset
J_{x_2}$ whenever $x_1\le x_2$. Hence there exists $x_0\in\Real$ such that
$K_1\subset J_{x_0}$. Moreover, since $K$ is compact, $\inf_{w\in K} \Re
w=:x_*\in\Real$. It follows that $K+(x_0-x_*)\subset\Omega$. The proof is now
complete.
\end{proof}
\begin{remark}\label{RM_mathcalD}
Note that the domain $\mathcal D$ defined above is either a horizontal strip,
or a half-plane whose boundary is parallel to~$\Real$, or the whole
plane~$\Complex$. Note that in the first case, $(\phi_t)$ is of hyperbolic
type, while in the second and the third cases it is of parabolic type,
see~\cite[p.\,256--257]{AnaFlows}.
\end{remark}

\begin{proposition}\label{PR_impressions}The following statements hold.
\begin{mylist}
\item[(A)]
Let $P$ be any prime end of $\Omega:=h(\UD)$. Then $I(P)\cap\Complex$ is
contained in the union of two straight lines parallel to the real axis.
\item[(B)]
 If in
addition,
\begin{equation}\label{EQ_Re-bounded}
\sup\{\Re w:\,w\in I(P)\cap\Complex\}<+\infty,
\end{equation}
then $I(P)\cap\Complex$ is contained on one straight line parallel to the real
axis.
\end{mylist}
\end{proposition}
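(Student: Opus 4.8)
The plan is to first record the geometric consequence of translation invariance, then to settle part~(B) by a direct computation with the Herglotz--Nevanlinna representation already at hand, and finally to deduce part~(A), the only genuinely new point being the behaviour at the prime end corresponding to the Denjoy--Wolff point. Since $\mathcal L_t(\Omega)\subset\Omega$ amounts to $\Omega+t\subset\Omega$ for $t\ge0$, each horizontal slice $\{x:x+iy\in\Omega\}$ is an open ray, so $\Omega=\{x+iy:\ y\in J,\ x>a(y)\}$ for an open interval $J=\{\Im w:w\in\Omega\}$ and a function $a\colon J\to[-\infty,+\infty)$, and $\mathcal D=\{w:\Im w\in J\}$ is the strip, half-plane, or plane of Remark~\ref{RM_mathcalD}. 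In particular $\partial_\Complex\Omega\subset\overline{\mathcal D}$, and the two lines occurring in (A) will be exactly the finite components of $\partial_\Complex\mathcal D$, namely $\{\Im w=\inf J\}$ and $\{\Im w=\sup J\}$.

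For (B) I would argue as follows. If $I(P)\cap\Complex=\emptyset$ there is nothing to prove, so assume it is nonempty; then $P$ is not the Denjoy--Wolff prime end (Lemma~\ref{LM_prime-end-nonDW}), and since a finite cluster value excludes $\Re h\to-\infty$, the corresponding $\sigma\in\UC$ is not a repelling fixed point either (Corollary~\ref{C_h-cont-at-rep}); thus $\sigma\neq\tau$ and Remark~\ref{RM_H} applies, giving $H:=h\circ F_\sigma^{-1}$ on $\UH_1$ with $\Re H'>0$ (unless $h$ is linear-fractional, in which case $\Omega$ is a Jordan domain on the sphere, all impressions are singletons, and (B) is immediate). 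Because $I(P)\cap\Complex$ is the finite part of the cluster set $C(H,0)$, it suffices to show that $\Im H$ has an unrestricted limit as $\UH_1\ni z\to0$. I would insert formula~\eqref{EQ_ReprH} of Lemma~\ref{LM_HRe_prime_positive} and pass to the limit under the integral exactly as in Step~1 of Proposition~\ref{PR_Re_h}: the atom $\mu(\{0\})$ must vanish (else $\Re H\to-\infty$ and $I(P)\cap\Complex=\emptyset$), while for $z=x+iy$ the elementary bound $\arg(z+it)\in(-\pi/2,\pi/2)$, valid since $\Re z>0$, makes the imaginary part of the integrand uniformly bounded near $0$; Lebesgue's theorem then gives the limit. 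Hence $\Im w$ is constant on $I(P)\cap\Complex$, i.e.\ it lies on one horizontal line.

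For (A), if $\sup\{\Re w:w\in I(P)\cap\Complex\}<\infty$ then (B) already places $I(P)\cap\Complex$ on a single line. Otherwise, by Lemma~\ref{LM_prime-end-nonDW}, either $P$ is trivial with $I(P)=\{\infty\}$, or $P$ is the Denjoy--Wolff prime end; in the latter case I must show $I(P)\cap\Complex\subset\{\Im w=\inf J\}\cup\{\Im w=\sup J\}$. By the absorbing property (Lemma~\ref{LM_absorbing}) together with forward invariance, $\Omega$ coincides with $\mathcal D$ on some half-plane $\{\Re w>x_*\}$, so every boundary point with $\Re w>x_*$ already lies on a wall, and it remains to exclude finite impression points at an interior height $y_0\in J$. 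The idea is that at an interior height the full ray $\{x+iy_0:x\ge a(y_0)\}$ cannot be crossed inside $\Omega$ and therefore \emph{separates} $\Omega$ into its parts above and below $y_0$; equivalently, $h^{-1}$ of this ray is a cross-cut of $\UD$ joining $\tau$ (the $\infty$-end, by Remark~\ref{RM_slittoinfty}) to a point $\sigma_0\in\UC$ at which $h$ has the finite asymptotic value $a(y_0)+iy_0$. Since along this cross-cut the finite value is attained only at the $\sigma_0$-end while $h\to\infty$ at the $\tau$-end, a finite interior-height point would belong to $I(P_{\sigma_0})$ with $\sigma_0\neq\tau$ and, by Lemma~\ref{LM_prime-end-nonDW}, not to that of the Denjoy--Wolff prime end, which is the contradiction.

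I expect this last step to be the main obstacle: separating the Denjoy--Wolff prime end from the interior-height boundary features rigorously requires the correspondence between equivalence classes of slits and cross-cuts and points of $\UC$ (Remark~\ref{RM_a.p.correspondence}), rather than the soft topological picture above. Concretely one must verify that $\sigma_0\neq\tau$ and that no sequence $z_k\to\tau$ with $h(z_k)\to a(y_0)+iy_0$ can exist. The cleanest route I foresee combines the separation of $\Omega$ by the interior ray with the absorbing property: translating a small ball around $h(z_k)$ forward by a fixed $T$ lands it in a fixed compact subset of $\Omega$, forcing $\phi_T(z_k)$ into a compact subset of $\UD$; reconciling this with $z_k\to\tau$, and with the one-sided (upper or lower) location of the Denjoy--Wolff null-chain relative to the separating cross-cut, is where the remaining work concentrates.
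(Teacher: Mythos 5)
Your part~(A) is where the genuine gap lies, and it is twofold. First, the claim that ``$\Omega$ coincides with $\mathcal D$ on some half-plane $\{\Re w>x_*\}$'' is false: Lemma~\ref{LM_absorbing} only says that each compact $K\subset\subset\mathcal D$ is absorbed after a shift $t(K)$, and $t(K)$ in general tends to $+\infty$ as $K$ exhausts $\mathcal D$. The paper's own Example~\ref{EX1} is a counterexample: there $\mathcal D=\{|\Im w|<1\}$, while every half-plane $\{\Re w>x_*\}$ contains the tips $n+i(1-1/n)\notin\Omega$ of the slits $I_n'$ for all $n>x_*$; in particular your corollary ``every boundary point with $\Re w>x_*$ lies on a wall'' also fails, since those tips sit at interior heights. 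Second, the step you yourself flag as the remaining obstacle --- excluding finite impression points of the Denjoy--Wolff prime end at interior heights of $J$ --- is precisely the hard part, and it is left unproved. The statement you are aiming at (DW impression contained in the two walls of $\mathcal D$) appears to be true, but it is strictly stronger than what (A) asserts, and proving it essentially requires constructing and analysing a null-chain for the DW prime end as in Propositions~\ref{PR_hyperbol} and~\ref{PR_phi_cont_DW}. The paper sidesteps all of this: it never identifies the lines and never treats the DW prime end separately. It assumes three impression points $w_1,w_2,w_3$ at three distinct heights, picks intermediate heights $y_1\in(\Im w_1,\Im w_2)$, $y_2\in(\Im w_2,\Im w_3)$, forms a cross-cut $\Gamma$ of $\Omega$ from the two horizontal rays $L^+_{y_1},L^+_{y_2}$ joined by a vertical segment (translation invariance plus Lemma~\ref{LM_absorbing}), arranges --- perturbing $y_1,y_2$ if necessary, since slits with different landing points cannot converge to the same prime end --- that the two ends of $\Gamma$ land at points $\sigma_1\neq\sigma_2$ both different from $\sigma$, and then gets a contradiction because $w_1$ and $w_2$ lie on the boundaries of different components of $\Omega\setminus\Gamma$ while both belong to the cluster set at $\sigma$. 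That argument is insensitive to whether $\sigma=\tau$, which is exactly the flexibility your case split lacks.

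Your part~(B), by contrast, is a genuinely different route from the paper's (which uses one horizontal ray, whose preimage lands at $\tau$ and at one other point, together with Lemma~\ref{LM_prime-end-nonDW} to ensure $\sigma\neq\tau$ under~\eqref{EQ_Re-bounded}); you instead upgrade the radial computation of Step~1 of Proposition~\ref{PR_Re_h} to an unrestricted limit of $\Im H$ via~\eqref{EQ_ReprH}, and this is non-circular and in effect re-proves part of Proposition~\ref{Pr_repelling_iff} analytically. But the domination step is not justified as written: the bound $\arg(z+it)\in(-\pi/2,\pi/2)$ only yields $\bigl|(1+t^2)\bigl(\arg(z+it)-\arg(1+it)\bigr)\bigr|\le\pi(1+t^2)$, which need not be $\mu$-integrable ($\mu$ is finite, but $\int(1+t^2)\,d\mu$ may diverge). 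The true uniform bound comes from the cancellation between $t\,\Re(z-1)$ and $(1+t^2)\bigl(\arg(z+it)-\arg(1+it)\bigr)$ for large $|t|$ --- the complex analogue of the paper's estimate on $B_x(t)$ --- and an analogous uniform \emph{upper} bound on the real part of the integrand is what your ``atom at $0$'' argument needs (the remaining integral must not drift to $-\infty$ faster, nor offset $m\log|z|$ from above). These estimates do hold, so (B) is fixable; but as stated they are asserted rather than proved, and they are the actual content of that step.
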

\begin{proof}
Suppose on the contrary to~(A) that there exist $w_j\in I(P)\cap\Complex$,
$j=1,2,3$, such that $\Im w_1<\Im w_2<\Im w_3$. Denote by $\sigma$ the unique
point on~$\UC$ that corresponds to the prime end~$P$ under the mapping~$h$. We
proceed by constructing two slits in~$\Omega$ in the following way.

Fix any $y_1\in(\Im w_1, \Im w_2)$. The line $L_{y_1}:=\{w\in\Complex:\Im
w=y_1\}$ intersects $\Omega$, because $\Omega$ is connected and
$w_1,w_2\in\partial_\Complex\Omega$. Recall also that $\Omega$ is invariant
w.r.t. the translations $w\mapsto w+t$, $t>0$. It follows that, either
$L_y\subset\Omega$, and in this case we set $w_{y_1}:=\infty$,
$L^+_{y_1}:=L_{y_1}$, or there exists a point $w_{y_1}\in
L_{y_1}\cap\partial\Omega$ such that $$L_{y_1}^+:=\{w\in L_{y_1}:\Re w>\Re
w_{y_1}\}\subset\Omega\quad \text{and}\quad L_{y_1}^-:=L_{y_1}\setminus
L_{y_1}^+\subset\Complex\setminus\Omega.$$

Now fix $y_2\in(\Im w_2, \Im w_3)$ and construct in the same way $w_{y_2}$ and
$L_{y_2}$. By Lemma~\ref{LM_absorbing} there exists $x_0\in\Real$ such that the
segment $\Gamma_0:=[x_0+iy_1,x_0+iy_2]$ lies in~$\Omega$. Let
$\Gamma_j:=L_{y_j}^+\cap\{w:\Re w<x_0\}$, $j=1,2$. The union
$$\Gamma:=\bigcup_{j=0,1,2}\Gamma_j$$
is a cross-cut in~$\Omega$. Indeed, it is easy to see that the limits
$$\sigma_j:=\lim_{L_{y_j}^+\ni w\to w_j}h^{-1}(w),\quad j=1,2,$$ are different,
$\sigma_1\neq\sigma_2$, because the slits $\Gamma_1$ and $\Gamma_2$ are not
equivalent: otherwise we would have $w_{y_1}=w_{y_2}=\infty$ and
$\{w\in\Complex:\Im w\in[y_1,y_2]\}\subset\Omega$, which contradicts the
construction.

Moreover, we may assume $\sigma\not\in\{\sigma_1,\sigma_2\}$. Indeed, fix $j=1$ or $j=2$. If
$w_{y_j}=\infty$, then by Lemma~\ref{LM_cross-cut-mimus-infty}, the slit $\Gamma_j$ converges
to a trivial prime end, but $I(P)$ by the hypothesis contains more than one point. Hence in
this case $\sigma_j\neq\sigma$. If $w_{y_j}\neq\infty$ we {\sl a priori} may have the
situation that $\sigma_j=\sigma$. This would mean that the slit $\Gamma_j$ converges to $P$. If this happens choose in the above argument another value of $y_j\in(\Im w_{j},\Im w_{j+1})$.
Then the landing point $w_{y_j}$  will change and, since no two slits with different landing
points can converge to the same prime end~\cite[Theorem 9.7, p.\,177]{ClusterSets}, we now
meet the desired condition $\sigma_j\neq \sigma$.

Using the argument from the proof of Lemma~\ref{LM_cross-cut-mimus-infty}, one
can conclude that the connected components of $\Omega\setminus\Gamma$ are
$\Omega_1:=\Omega\setminus\closure G$ and $\Omega_2:=\Omega\cap G$, where
$$G:=\{w\in\Complex:\,\Re w<x_0,\,y_1<\Im w<y_2\}.$$ Clearly,
$w_1\in \partial\Omega_1\setminus\partial\Omega_2$ and
$w_2\in\partial\Omega_2\setminus\partial\Omega_1$. Recall that both $w_1$ and
$w_2$ belong to~$I(P)$, \textsl{i.e}., to the limit set of $h(z)$ as~$\UD\ni
z\to\sigma$. Therefore, $\sigma\in\partial h^{-1}(\Omega_1)\cap\partial
h^{-1}(\Omega_2)$. The latter means that $\sigma\in\{\sigma_1,\sigma_2\}$. This
contradicts the construction and thus proves part~(A) of the proposition.

To prove~(B) we use similar ideas. Assume on the contrary that the statement
does not hold. Then there exist ${w_1,w_2\in I(P)}$ with $\Im w_1<\Im w_2$. Fix
any $y_1\in(\Im w_1,\Im w_2)$ and let $L_{y_1}^+$ be constructed as above. Its
preimage $h^{-1}(L^+_{y_1})$ lands on the unit circle at two points, the
DW-point~$\tau$ and another point~$\varsigma\in\UC\setminus\{\tau\}$. As above,
we may assume that $\sigma\neq\varsigma$. Moreover, by
Lemma~\eqref{LM_prime-end-nonDW}, $\sigma\neq\tau$. This leads to a
contradiction in a similar way as in the proof of~(A) and thus shows that
statement (B) is also true.
\end{proof}

\begin{remark}
In fact we can also prove that if the prime end $P$ corresponds to the DW-point
and $I(P)\cap\Complex$ is not contained in one line, then for both lines, $L_1$
and $L_2$ whose union contains~$E$, we have $\sup\{\Re w:\,w\in I(P)\cap
L_j\}=+\infty$.
\end{remark}

\begin{proof}[\bfit{Proof of Proposition~\ref{Pr_repelling_iff}}]We divide the
proof into several steps. \Step1{We first prove the equivalence of (i), (ii)
and (iii).} Trivially (iii)$\Longrightarrow$(ii). Moreover, by
Corollary~\ref{C_h-cont-at-rep}, (i) implies (iii). It remains to show that
(ii) implies~(i). As we have already mentioned, if $h(r\sigma)\to\infty$ as
$r\to1-0$, then $\sigma$ is a boundary fixed point. Indeed, in this case for
each $t\ge0$, $\Gamma:=h([0,\sigma))$ and $\Gamma+t$ are two equivalent slits
in~$\Omega$. Therefore, by \cite[Theorem~1 in Chapter~2, \S3]{Goluzin},
$h^{-1}(\Gamma+t)$ is a slit in~$\UD$ landing at~$\sigma$, \textsl{i.e.}
$\phi_t(\sigma)=\lim_{r\to1-0}h^{-1}\big(h(r\sigma)+t\big)=\sigma$. So we only
have to show that if additionally $\Re h(r\sigma)\to-\infty$ as $r\to1-0$, then
$\sigma\neq\tau$.

Suppose on the contrary that (ii) holds and that~$\sigma$ coincides with the
DW-point~$\tau$ of~$(\phi_t)$. According Remark~\ref{RM_H} applied
with~$\sigma_0:=-1$ substituted for~$\sigma$, this means that the function
$H\in\Hol(\UH_1,\Complex)$ defined in Remark~\ref{RM_H} satisfy
$H(x)\to-\infty$ as $\Real\ni x\to+\infty$. But this is not possible, because
$\Re H'>0$ in~$\UH_1$. Thus, $\sigma\neq\tau$.

\Step2{Let us now pass to the proof of the statement concerning the limits of
$\Im h$. Assume first that  $\sigma$ is not a boundary fixed point (and in
particular does not coincide with the DW-point~$\tau$). We will show that in
this case the unrestricted limit of~$\Im h$ at~$\sigma$ exists finitely.} If
the impression~$I(P)$ of the prime end $P$ that corresponds under the map~$h$
to the point~$\sigma$, does not contain the point~$\infty$, then from
Proposition~\ref{PR_impressions} and Lemma~\ref{LM_prime-end-nonDW} it follows
that $I$ is a closed interval on a straight line parallel to~$\mathbb R$ and
hence the unrestricted limit $\lim_{\UD\ni z\to\sigma}\Im h(z)$ exists
finitely.

Now let us consider the case when the impression $I(P)$ of the prime end~$P$
contains~$\infty$. Since by assumption, $\sigma$ is not a boundary fixed point,
the argument of Step~1 shows that the angular limit
$h(\sigma):=\anglim_{z\to\sigma} h(z)$ is finite and hence $P$ is not trivial,
\textsl{i.e.}, $I(P)\neq\{\infty\}$. Using again
Proposition~\ref{PR_impressions} and Lemma~\ref{LM_prime-end-nonDW}, we
conclude that the impression is of the form $I(P)=\{w_0-x:\,x\ge0\}$, where
$w_0$ is some point\footnote{In fact, we could show that $h(\sigma)=w_0$, but
for our purposes it is enough to know that~$h(\sigma)\neq\infty$.}
on~$\partial_\Complex\Omega$. By \cite[Theorem~9.8]{ClusterSets} there exists a
null-chain $(C_n)$ belonging to the prime end~$P$ and converging\footnote{See
Definition~\ref{D_conv-null-chain}.} to~$w_1:=h(\sigma)\in I(P)\cap\Complex$.
Moreover, by the proof of \cite[Theorem~9.3]{ClusterSets}, we may assume that
each $C_n$ is an arc of the circle~$\tilde C_n:=\{w:\,|w-w_1|=r_n\}$ for some
$r_n$ positive and going to~$0$ as $n\to+\infty$. Denote by $w_n^{j}$, $j=1,2$,
$\Im w_n^1\le \Im w_n^2$, the end-points of~$C_n$ and by $D_n$ the connected
component of $\Omega\setminus C_n$ that contains $C_{n+1}$. By invariance
of~$\Omega$ w.r.t. the translations~$w\mapsto w+t$, $t\ge0$, the rays
$C_n^j:=\{w^j_n-x:\,x\ge0\}$ do not intersect $\Omega$. Fix any point
$w_*\in\Omega$ with $\Re w_*>A:=\Re w_1+\max\{r_n:{n\in\Natural}\}$. Since
$D_{n+1}\subset D_n$ for all $n\in\Natural$ and $\cap_{n\in\Natural}
D_n=\emptyset$, dropping a finite number of $C_n$'s we may assume that
$w_*\not\in D_n$ for all $n\in\Natural$. Then we have $\Im w_n^1\neq\Im w_n^2$,
because otherwise $D_n$ would be a subset of the disk bounded by $\tilde C_n$,
which is not possible since $I(P)\subset\closure{D_n}$.

Therefore the set $\Gamma_n:=C_n\cup C_n^1\cup C_n^2\cup\{\infty\}$ is a Jordan
curve. Clearly, one of the two Jordan domains bounded by~$\Gamma_n$ contains
the half-plane $\{w:\,\Re w>A\}$. We denote by $G_n$ the other Jordan domain
of~$\Gamma_n$. It is easy to see that $D_n\subset G_n$ for all~$n\in\UD$,
because $w_*\not\in D_n$. It follows that $|\Im w-\Im w_1|<r_n$ for any $w\in
D_n$ and all $n\in\Natural$. Recall that there exists a fundamental
system~$(U_n)$ of neighbourhoods of~$\sigma$ such that $h(U_n\cap \UD)=D_n$ for
all $n\in\Natural$. Thus $\Im h(z)\to\Im w_1$ as $\UD\ni z\to\sigma$.

\newcommand{\mes}{\mathop{\mathsf{mes}}}
\Step3{Now we assume that~$\sigma$ is a boundary non-regular fixed point
of~$(\phi_t)$. Again we have to show that the unrestricted limit of~$\Im h$
at~$\sigma$ exists finitely.} We use the arguments from the proofs of
Lemma~\ref{LM_cross-cut-mimus-infty} and Corollary~\ref{C_h-cont-at-rep}. In
the notation introduced in the proof of Lemma~\ref{LM_cross-cut-mimus-infty},
it sufficient to show that $\mes_\Real Y(x)\to0$ as ${x\to-\infty}$, where
$\mes_\Real\cdot$ stands for the length measure on~$\Real$. Then it would
follow that $\Im h(z)$ tends to the unique point in the intersection
$\bigcap_{x<x_0} Y(x)$. Suppose on the contrary that $\mes_\Real Y(x)>a$ for
some constant $a>0$ and all $x<x_0$. Recall that $Y(x)\subset Y(x')$ if $x>x'$.
Therefore, there exists an non-empty interval $Y_0$ which is a subset of~$Y(x)$
for all~$x<x_0$. It follows that $h([0,\sigma))$ is contained in the horizontal
strip $\{w:\Im w\in Y_0\}$, which in its turn is contained in the
domain~$\Omega$. According to~\cite[Theorem~2.5]{AnaFlows} this means
that~$\sigma$ is a boundary \textit{regular} fixed point. The contradiction
obtained proves the claim of this step.

\Step4{Assume finally that $\sigma$ is a boundary regular fixed point. We are
going to show that the limit set of $\Im h$ at~$\sigma$ is not a singleton.}
Assume first that $\sigma$ is not the DW-point. Then by \cite[Lemma~1]{CMP2006}
there exists a non-empty interval $Y_0\subset\Real$ such that the strip
$V:=\{w:\,\Im w\in Y_0\}$ is contained in~$\Omega$ and for every $w\in V$, the
$h^{-1}(w-t)\to \sigma$ as $t\to+\infty$. It immediately follows that the limit
set\footnote{We have shown a bit more: $Y_0$ is contained, in fact, in the
\textit{non-tangent limit set}
$$\big\{{y\in[-\infty,+\infty]}:\,{\exists (z_n)\subset\UD}\text{ s.t. }
{z_n\to \sigma}\text{ non-tangentially and }{\Im h(z_n)\to y}\big\}.$$} of $\Im
h(z)$ as $\UD\ni z\to\sigma$ contains~$Y_0$.

The proof for the case of the DW-point $\sigma=\tau$ is very similar. Take any
$w\in\Omega$. Then $h^{-1}(w+t)=\phi_t\big(h^{-1}(w)\big)\to\tau$ as
$t\to+\infty$. This means that the limit set of $\Im h$ at $\tau$ coincides
with the closure of $\{\Im w:\, w\in\Omega\}$. The proof is now complete.
\end{proof}

\REM{
\begin{lemma}\label{LM_h-inv-continuity}
Let $w_0\in\partial\Omega$, $w_0\neq\infty$. Suppose that
$R_0:=\{w_0+t:t>0\}\subset\Omega$. Then $h^{-1}$ has a continuous extension
to~$w_0$.
\end{lemma}
\begin{proof}
The pre-image~$h^{-1}(\gamma)$ of any slit $\gamma$ in the domain $\Omega$ is a
slit in~$\UD$, see Remark~\ref{RM_a.p.correspondence}. It follows that
$h^{-1}(w)$ tends to some~$\sigma_0\in\UC$ as $R_0\ni w\to w_0$.

Let $(w_n)\subset\Omega$ converge to $w_0$. We are going to prove that
$z_n:=h^{-1}(w_n)\to\sigma_0$ as $n\to+\infty$.

Denote $\delta_n:=\Im(w_n-a)$. Take any point $a\in R_0$. Since $a\in\Omega$
and $\Omega$ is open, there exists $\delta>0$ such that the segment
$[a-i\delta,a+i\delta]$ lies in~$\Omega$. If $n$ is large enough,
$|\delta_n|<\delta$ and then because of the translation invariance of~$\Omega$,
the Jordan arc $\gamma_{n,a}:=(w_0,a]\cup[a,a+i\delta_n]\cup[a+i\delta_n,w_n]$
lies in~$\Omega$.

This argument shows that  there exists a sequence~$(\gamma_n)$ of slits in
$\Omega$ such that: (a)~for each $n\in\Natural$, the tip of $\gamma_n$ is~$w_n$
and the root of~$\gamma_n$ is $w_0$; (b)~for each $n\in\Natural$, the slit
$h^{-1}(\gamma_n)$ lands at~$\sigma_0$; (c)~$\diam_\Complex(\gamma_n)\to0$ as
$n\to+\infty$.

According to Proposition~\ref{PR_unif-cont-inverse}, (c) implies that
$\diam_\Complex\big(h^{-1}(\gamma_n)\big)\to0$ as $n\to+\infty$. The latter, in
combination with~(a) and (b), means that $\big(z_n=h^{-1}(w_n)\big)$ converges
to~$\sigma_0$. The proof is complete.
\end{proof}
}

\begin{proposition}\label{PR_continuous_h}
Suppose that $h$ has no continuous extension to the DW-point~$\tau$. Then there
exist a line $L$ parallel to the real axis and a ray $R\subset L$ with
$\sup_{w\in R}\Re w=+\infty$ such that $L\cap\Omega=\emptyset$ and
$R\subset\partial\Omega$. In particular, the domain~$\mathcal D$ defined in
Lemma~\ref{LM_absorbing} is either half-plane of a horizontal strip.
\end{proposition}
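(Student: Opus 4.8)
The plan is to recast the hypothesis as a statement about the prime end $P$ that corresponds under $h$ to the DW-point $\tau$, and then to extract both the line $L$ and the ray $R$ from a single point of the impression $I(P)$ using only the translation invariance of $\Omega$. The geometric content is light; the work is in setting up the prime-end dictionary correctly.

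First I would record that the angular limit of $h$ at $\tau$ equals $\infty$ (this is Step~1 in the proof of Proposition~\ref{PR_Re_h}), so that $\infty\in I(P)$ in every case. Since the impression $I(P)$ coincides with the cluster set of $h$ at $\tau$ (see \cite[Theorem~9.4]{ClusterSets}) and lies in $\partial\Omega$, the map $h$ admits a continuous extension to $\tau$ if and only if $I(P)$ is a singleton, necessarily $\{\infty\}$, i.e. if and only if $P$ is trivial. Hence the assumption that $h$ has no continuous extension at $\tau$ says exactly that $P$ is non-trivial and $I(P)\cap\Complex\neq\emptyset$. As $P$ corresponds to the DW-point, Lemma~\ref{LM_prime-end-nonDW} then yields $\sup\{\Re w:\,w\in I(P)\cap\Complex\}=+\infty$, while Proposition~\ref{PR_impressions}(A) confines $I(P)\cap\Complex$ to a union $L_1\cup L_2$ of two horizontal lines. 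Consequently at least one of them, say $L$, satisfies $\sup\{\Re w:\,w\in I(P)\cap L\}=+\infty$; in particular $I(P)\cap L$ is a non-empty subset of $\partial\Omega$.

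Next I would invoke the invariance of $\Omega$ under $w\mapsto w+t$, $t>0$, twice. To obtain $L\cap\Omega=\emptyset$, suppose some $w_0\in L\cap\Omega$; then the whole rightward ray $\{w_0+t:\,t>0\}\subset\Omega$, yet $I(P)\cap L$ contains a point $w'$ with $\Re w'>\Re w_0$ on the same horizontal line, forcing $w'\in\Omega$ and contradicting $w'\in\partial\Omega$. Once $L\cap\Omega=\emptyset$ is secured, fix any $w^\ast\in I(P)\cap L\subset\partial\Omega$ and set $R:=\{w^\ast+t:\,t\ge0\}$. For each $t\ge0$, taking $\omega_n\in\Omega$ with $\omega_n\to w^\ast$ gives $\omega_n+t\in\Omega$ and $\omega_n+t\to w^\ast+t$, so $w^\ast+t\in\closure\Omega$; since $w^\ast+t\in L$ and $L\cap\Omega=\emptyset$, we conclude $w^\ast+t\in\partial\Omega$. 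Thus $R\subset\partial\Omega\cap L$ with $\sup_{w\in R}\Re w=+\infty$, which is precisely the claimed line and ray.

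The ``in particular'' clause is then immediate from Remark~\ref{RM_mathcalD}: the domain $\mathcal D$ is a horizontal strip, a half-plane with boundary parallel to $\Real$, or all of $\Complex$, and the existence of a horizontal line $L$ disjoint from $\Omega$ means the common imaginary part of $L$ lies outside $\{\Im w:\,w\in\Omega\}$, ruling out $\mathcal D=\Complex$. I expect the only delicate point to be the bookkeeping of the first paragraph, namely identifying ``no continuous extension'' with non-triviality of $P$ and verifying $I(P)\subset\partial\Omega$ so that the two translation-invariance contradictions can be run verbatim; the geometric core is short by comparison.
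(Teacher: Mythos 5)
Your proof is correct, and its overall skeleton matches the paper's: reduce the hypothesis to non-triviality of the prime end $P$ corresponding to $\tau$, then combine Lemma~\ref{LM_prime-end-nonDW} with Proposition~\ref{PR_impressions}(A) and translation invariance of $\Omega$. The difference lies in how the ray $R$ is produced, and it is worth noting. The paper argues in the opposite order: it asserts that $I(P)$ itself \emph{contains} a ray $R$ parallel to $\Real$ with $\sup_{w\in R}\Re w=+\infty$, deduces $R\subset\partial\Omega$ from $I(P)\subset\partial\Omega$, and only then concludes $L\cap\Omega=\emptyset$ from translation invariance. That first assertion is not a formal consequence of the two cited results alone: one also needs that $I(P)$ is a continuum inside the ``figure-eight'' $L_1\cup L_2\cup\{\infty\}$, so that its intersection with each circle $L_j\cup\{\infty\}$ is a closed arc through $\infty$, which must contain a rightward ray once it contains points of arbitrarily large real part; the paper leaves this connectedness step implicit. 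Your version sidesteps it entirely: you extract only the weaker (and immediate) statement that $I(P)\cap L$ is unbounded to the right for one of the two lines, prove $L\cap\Omega=\emptyset$ first by translating a hypothetical point of $L\cap\Omega$ onto a point of $I(P)\cap L\subset\partial\Omega$, and then manufacture the ray by translating a single point $w^\ast\in I(P)\cap L$, using that $\closure{\Omega}$ is invariant under $w\mapsto w+t$ and that $L$ misses $\Omega$. Your ray need not lie in $I(P)$, but the proposition does not require that. So your route is slightly longer but more elementary and fully self-contained, whereas the paper's is shorter modulo a topological fact about impressions that it does not spell out; both yield the ``in particular'' clause the same way, via Remark~\ref{RM_mathcalD}.
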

\begin{proof}
Let $P$ be the prime end of~$\Omega$ that corresponds under the mapping~$h$ to
the DW-point~$\tau=1$. Since by the hypothesis $h$ has no continuous extention
to~$\tau$, the prime end $P$ is not trivial. Then it follows from
Lemma~\ref{LM_prime-end-nonDW} and Proposition~\ref{PR_impressions}(A) that the
impression~$I(P)$ contains a ray $R$ parallel to the real axis with $\sup_{w\in
R}\Re w=+\infty$. In particular, $R\subset\partial\Omega$. Then by the
translational invariance of~$\Omega$, the straight line $L$ containing~$R$
cannot intersect~$\Omega$. The proof is complete.
\end{proof}

\begin{proposition}\label{PR_hyperbol}
The following two statements are equivalent:
\begin{mylist}
\item[(A)] any half-plane $H$ bounded by a line parallel to~$\Real$, has
non-empty intersection with~${\Complex\setminus\Omega}$;
\item[(B)] the family
$$ \Phi:=\Big(\UDc\ni z\mapsto \phi_t(z)\in\UDc\Big)_{t\ge0}
$$
is equicontinuous at the DW-point~$\tau=1$ of~$(\phi_t)$.
\end{mylist}
\end{proposition}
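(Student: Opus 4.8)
Throughout I would work in the Abel model: since $\tau=1$ is the DW-point, the K\oe nigs function $h$ conjugates $(\phi_t)$ to the right-translation semigroup, so that each trajectory $t\mapsto\phi_t(z)=h^{-1}\big(h(z)+t\big)$ corresponds to the rightward orbit $t\mapsto h(z)+t$ in $\Omega$, and $\phi_t(\tau)=\tau$ for all $t$. Write $P_\tau$ for the prime end of $\Omega$ corresponding to $\tau$ under $h$. The plan is to reduce both (A) and (B) to a single geometric condition, namely that $P_\tau$ admits \emph{no left approach}, by which I mean that there is no sequence $(w_n)\subset\Omega$ with $h^{-1}(w_n)\to\tau$ and $\Re w_n\to-\infty$.

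First I would recast (B). Since $\UDc$ is compact and $\phi_t(\tau)=\tau$, the family $\Phi$ fails to be equicontinuous at $\tau$ precisely when there are $z_n\to\tau$ in $\UDc$, times $t_n\ge0$, and a point $\xi\ne\tau$ with $\phi_{t_n}(z_n)\to\xi$; call such data an \emph{escaping sequence}. Putting $w_n:=h(z_n)$ (approximating boundary $z_n$ by interior points through the angular limits of Theorem~\ref{TH_anglims}), the point $w_n$ tends to $\infty$ while $w_n+t_n=h\big(\phi_{t_n}(z_n)\big)$ stays bounded (if $\xi\in\UD$ or $\xi$ is a non-fixed contact point) or runs off with $\Re\to-\infty$ (if $\xi$ is a repelling fixed point, by Corollary~\ref{C_h-cont-at-rep}). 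In either case $\Im(w_n+t_n)=\Im w_n$ has a finite limit $y^{*}$, and since $t_n\ge0$ this forces $\Re w_n\to-\infty$; thus \emph{an escaping sequence exists if and only if $P_\tau$ admits a left approach}. The reverse implication is the flow-back construction: given a left approach with $\Im w_n\to y^{*}$, one chooses $t_n$ so that $\Re(w_n+t_n)$ stays bounded, landing $\phi_{t_n}(z_n)$ at a point of $\UD$ distinct from $\tau$.

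Next I would prove that (A) is equivalent to the absence of a left approach. For $\neg\text{(A)}\Rightarrow$ left approach: if $\Omega$ contains a horizontal half-plane $H=\{\Im w>c\}$, then $H$ is an open half-plane lying in $\Omega$ whose single end at $\infty$ is accessible rightward and therefore corresponds to $P_\tau$; consequently any horizontal leftward ray $\{x+iy^{*}:x\le0\}\subset H$ (with $y^{*}>c$) converges to $P_\tau$ as $x\to-\infty$, which is a left approach. For the converse, left approach $\Rightarrow\neg\text{(A)}$: from a left approach I would extract, using Proposition~\ref{PR_impressions}, an honest leftward ray $\Gamma=\{x+iy^{*}:x\le x_0\}\subset\Omega$ converging to $P_\tau$, at a height $y^{*}$ lying on one of the (at most two) lines carrying $I(P_\tau)$. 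If (A) held, $\Complex\setminus\Omega$ would contain points with $\Im<y^{*}$ and with $\Im>y^{*}$; as the complement is invariant under leftward translation, letting $x_0\to-\infty$ places such points to the right of $\Gamma$, so Lemma~\ref{LM_cross-cut-mimus-infty} applies and shows that $\Gamma$ lands at a \emph{trivial} prime end along which $\Re h\to-\infty$ unrestrictedly. By Proposition~\ref{Pr_repelling_iff} the associated boundary point is then a repelling fixed point, hence different from $\tau$ — contradicting that $\Gamma$ converges to $P_\tau$. Thus (A) rules out left approaches, and combining the three reductions gives (A) $\iff$ no left approach $\iff$ no escaping sequence $\iff$ (B).

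The main obstacle is the prime-end bookkeeping in the middle step. In the direction $\neg\text{(A)}\Rightarrow$ left approach, one must verify rigorously that the infinite end of the half-plane $H$ really is the prime end $P_\tau$, and not some other prime end of $\Omega$ accumulating at $\infty$ (as happens, for a non-isolated DW-point, when $\Complex\setminus\Omega$ is unbounded in both imaginary directions). In the opposite direction, the delicate part is promoting an abstract escaping sequence to a genuine horizontal ray $\Gamma\subset\Omega$ at the finite height $y^{*}$ that truly lands at $P_\tau$, so that Lemma~\ref{LM_cross-cut-mimus-infty} and Proposition~\ref{Pr_repelling_iff} may be invoked. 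Both points are exactly where the translation-invariance of $\Omega$ (Lemma~\ref{LM_absorbing}) and the two-line structure of impressions (Proposition~\ref{PR_impressions}) have to be used with care; everything else is the routine passage between equicontinuity, escaping sequences, and left approaches.
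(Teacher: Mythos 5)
Your reduction of both (A) and (B) to the single condition ``$P_\tau$ admits no left approach'' is where the proof breaks down: that condition is \emph{not} equivalent to either (A) or (B), and the paper's own Example~\ref{EX1} is a counterexample. There $\Omega$ is the strip $\{w:|\Im w|<1\}$ minus the slits $I_n'\cup I_n''$; the semigroup is of hyperbolic type, so both (A) and (B) hold. Yet left approaches to $\tau$ exist: the thin horizontal strips $S_n$ lying between consecutive slits are full bi-infinite strips contained in $\Omega$, each $S_n$ lies in the set $D_n$ whose preimage $W_n\cap\UD$ belongs to a neighbourhood basis $(W_n)$ of $\tau$ with $\mathrm{diam}\,W_n\to0$; hence any sequence $w_k\in S_{m_k}$ with $m_k\to\infty$ and $\Re w_k\to-\infty$ satisfies $h^{-1}(w_k)\to\tau$. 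The same picture kills your ``flow-back'' step (left approach $\Rightarrow$ escaping sequence): choosing $t_k$ so that $\Re(w_k+t_k)$ stays bounded leaves $w_k+t_k$ inside the same thin strip $S_{m_k}\subset D_{m_k}$, so $\phi_{t_k}\bigl(h^{-1}(w_k)\bigr)=h^{-1}(w_k+t_k)$ still converges to $\tau$, not to a point $\xi\neq\tau$. The underlying error is the tacit assumption that nearness of $h^{-1}(w)$ to $\tau$ is governed by $\Re w$; when repelling fixed points accumulate at $\tau$ (precisely the phenomenon this paper emphasizes), points arbitrarily far to the left have preimages arbitrarily close to $\tau$. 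Consequently your attempted proof of ``left approach $\Rightarrow\neg(\mathrm A)$'' cannot be repaired either: in Example~\ref{EX1} no horizontal left ray in $\Omega$ lands at $P_\tau$ (each lands at some repelling point $\sigma_n$), so the ``honest leftward ray converging to $P_\tau$'' you hope to extract simply does not exist, even though left approaches do.

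The forward direction of your middle step has an independent gap: in ``escaping sequence $\Rightarrow$ left approach'' you rely on the unjustified claims that $w_n\to\infty$ (false in general, since $w_n$ clusters on $I(P_\tau)$, which may contain finite points) and that $\Im w_n$ converges finitely, and you never treat escaping sequences with bounded $(t_n)$, for which $\Re w_n$ stays bounded. Ruling those out is exactly the content of Proposition~\ref{PR_phi_cont_DW}, which the paper proves \emph{after} and \emph{by means of} Proposition~\ref{PR_hyperbol}, so it cannot be invoked here without circularity. For comparison, the paper's proof needs no intermediate condition: for $\neg(\mathrm A)\Rightarrow\neg(\mathrm B)$ it takes a full horizontal line $L$ inside the half-plane, notes that the rays $L^{\pm}$ are \emph{equivalent} slits so that $h^{-1}(L^-)$ lands at $\tau$, and flows back so that $h^{-1}(w_n+t_n)$ is literally the \emph{same} interior point for all $n$; for $(\mathrm A)\Rightarrow(\mathrm B)$ it uses (A) to build a null-chain $(C_n)$ representing $P_\tau$ whose tail components $D_n$ are invariant under \emph{all} right translations, yielding neighbourhoods $W_n$ of $\tau$ with $\phi_t(W_n\cap\UD)\subset W_n$ for every $t\ge0$ simultaneously. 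Some construction of this uniform-in-$t$ kind is unavoidable, and your argument has no substitute for it.
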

\begin{proof}
First of all we notice that if (A) fails to hold, \textsl{i.e.}, if there
exists a half-plane $H\subset\Omega$ whose boundary is parallel to~$\Real$,
then the family $\Phi$ is not equicontinuous at~$\tau$. Indeed, take any line
$L\subset H$ parallel to $\Real$. Then the rays $L^+:=\{w\in L:\Re w\ge0\}$,
$L^-:=\{w\in L:\Re w\le0\}$ are two equivalent slits in~$\Omega$ and, as it
follows from Remarks~\ref{RM_a.p.correspondence}~and~\ref{RM_slittoinfty},
their preimages~$h^{-1}(L^+),\,h^{-1}(L^-)$ land at the DW-point~$\tau=1$. Take
any sequence $(w_n)\subset L^-$ tending to~$\infty$. Write $t_n:=-\Re w_n$.
Then on the one hand $z_n:=h^{-1}(w_n)\to\tau$ as $n\to\infty$. However, on the
other hand, $\phi_{t_n}(z_n)=h^{-1}(w_n+t_n)$ is the same point in~$\UD$ for
all~$n\in\Natural$. This shows that the family~$\Phi$ is not equicontinuous
at~$\tau$.

It now remains to prove that (A) implies (B). The idea of the proof is as
follows. Choose a point in $w_0\in\Omega$. By Remark~\ref{RM_slittoinfty} the
preimage $h^{-1}(R_0)$ of the ray~$R_0:=\{w_0+t:t\ge0\}$ is a slit landing at
the DW-point~$\tau=1$. In other words, $R_0$ as a slit in~$\Omega$, converges
to the prime end~$P$ corresponding under the map~$h$ to the DW-point~$\tau$. We
will construct a null-chain $(C_n)$ that represents the prime end~$P$ and which
has the following property: for each $n\in\Natural$, the connected
component~$D_n$ of the set~$\Omega\setminus C_n$ that contains $C_{n+1}$ is
invariant w.r.t. the translations $w\mapsto w+t$,~$t\ge0$.

For each $n\in\Natural$ denote $x_n:=\Re w_0+n$, $w_n:=x_n+i\Im w_0\in R_0$,
and let $\tilde C_n$ stand for the connected component of the set
$\{w\in\Omega:\Re w=x_n\}$ that contains the point~$w_n$. The following four
cases exhaust all possibilities:
\begin{mylist}
\item[Case 1:] for each $n\in\Natural$ the set $\tilde C_n$ is bounded.

\item[Case 2:] there exists $n_0\in\Natural$ such that for all $n>n_0$, $$\inf\{\Im w:w\in\tilde
C_n\}\in\Real,\quad \sup\{\Im w:w\in\tilde C_n\}=+\infty.$$

\item[Case 3:] there exists $n_0\in\Natural$ such that for all $n>n_0$, $$\inf\{\Im w:w\in\tilde
C_n\}=-\infty,\quad \sup\{\Im w:w\in\tilde C_n\}\in\Real.$$

\item[Case 4:] there exists $n_0\in\Natural$ such that for all $n>n_0$, $$\inf\{\Im w:w\in\tilde
C_n\}=-\infty,\quad \sup\{\Im w:w\in\tilde C_n\}=+\infty.$$
\end{mylist}

In {\it Case 1}, the sets $\tilde C_n$ are of the form $\tilde
C_n=(x_n+iy'_n,x_n+iy''_n)$, where $-\infty<y'_n<y_n''<+\infty$, and we set
$C_n:=\tilde C_n$ for all $n\in\Natural$. Because of the translational
invariance of~$\Omega$, $(y_n',y_n'')\subset (y_{n+1}',y_{n+1}'')$ for
every~$n\in\Natural$.

In {\it Case 2}, taking if necessary $w_{n_0}$ instead of~$w_0$,  we may assume
that $n_0=0$. Condition~(A) implies that there exists a strictly increasing
unbounded sequence $(y_n'')\subset(\Im w_0,+\infty)$ such that for every
$n\in\Natural$ the line $$L''_n:=\{w\in\Complex:\Im w=y_n''\}$$ is not a subset
of~$\Omega$. Note that $x_n+iy_n''\in L''_n\cap\Omega$. Now set
$y_n':=\inf\{\Im w:w\in\tilde C_n\}$, $x_n'':=\inf\{\Re w:w\in
L''_n\cap\Omega\}$ and let
$$
C_n:=(x_n+iy_n',x_n+iy_n'']\cup[x_n+iy_n'',x_n''+iy_n'').
$$
Note that again we have $(y_n',y_n'')\subset (y_{n+1}',y_{n+1}'')$ for
every~$n\in\Natural$.

{\it  Case 3} can reduced to the previous case by considering
$\overline{\phi_t(\bar z)}$ instead of~$\phi_t(z)$, which leads to passing from
$h(z)$ to $\overline{h(\bar z)}$. So we may skip Case 3.

In {\it Case 4} we also will assume that $n_0=0$. Fix a strictly increasing
unbounded sequence $(y_n'')\subset(\Im w_0,+\infty)$ and a strictly decreasing
unbounded sequence $(y_n')\subset(-\infty, \Im w_0)$ such that for
every~${n\in\Natural}$ the lines
$$L'_n:=\{w\in\Complex:\Im w=y_n'\}\quad\text{ and }\quad
L''_n:=\{w\in\Complex:\Im w=y_n''\}$$ are not subsets of~$\Omega$. Note that
$x_n+iy_n'\in L'_n\cap\Omega$ and $x_n+iy_n''\in L''_n\cap\Omega$. Now set
$x_n':=\inf\{\Re w:w\in L'_n\cap\Omega\}$, $x_n'':=\inf\{\Re w:w\in
L''_n\cap\Omega\}$ and let
$$
C_n:=(x_n'+iy_n',x_n+iy_n']\cup[x_n+iy_n',x_n+iy_n'']\cup[x_n+iy_n'',x_n''+iy_n'').
$$

Clearly, in all the cases for each $n\in\Natural$, $C_n$ is a slit in~$\Omega$,
the closures $\closure{C_n}$ are pairwise disjoint, and $\diam_{\ComplexE}
C_n\to0$ as $n\to+\infty$. To prove that $(C_n)$ is a null-chain, it remains to
show that for every $n\ge2$, $C_{n-1}$ and $C_{n+1}$ are contained in two
different connected components of $\Omega\setminus C_{n}$. Arguing as in the
proof of Lemma~\ref{LM_cross-cut-mimus-infty}, one can easily conclude that the
connected components of $\Omega\setminus C_n$ are
$\Omega\setminus\closure{G_n}$ and $\Omega\cap G_n$, where
$$G_n:=\{w\in\Complex:\,\Re w<x_n,\,y'_n<\Im w<y''_n\}.$$
By construction $C_{n+1}\cap G_n=\emptyset$, while $C_{n-1}\subset G_n$ because
$(y'_{n-1},y''_{n-1})\subset(y'_n,y''_n)$.

Thus $(C_n)$ is a null-chain, and $D_n=\Omega\setminus\closure{G_n}$ for
all~$n\in\Natural$. Hence it can be seen easily from the construction that the
slit $R_0$ converges to the prime end~$P$ represented by~$(C_n)$. Moreover,
$\bigcap_{n\in\Natural}D_n=\emptyset$, see,
\textsl{e.g.},\,\cite[p.\,170\,--\,171]{ClusterSets}. Thus, discarding a finite
number of cross-cuts in~$(C_n)$, we may assume that $h(0)\not\in D_n$ for
all~$n\in\Natural$.

Now fix any $n\in\Natural$. Since $t+G_n\supset G_n$ for any~$t\ge0$, we see
that $D_n$ is invariant w.r.t. the translations~$w\mapsto w+t$, $t\ge0$. This
means that the set $U_n:=h^{-1}(D_n)$ is invariant w.r.t. the
semigroup~$(\phi_t)$. Note that $h^{-1}(C_k)$, $k\in\Natural$, are cross-cuts
in~$\UD$, whose closures $\Gamma_k:=\closure{h^{-1}(C_k)}$ are pairwise
disjoint, see Remark~\ref{RM_a.p.correspondence}. Note that $0\not\in U_n$ by
construction. Therefore, $U_n=W_n\cap \UD$, where $W_n$ is the bounded Jordan
domain with $\partial W_n$ formed by $\Gamma_n$ together with its
reflection~$\Gamma_n^*$ w.r.t.~$\UC$. Furthermore, $\diam_{\ComplexE} C_n\to0$
implies, according to Proposition~\ref{PR_unif-cont-inverse}, that
$\diam_\Complex \Gamma_n\to0$ as $n\to+\infty$. Hence $\diam_\Complex W_n\to0$
as $n\to+\infty$. Finally, note that
$$\closure{h^{-1}(R_0)}=\closure{\{\phi_t(h^{-1}(w_0)):t\ge0\}}$$ is a Jordan arc that
joins $z=0\not\in W_n$ with the DW-point~$\tau$ and which, by the construction,
intersects $\partial W_n$ exactly once and in a non-tangential way. It follows
that~$\tau\in W_n$ for all $n\in\Natural$. Thus $(W_n)_{n\in\Natural}$ is a
neighbourhood basis of the point~$\tau$ with the property that
$\phi_t(W_n\cap\UD)\subset W_n$ for all $t\ge0$ and all~$n\in\Natural$.  It
follows that~(B) holds and thus the proof is completed.
\end{proof}

\begin{proposition}\label{PR_phi_cont_DW}
For each $T>0$ the family
$$ \Phi_T:=\Big(\UDc\ni z\mapsto
\phi_t(z)\in\UDc\Big)_{t\in[0,T]}
$$
is equicontinuous at the DW-point~$\tau=1$ of~$(\phi_t)$.
\end{proposition}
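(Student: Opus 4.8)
The plan is to reduce the asserted equicontinuity over the compact parameter interval $[0,T]$ to a pointwise (fixed-$t$) continuity statement, and then to establish the latter by a prime-end argument. First I would note that, by Proposition~\ref{PR_cont-in-t}, the family $\big(t\mapsto\phi_t(z)\big)_{z\in\UDc}$ is uniformly equicontinuous, so for every $\varepsilon>0$ there is $\eta>0$ with $|\phi_t(z)-\phi_s(z)|<\varepsilon/2$ for all $z\in\UDc$ whenever $|t-s|<\eta$. Covering $[0,T]$ by finitely many points $s_0,\dots,s_N$ of mesh $<\eta$ and using $\phi_{s_j}(\tau)=\tau$, one sees that equicontinuity of $\Phi_T$ at $\tau$ follows \emph{as soon as each individual map $\phi_t$ (with $t\ge0$ fixed) has unrestricted limit $\anglim_{\UDc\ni z\to\tau}\phi_t(z)=\tau$}. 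If $h$ extends continuously to $\tau$ this is immediate from Proposition~\ref{PR_if_h_cont_then_phi_cont}, so the real content is the case where $h$ does not so extend.

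\textbf{The fixed-$t$ statement by contradiction.} Fix $t\ge0$ and suppose $z_n\to\tau$ in $\UDc$ while $\phi_t(z_n)\to p\neq\tau$; approximating boundary points by interior ones with the help of Theorem~\ref{TH_anglims}, I may assume $z_n\in\UD$. Writing $w_n:=h(z_n)\in\Omega$ and using the invariance of $\Omega$ under the translations $w\mapsto w+s$, $s\ge0$, the horizontal segment $I_n:=\{w_n+s:0\le s\le t\}$ lies in $\Omega$ and joins $w_n=h(z_n)$ to $w_n+t=h(\phi_t(z_n))$; thus its two endpoints are carried by $h^{-1}$ to $\tau$ and to $p$, respectively. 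After passing to a subsequence, either $|w_n|\to\infty$ or $w_n\to\zeta\in\partial\Omega$ with $\zeta\in\Complex$.

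\textbf{The escaping case.} If $|w_n|\to\infty$, then the segment $I_n$, of fixed Euclidean length $t$ but situated near $\infty$, has $\diam_{\ComplexE}I_n\to0$; hence $d_\Omega(w_n,w_n+t)\to0$, and the uniform continuity of $h^{-1}$ furnished by Proposition~\ref{PR_unif-cont-inverse} gives $|z_n-\phi_t(z_n)|\to0$. Therefore $\phi_t(z_n)\to\tau$, contradicting $p\ne\tau$. This is precisely where the finiteness of $t$ is used, and it is the analytic analogue of the failure of equicontinuity for the full family recorded in Proposition~\ref{PR_hyperbol}.

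\textbf{The bounded case (main obstacle).} The remaining possibility $w_n\to\zeta\in\partial\Omega\cap\Complex$ is the heart of the matter, and it does occur: it corresponds to the Denjoy--Wolff prime end $P$ having finite impression points (compare Example~\ref{EX1}). Here $w_n+t\to\zeta+t$, the limiting segment $[\zeta,\zeta+t]$ lies in $\closure\Omega$, and by forward invariance it meets $\Omega$ only in a terminal sub-ray, so both $\zeta$ and $\zeta+t$ belong to $\partial\Omega$. By Proposition~\ref{PR_impressions}(A) the finite part of $I(P)$ lies on at most two horizontal lines, and by Proposition~\ref{PR_continuous_h} together with Lemma~\ref{LM_prime-end-nonDW} these lines carry horizontal boundary rays of $\Omega$ running out to $\Re=+\infty$; hence $\zeta$ and its horizontal translate $\zeta+t$ lie on one and the same boundary line $L$ of $\Omega$, approached by the $I_n$ from a single side (after a further subsequence on which $\Im w_n-\Im\zeta$ keeps a constant sign), with the adjacent thin strip contained in $\Omega$. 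The two approaches, to $\zeta$ and to $\zeta+t$, are then \emph{equivalent} in the sense of Remark~\ref{RM_a.p.correspondence}, so $h^{-1}(w_n)$ and $h^{-1}(w_n+t)$ land at the same point of $\UC$; that is, $p=\tau$, the desired contradiction. I expect the hard part to be exactly this last step: pinning down the local geometry of $\partial\Omega$ at the finite impression points of the Denjoy--Wolff prime end and checking that a bounded horizontal translation preserves the associated prime end — once the reduction and the escaping case are in hand, the rest is routine.
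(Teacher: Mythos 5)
Your reduction to a fixed $t$ (via Proposition~\ref{PR_cont-in-t}) and your treatment of the escaping case $|w_n|\to\infty$ (via Proposition~\ref{PR_unif-cont-inverse}) are both correct, but the bounded case --- which you rightly call the heart of the matter --- has a genuine gap, on two counts. First, the assertion that $\zeta+t\in\partial\Omega$ does not follow from what you wrote: forward invariance only says that $[\zeta,\zeta+t]\cap\Omega$ is a terminal subsegment, and that subsegment may very well be nonempty, i.e.\ $\zeta+t\in\Omega$ is not excluded by your argument. In that event $\phi_t(z_n)=h^{-1}(w_n+t)\to h^{-1}(\zeta+t)\in\UD$ and nothing you have cited yields a contradiction. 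The references do not close this hole: Proposition~\ref{PR_continuous_h} produces \emph{one} horizontal line disjoint from $\Omega$, while by Proposition~\ref{PR_impressions}(A) the set $I(P)\cap\Complex$ may lie on \emph{two} lines, and none of the quoted statements forbids the second line from meeting $\Omega$ in a forward ray containing $\zeta+t$. The claim is in fact true, but proving it is real work: one must argue, e.g., that if $\zeta+t_0\in\Omega$ then a half-strip around $[\zeta+t_0,+\infty)$ lies in $\Omega$, hence every point of $\Omega$ near $\zeta$ joins $\zeta+t_0$ inside a fixed bounded subset of $\Omega$, whereas the cross-cuts of any null-chain representing the Denjoy--Wolff prime end eventually avoid every bounded set (they must cut the ray $[\zeta+t_0,+\infty)$, which stays at positive distance from $\partial\Omega$); consequently points of $\Omega$ near $\zeta$ eventually lie outside the nested domains $D_m$, contradicting $\zeta\in I(P)$. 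No argument of this kind appears in your proposal.

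Second, even granting $\zeta,\zeta+t\in\partial\Omega$, the concluding step via ``equivalent slits'' is not meaningful. In Remark~\ref{RM_a.p.correspondence} equivalence is defined only for slits sharing the \emph{same} root; here the roots $\zeta$ and $\zeta+t$ are distinct, so the two approaches can never be equivalent slits (and $(w_n)$, $(w_n+t)$ are sequences, not slits, so some connecting construction would be needed anyway). Moreover, the ``adjacent thin strip contained in $\Omega$'' is false in general: in Example~\ref{EX1} the slits of $\partial\Omega$ accumulate on the very line carrying $I(P)$, so no strip adjacent to that line lies in $\Omega$. What your argument actually needs is that translation by $t$ maps suitably chosen prime-end neighbourhoods of the Denjoy--Wolff prime end into one another; this is precisely the content of the paper's construction of a special null-chain satisfying~\eqref{EQ_vlozh}, namely $D_k+t\subset D_n$ for $t\in[0,k-n]$, and your proposal contains no substitute for it. So the plan is salvageable in outline, but the step you yourself flag as the hard one is missing, and it is essentially the whole proof.
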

\begin{proof}
Essentially we will use the same idea as for the proof of implication
(A)$\Rightarrow$(B) in Proposition~\ref{PR_hyperbol}.

In view of Propositions~\ref{PR_if_h_cont_then_phi_cont} and~\ref{PR_hyperbol},
we may assume that the prime end $P$ that corresponds under the mapping~$h$ to
the DW-point~$\tau$ is non-trivial and, employing also
Proposition~\ref{PR_continuous_h}, that there exist two half-planes $H_1,H_2$
with $\partial_\Complex H_j$ parallel to~$\Real$, $j=1,2$, such that
$H_1\subset\Omega\subset H_2$. Without loss of generality we may assume that
$H_j$'s are of the form
$$
H_j:=\{w\in\Complex:\Im w<y_j\}
$$
for some $y_1<y_2\in\Real$. Fix any $w_0\in H_1$.

Let $n\in\Natural$. Denote $C_n:=\tilde C_{-n}\cup\{w:\Im(w-w_0)\le0,\,
|w-w_0|=n\}\cup\tilde C_n$, where $\tilde C_m$, $m\in\mathbb Z$, stands for the
connected component of $\{w\in\Omega:\Re (w-w_0)=m,\,\Im (w-w_0)\ge0\}$ that
contains the point $w_0+m$. Clearly, $C_n$'s are cross-cuts in $\Omega$ with
pairwise disjoint closures. Moreover, $\diam_{\ComplexE} C_n\to0$ as
$n\to+\infty$.

For $m\in\mathbb Z$ denote $y'_m:=\sup\{\Im w:w\in\tilde C_m\}$. Note that
$y_m'<y_2<+\infty$ because $\Omega\subset H_2$. By the translational invariance
of~$\Omega$, we have $y_m'\le y_k'$ whenever $m<k$. It follows that for any
$n\ge2$, $C_{n+1}\subset \Omega\setminus G_n$ and $C_{n-1}\subset G_n$, where
\begin{multline*}
G_n:=\{w:\Im(w-w_0)<0,\, |w-w_0|<n\}\\\cup\{w:|\Re (w-w_0)|<n,\,\Im w_0\le\Im
w<y'_n\}\\\cup\{w:\Re (w-w_0)\le -n,\,y_{-n}'<\Im w<y_n'\}.
\end{multline*}
(The last set in the union may be empty.) Arguing as in the proof of
Lemma~\ref{LM_cross-cut-mimus-infty}, one can conclude that for each $n\ge 2$
the sets $D_n:=\Omega\setminus\closure{G_n}\supset C_{n+1}$ and
$D'_n:=\Omega\cap G_n\supset C_{n-1}$ are the two connected components
of~$\Omega\setminus C_n$. Thus $(C_n)$ is a null-chain.

We also notice that if $k>n$, then $\closure{G_n}\subset \closure{G_k}+t$ for
all $t\in[0,k-n]$. It follows that
\begin{equation}\label{EQ_vlozh}
D_k+t=(\Omega\setminus\closure{G_k})+t=(\Omega+t)\setminus(\closure
{G_k}+t)\subset \Omega\setminus(\closure{G_k}+t)\subset \Omega\setminus\closure
{G_n}=D_n
\end{equation}
for all $t\in[0,k-n]$.

Fix $T>0$. Then according to~\eqref{EQ_vlozh}, $\phi_t(U_{k(n)})\subset U_n$
for all $t\in[0,T]$ and all $n\in\Natural$, where $U_n:=h^{-1}(D_n)$,
$k(n):=n+[T]+1$, and $[\,\cdot\,]$ stands for the integer part of a real
number. Note that $w_0\not\in D_n$ for all $n\in\Natural$ and that
$R_0:=\{w_0+t:t\ge0\}$ intersect each of $C_n$'s exactly once and
non-tangentially. Arguing now as in the proof of Proposition~\ref{PR_hyperbol},
we conclude that the family $\Phi_T$ is equicontinuous at~$\tau$.
\end{proof}

\subsection{Proof of the Main Theorem (Theorem~\ref{TH_cont-at-fixed-points})}\label{SS_proof}
By using M\"obius transformations of the unit disk, one always may assume that
the DW-point of $(\phi_t)$ is either $\tau=0$ (interior DW-point) or $\tau=1$
(boundary DW-point). Furthermore, using Proposition~\ref{PR_lifting} one can
reduce the case of the interior DW-point to the case of the boundary DW-point.
Therefore, without loss of generality we will assume that~$(\phi_t)$ has the
DW-point at~$\tau=1$.

Let $\sigma$ be a repelling boundary fixed point of~$(\phi_t)$. Then by
Corollary~\ref{C_h-cont-at-rep}, $h$ has the unrestricted limit at~$\sigma$.
Therefore, by Proposition~\ref{PR_if_h_cont_then_phi_cont}, for every $T>0$ the
family $\Phi_T$ is equicontinuous at~$\sigma$.

Note that by Proposition~\ref{PR_phi_cont_DW}, the family $\Phi_T$ is also
equicontinuous at the DW-point~$\tau$.

It remains to notice that if $(\phi_t)$ is of hyperbolic type, which means that
the angular derivative $\phi_t'(\tau)>1$ for $t>0$, then by
\cite[Theorem~2.1]{AnaFlows} the domain~$\Omega$ is contained in a horizontal
strip. Consequently, in this case by Proposition~\ref{PR_hyperbol}, the
family~$\Phi$ is equicontinuous at~$\tau$. This completes the proof.\qed

\section{A few examples}\label{SS_rem-loc-beh}
\subsection{Local behaviour near the boundary DW-point} Recall that if $(\phi_t)$ is a
one-parameter semigroup in~$\UD$, then $\phi_t$ converges, as $t\to+\infty$, to
the DW-point~$\tau$ \textit{locally uniformly} in~$\UD$. In the case of a
hyperbolic semigroup~$(\phi_t)$, although the DW-point~$\tau$ belongs to~$\UC$,
according to Theorem~\ref{TH_cont-at-fixed-points} the family $\Phi$ is
equicontinuous at~$\tau$. It is interesting to notice that these two facts do
\textbf{not} imply that there exists a neigbourhood~$W$ of~$\tau$ such that
$\phi_t\to\tau$  \textit{uniformly} in $\UD\cap W$ as $t\to+\infty$. For
instance, there can be infinitely many boundary fixed points in any
neighbourhood of~$\tau$, as the following example shows.

\begin{example}\label{EX1}
Denote $S:=\{w:|\Im w|<1\}$, $I'_n:=\{x+i(1-1/n):x\le n\}$, and
$I_n'':=\{x-i(1-1/n):x\le n\}.$ Then
$$\Omega:=S\setminus\bigcup_{n=2}^{+\infty}(I_n'\cup I_n'')$$ is a domain
invariant w.r.t. the translations $w\mapsto w+t$, $t\ge0$. Denote by $h$ the
conformal mapping of $\UD$ onto $\Omega$ normalized by the conditions $h(0)=0$
and $\lim_{t\to+\infty} h^{-1}(t)=1$. Then the formula
$\phi_t(z):=h^{-1}(h(z)+t)$ defines a one-parameter semigroup with the DW-point
$\tau=1$ and $h$ is its K\oe{}nigs function.

By \cite[Theorem~2.1]{AnaFlows}, $(\phi_t)$ is of hyperbolic type. Thus $\Phi$
is equicontinuous at~$\tau$. However, $h$ has no unrestricted limit at~$\tau$,
because the impression $I(P)$ of the prime end~$P$ that corresponds to~$\tau$
under the map~$h$, is the whole boundary of~$S$. To see that any neighbourhood
of $\tau$ contains infinitely many boundary fixed points let us return to the
proof of implication (A)$\Rightarrow$(B) in Proposition~\ref{PR_hyperbol}. Take
$w_0:=0$. Then for each $n\ge2$, the domain $D_n$ constructed in the proof of
that proposition, contains the strip $S_n:=\{w:1/n<\Im w<1/(n+1)\}$. According
to \cite[Theorem~2.5]{AnaFlows}, there exists a repelling boundary fixed point
$\sigma_n$ such that for every $w\in S_n$, $h^{-1}(w-t)\to\sigma_n$ as
$t\to+\infty$. Hence $\sigma_n\in \closure{W_n}$. It remains to recall that the
sequence $(W_n)$ form a neighbourhood basis of the point~$\tau$.
\end{example}

The reason why in the above example the uniform convergence of $\phi_t\to\tau$
fails in $W\cap\UD$ for any neighbourhood~$W$ of~$\tau$ is the presence of
repelling boundary fixed points. In fact, the following statement holds.
\begin{remark}\label{RM_conv-to-DW}
Let $(\phi_t)$ be any one-parameter semigroup in~$\Hol(\UD,\UD)$ with the
DW-point ${\tau\in\partial\UD}$. Let $\sigma\in\UDc$. Then either
$\phi_t(\sigma)=\sigma$ for all $t\ge0$, or $\phi_t(\sigma)\to\tau$ as
$t\to+\infty$. (Recall that for the case $\sigma\in\UC$, $\phi_t(\sigma)$
stands for the angular limit of~$\phi_t$ at~$\sigma$). Indeed, for
$\sigma\in\UC$, this follows from the Denjoy\,--\,Wolff Theorem, see
Remark~\ref{RM_cont-DW-theorem}. So assume that $\sigma\in\UC$. Then by
Proposition~\ref{PR_Re_h}, $\Gamma:=h([0,\sigma))$, where $h$ is the K\oe{}nigs
function of~$(\phi_t)$, is a slit in the domain~$\Omega:=h(\UD)$. For $t\ge0$
denote $\Gamma_t:=h(\phi_t([0,\sigma)))=t+\Gamma$. Let us assume first that
$\Gamma$ is bounded (as a subset of~$\Complex$), \textsl{i.e.}, it lands at
some point of~$\partial\Omega\cap\Complex$. We claim that
$\sup\big\{|h^{-1}(w)-\tau|:w\in \Gamma_t\big\}\to0$ as $t\to+\infty$, which is
equivalent to~$\phi_t\to\tau$ uniformly on~$[0,\sigma)$ and hence implies that
$\phi_t(\sigma)\to\tau$ as $t\to+\infty$. Indeed, recall again that
$L_w:=\{w+x:x\ge0\}\subset\Omega$ for any $w\in\Omega$. By boundedness
of~$\Gamma$,
$$\sup_{w\in\Gamma_t}\diam_{\ComplexE}(L_w)=
\sup_{\substack{w\in\Gamma\\x\ge0}}\frac1{\sqrt{1+|w+t+x|^2}}\to0\quad\text{as
$t\to+\infty$}.$$ Since $h^{-1}(L_w)$ is a slit in~$\UD$ landing at the
DW-point~$\tau$, our claim follows now from
Proposition~\ref{PR_unif-cont-inverse}.

It remains to consider the case when $\Gamma$ is not bounded, \textsl{i.e.},
the case when $\Gamma$ lands at~$\infty$. In this case $\Gamma_t$ is also a
slit in~$\Omega$ landing at~$\infty$ for any~$t\ge0$. Moreover, $\Gamma$ and
$\Gamma_t$ are {\it two equivalent slits} in~$\Omega$ for any~$t\ge0$, because
$[w,w+t]\subset\Omega$ for any $w\in\Gamma$. This means that $h^{-1}(\Gamma_t)$
lands at the same point as~$h^{-1}(\Gamma)=[0,\sigma)$, \textsl{i.e.}, at the
point~$\sigma$. Thus $\phi_t(\sigma)=\sigma$ for~all~$t\ge0$.
\end{remark}
\begin{remark}
It might be interesting to compare the statement of the previous remark with
analogous results for \textit{discrete iteration} in~$\UD$, see,
\textsl{e.g.},~\cite{Poggi_Pointwise} and~\cite[Section~5]{parab_zoo},
asserting that under some additional conditions on~$\phi\in\Hol(\UD,\UD)$, the
orbits $(\phi^{\circ n}(\sigma))_{n\in\Natural}$ converge to the DW-point
of~$\phi$ for a.e.~$\sigma\in\UC$ (where $\phi(\sigma)$ stands again for the
angular limit at~$\sigma$, whenever it exists).
\end{remark}

In view of Remark~\ref{RM_conv-to-DW}, it might look to be a plausible
conjecture that if the family $\Phi$ is equicontinuous at the boundary
DW-point~$\tau$ and if there is a neighbourhood~$W$ of~$\tau$ that does not
contain other boundary fixed points, then $\phi_t\to\tau$ as $t\to+\infty$
uniformly in~$W\cap\UD$. However, the following example disproves this
conjecture.

\begin{example}
Consider the domain
\begin{multline*}
\tilde S:=S_0\setminus\bigcup_{n=2}^{+\infty}(J'_n\cup
J''_n),\text{~~~where~}S_0:=\left\{u+iv: |v|<1;\,
u>-\frac1{1-|v|}\right\},\\
J_n':=\left[-n+i\left(1-\frac1n\right),\,i\left(1-\frac1n\right)\right],
~~J_n'':=\left[-n-i\left(1-\frac1n\right),\,-i\left(1-\frac1n\right)\right].
\end{multline*}
The segments $J_n'$ and $J_n''$ are slits in~$S_0$ landing on the curve
$u=-1/(1-|v|)$, $|v|<1$. Clearly, there exists no continuous map $F:[0,1)\to
\tilde S$ such that ${\lim_{[0,1)\ni x\to1} \Re F(x)=-\infty}$.

Denote by $\tilde S(y_1,y_2)$, where $y_1<y_2$, the image of $\tilde S$ under
the affine map  $u+iv\mapsto u+i(av+b)$, $a:=(y_2-y_1)/2$, $b:=(y_1+y_2)/2$,
chosen in such a way that the minimal strip containing $\tilde S(y_1,y_2)$ is
$\{w\in\Complex:y_1<\Im w<y_2\}$. Now we consider the domain $\Omega$
constructed in Example~\ref{EX1} and ``fill in'' with $\tilde S(y_1,y_2)$'s,
for appropriately chosen parameters~$y_1,y_2$, each of strips one obtains by
removing from $\Omega$ all the straight lines containing the slits $I'_n$ and
$I''_n$, $n\ge2$. In a more strict language, we consider the domain
\begin{multline*}
\tilde\Omega:=\Omega\setminus K(-1/2,1/2)\setminus\\
\bigcup_{n=2}^{+\infty}\Big(K\big(1-1/n,1-1/(1+n)\big)\cup
K\big(-1+1/(n+1),-1+1/n\big)\Big),\\\text{~~~where~~}
K(y_1,y_2):=\{w\in\Complex:y_1<\Im w<y_2\}\setminus\tilde S(y_1,y_2).
\end{multline*}
The set $\tilde \Omega$ is a simply connected domain in~$\Complex$ containing
the origin and invariant w.r.t. the translations $w\mapsto w+t$. Therefore,
there exists a unique conformal mapping $h$ of~$\UD$ onto~$\tilde \Omega$ with
$h(0)=0$, $\lim_{t\to+\infty} h^{-1}(t)=1$, which is the K\oe{}nigs function of
the one-parameter semigroup $(\phi_t):=(h^{-1}\circ(h+t))$ with the
DW-point~$\tau:=1$. First of all we notice that this semigroup has no repelling
boundary fixed points. Indeed, if $\sigma\in\UC\setminus\{\tau\}$ is a boundary
fixed point, then $[0,1)\ni x\mapsto F(x):=h(\sigma x)$ is a continuous map
into~$\tilde \Omega$, and by Proposition~\ref{Pr_repelling_iff}, $\Re
F(x)\to-\infty$ as $x\to1-0$. However, it is easy to see from the definition
of~$\tilde \Omega$ that there exists no mapping with these properties.

Note also that, as in Example~\ref{EX1}, $(\phi_t)$ is of hyperbolic type and
thus the family $\Phi$ is equicontinuous at~$\tau=1$. It remains to see that
there exists no neighbourhood~$W$ of~$\tau$ such that $\phi_t$ converges
uniformly to $\tau$ on~$W\cap\UD$. To this end take~$w_0=0$ and define~$(D_n)$
and $(W_n)$ as in the proof of implication (A)$\Rightarrow$(B) in
Proposition~\ref{PR_hyperbol}. Since for each $n\in\Natural$, we have
${\inf\{\Re w:w\in D_n\setminus D_{n+1}\}=-\infty}$, there exists no~$t\ge0$
such that $D_n+t\subset D_{n+1}$. Therefore, there exists no~$t\ge0$ such that
$\phi_t(W_n\cap\UD)\subset W_{n+1}\cap\UD$. Recall that~$(W_n)$ is a
neighbourhood basis of~$\tau$. Thus, although $\phi_t(z)\to\tau$ as
$t\to+\infty$ for all $z\in\UDc$, this convergence is not uniform in any
neighbourhood of~$\tau$.
\end{example}

\subsection{Contact points}
In Remark~\ref{RM_no-for-contract-points} we mentioned that the
Theorem~\ref{TH_cont-at-fixed-points} cannot be extended to \textit{contact
points}. To demonstrate this fact, we now present an example of a one-parameter
semigroup~$(\phi_t)$ with a contact point at which there exists no unrestricted
limit of~$\phi_t$'s.

\begin{example}\label{EX_contact}
Consider the domain
$$\Omega:=\UH_i\setminus\bigcup_{n\in\Natural}\{x+i/n:x\le0\}.$$
Clearly, this domain is invariant w.r.t.
the right translations. Therefore with an appropriate choice of a conformal
mapping~$h$ of~$\UD$ onto~$\Omega$, we obtain a one-parameter semigroup
$\phi_t:=h^{-1}\circ(h+t)$, $t\ge0$, with the DW-point~$\tau=1$.
 Moreover, $\Omega$ has a unique prime~$P$ end whose
impression is~$(-\infty,0]$ and this prime end contains an accessible boundary
point, \textsl{i.e.}, there is a slit $\Gamma$ in~$\Omega$ that converges
to~$P$ (\textsl{e.g.}, we can take~$\Gamma:=(0,1+i]$). This slit lands at the
point~$w_0=0$. For $t>0$ the translate~$\Gamma_t:=\Gamma+t$ of~$\Gamma$ is also
a slit in~$\Omega$, with landing point at~$w=t\in\partial\Omega$. Note that $\{w:\Re w>0,\,\Im w>0\}\subset\Omega\subset\UH_i$ and hence $h^{-1}$ has a continuous injective extension
to $\Omega\cup(0,+\infty)$. It follows that for each $t>0$, $\phi_t$ has a
contact point at the preimage~$\sigma_0$ of the prime end~$P$ under~$h$, but
does not have the unrestricted limit at~$\sigma_0$. In this example, $\sigma_0$
is not a \text{regular} contact point, \textsl{i.e.},
$\phi'_t(\sigma_0)=\infty$ for all $t>0$. However, a simple modification of
this example (take, \textsl{e.g.}, the domain
$\Omega':=\Omega\cup\{w:|w-i|<1\}$ instead of~$\Omega$) shows that even if we
consider a regular contact point, there still do not need to exist the
unrestricted limit at that point.
\end{example}

\section{A remark on evolution families admitting continuous extension to the
boundary}\label{S_EF-diskalgebra} The notion of an evolution family in
$\Hol(\UD,\UD)$ is a natural extension for that of a one-parameter semigroup to
the non-autonomous setting. It goes back to the seminal paper~\cite{Loewner}
that gave rise to a theory which is now known as Loewner Theory and which has
proved to be a powerful tool in the Geometric Function Theory and its
applications, see, \textsl{e.g.}, the survey~\cite{MFMS_ems}. We use the
general definition of an evolution family introduced in~\cite{BCM1}, see
also~\cite{SemigroupsGor}.
\begin{definition}\label{D_evol_fam}
Let with $d\in [1,+\infty]$. A family $(\varphi_{s,t})_{0\leq s\leq
t<+\infty}\subset\Hol(\UD,\UD)$ is said to be an {\sl evolution family of order $d$ in the
unit disk~$\UD$} if it satisfies the following conditions:
\begin{enumerate}
\item[EF1.] $\varphi_{s,s}=\id_{\mathbb{D}},$ for all $s\ge0$,

\item[EF2.] $\varphi_{s,t}=\varphi_{u,t}\circ\varphi_{s,u}$ whenever $0\leq
s\leq u\leq t<+\infty,$

\item[EF3.] for all $z\in\mathbb{D}$ and for all $T>0$ there exists a
non-negative function $k_{z,T}\in L^{d}([0,T],\mathbb{R})$ such that
\[
|\varphi_{s,u}(z)-\varphi_{s,t}(z)|\leq\int_{u}^{t}k_{z,T}(\xi)d\xi
\]
whenever $0\leq s\leq u\leq t\leq T.$
\end{enumerate}
\end{definition}

Similar to one-parameter semigroups, any evolution family in~$\UD$ is formed
by solutions to initial value problems for a specific first order ODE, driven
by the so-called {\it Herglotz vector fields}. These non-autonomous vector
fields can be regarded as locally integrable families of infinitesimal
generators; in particular, one-parametric semigroups and their infinitesimal
generators are special cases of evolution families and their corresponding
Herglotz vector fields, see \cite{BCM1} for the details. Therefore, the class
of holomorphic mappings~$\varphi\in\Hol(\UD,\UD)$ that can be embedded into an
evolution family in~$\UD$, \textsl{i.e.}, the class of all~$\varphi$ such that
$\varphi_{s,t}=\varphi$ for some $s\ge0$, $t\ge s$ and some evolution
family~$(\varphi_{s,t})$, is much large than that for one-parametric semigroups
in~$\UD$. In fact, using~\cite[Lemma~2.8]{SMP} one can deduce from the
Parametric Representation of bounded normalized univalent functions, see,
\textsl{e.g.},~\cite[Theorem~5, p.\,70]{Aleksandrov}, that this class,
regardless the order of the evolution families to be considered, coincides with
the set of all univalent holomorphic self-maps of~$\UD$.

Therefore, the results of this paper on one-parameter semigroups cannot be
extended, in general, to evolution families. However, an analogue of
Proposition~\ref{PR_cont-in-t} holds under additional condition that
\begin{itemize}
 \item[CNT1.]  $(\varphi_{s,t})_{t\ge s\ge0}\subset\dAlg$, where $\dAlg$ stands
for the class of all holomorphic functions in~$\UD$ admitting continuous
extension to the closed unit disk~$\UDc$.
\end{itemize}
We endow $\dAlg$ with the Chebysh\'ov (supremum) norm
${\|\varphi\|_{\dAlg}:=\sup_{z\in\UD}|\varphi(z)|}$. Each element $\varphi$
of~$\dAlg$ is identified with its extension to~$\UDc$.

\begin{remark}
From~\cite[Proposition 3.5]{BCM1} it follows that if $(\varphi_{s,t})$ is an evolution family
in~$\UD$ of some order~$d\in[1,+\infty]$, then
\begin{itemize}
\item[CNT2.] for each $z\in\UD$ the mapping $(s,t)\mapsto\varphi_{s,t}(z)$ from
$\{(s,t)\in\Real^2:0\le s\le t\}$ to $\UD$ is separately continuous in $s$ and $t$.
\end{itemize}
Moreover, all elements of an evolution family are univalent functions.
\end{remark}

\begin{proposition}\label{PR_EF-diskalgebra}
If a family $(\varphi_{s,t})_{0\le s\le t<+\infty}$ satisfies conditions EF1, EF2, CNT1, and
CNT2 and none of the functions $\varphi_{s,t}$ is constant, then for each $s\ge0$ the mapping
$[s,+\infty)\ni t\mapsto \varphi_{s,t}\in\dAlg$ is continuous.
\end{proposition}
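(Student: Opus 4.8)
The plan is to use the algebraic law EF2 to reduce the whole statement to the behaviour of $\varphi_{u,v}$ when $v-u$ is small, and then to promote the interior/pointwise information contained in CNT2 to uniform control on $\UDc$ by a No-Koebe-arcs argument. Fix $s\ge0$ and $t_0\ge s$. For $t>t_0$ I would write, via EF2, $\varphi_{s,t}=\varphi_{t_0,t}\circ\varphi_{s,t_0}$ (the identity extends from $\UD$ to $\UDc$ since, by CNT1, all the maps involved lie in $\dAlg$), so that for every $z\in\UDc$
\[
\varphi_{s,t}(z)-\varphi_{s,t_0}(z)=\varphi_{t_0,t}\big(\varphi_{s,t_0}(z)\big)-\varphi_{s,t_0}(z),
\]
whence $\|\varphi_{s,t}-\varphi_{s,t_0}\|_{\dAlg}\le\|\varphi_{t_0,t}-\id\|_{\dAlg}$; for $t<t_0$ the same computation with $\varphi_{s,t_0}=\varphi_{t,t_0}\circ\varphi_{s,t}$ gives $\|\varphi_{s,t_0}-\varphi_{s,t}\|_{\dAlg}\le\|\varphi_{t,t_0}-\id\|_{\dAlg}$. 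Thus continuity of $t\mapsto\varphi_{s,t}$ at $t_0$ (from both sides, and in fact uniformly over all admissible $s\le t_0$) follows once I show $\|\varphi_{t_0,t}-\id\|_{\dAlg}\to0$ as $t\to t_0^+$ and $\|\varphi_{t,t_0}-\id\|_{\dAlg}\to0$ as $t\to t_0^-$. Finally, since $\varphi_{u,v}-\id$ is holomorphic in $\UD$ and continuous on $\UDc$, the maximum principle yields $\|\varphi_{u,v}-\id\|_{\dAlg}=\max_{\zeta\in\UC}|\varphi_{u,v}(\zeta)-\zeta|$, so it is enough to prove uniform convergence to the identity on the circle~$\UC$.

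Next I would record two facts about the maps near the diagonal. By CNT2 together with EF1, pointwise on $\UD$ one has $\varphi_{t_0,t}(z)\to z$ as $t\to t_0^+$ and $\varphi_{t,t_0}(z)\to z$ as $t\to t_0^-$; since all these maps send $\UD$ into $\UD$, the family is normal, and Vitali's theorem upgrades pointwise convergence to locally uniform convergence $\varphi_{u,v}\to\id$ in $\UD$. Moreover each $\varphi_{u,v}\in\Hol(\UD,\UD)$ is, by the Schwarz\,--\,Pick inequality, a normal function with one and the same normality constant, and the hypothesis that none of the $\varphi_{u,v}$ is constant keeps these maps open and non-degenerate. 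I am therefore reduced to the following purely boundary assertion: if $(f_n)\subset\dAlg\cap\Hol(\UD,\UD)$ is a uniformly normal sequence converging to $\id$ locally uniformly in $\UD$, then $f_n\to\id$ uniformly on $\UDc$.

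The heart of the matter is this last implication, and here I would invoke an extension to sequences of the No-Koebe-arcs theorem, whose single-function form is \cite[Theorem~9.2]{Pommerenke}. I would argue by contradiction: if the conclusion failed there would be $\varepsilon>0$ and points $\zeta_n\in\UC$ with $\zeta_n\to\zeta_0\in\UC$ and $|f_n(\zeta_n)-\zeta_n|\ge\varepsilon$. Looking at the images $f_n\big([r\zeta_n,\zeta_n]\big)$ of the outer radial segments, local uniform convergence pins their inner endpoints near $\zeta_0$, while their outer endpoints $f_n(\zeta_n)$ stay a definite distance from $\zeta_0$; in other words the source arcs cluster at $\zeta_0$ but their images refuse to shrink. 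This is exactly a Koebe-arc configuration, which the extended theorem forbids for a locally uniformly convergent sequence of uniformly normal maps whose limit ($\id$ here) is continuous on $\UDc$ at $\zeta_0$: it forces $f_n(\zeta_n)\to\zeta_0$, contradicting $|f_n(\zeta_n)-\zeta_n|\ge\varepsilon$ together with $\zeta_n\to\zeta_0$. This gives uniform convergence on $\UC$, hence on $\UDc$, and closes the argument.

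The main obstacle is precisely the extension of the No-Koebe-arcs theorem from a single function to such a sequence. In the classical statement one feeds in crosscuts $C_n\to\zeta_0$ with $\diam_{\ComplexE}f(C_n)\to0$ and concludes that the image of the cut-off subdomain has diameter tending to $0$; but in our situation the relevant crosscuts unavoidably touch $\UC$, where locally uniform interior convergence supplies no information, so the hypothesis $\diam_{\ComplexE}f_n(C_n)\to0$ cannot be checked directly. The content of the extension is to replace this hypothesis by the combination of (i) locally uniform convergence of the sequence on the part of $C_n$ lying in a fixed compact subset of $\UD$ and (ii) the uniform Schwarz\,--\,Pick normality bound, which controls $f_n$ on the part of $C_n$ approaching the circle; making this work uniformly in $n$ and rigorously passing to the diagonal limit $n\to\infty$, $\zeta\to\zeta_0$ is the delicate technical point of the proof.
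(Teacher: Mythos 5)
Your reduction via EF2 is algebraically correct, but the ``purely boundary assertion'' you reduce everything to is \emph{false}, so the final step cannot be repaired by any extension of the No-Koebe-arcs Theorem. Concretely, let $f_n$ be the Riemann map of $\UD$ onto $\Omega_n:=\UD\setminus S_n$, normalized by $f_n(0)=0$, $f_n'(0)>0$, where $S_n$ is the union of the segment $[1-1/n,1]$ and the spiral $\{(1-\tfrac1n e^{-\psi})e^{i\psi}:0\le\psi\le 2\pi n\}$. Each $\partial\Omega_n$ is a finite union of arcs, hence locally connected, so $f_n\in\dAlg$; since $S_n\subset\{1-1/n\le|z|<1\}$, the domains $\Omega_n$ converge to $\UD$ in the kernel sense, so $f_n\to\id$ locally uniformly in $\UD$, and the $f_n$ are self-maps of $\UD$, hence uniformly normal. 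Yet $f_n\not\to\id$ uniformly on $\UDc$: the boundary curve $\theta\mapsto f_n(e^{i\theta})$ traverses the spiral out to its tip and back, so a continuous lift of its argument sweeps an interval of length at least $2\pi n$, whereas if $\sup_{\UC}|f_n-\id|$ were small, then (using $|f_n|\ge 1-1/n$ on $\UC$) the lifted argument of $f_n(e^{i\theta})$ would differ from $\theta$ by a function confined to a single interval $2\pi k+[-\varepsilon',\varepsilon']$, forcing its range to have length at most $2\pi+2\varepsilon'$ --- a contradiction. The conceptual error is the direction of your ``Koebe-arc configuration'': you produce \emph{small} source arcs $[r\zeta_n,\zeta_n]$ whose images fail to shrink, i.e.\ a failure of equicontinuity, while the No-Koebe-arcs Theorem, also in its sequence form \cite[Theorem~9.2]{Pommerenke}, forbids the opposite configuration: arcs of \emph{non-shrinking} diameter whose images shrink (for uniformly normal sequences not subconverging to a constant). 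Your item (ii) --- that the Schwarz--Pick bound controls $f_n$ on the part of the arcs approaching $\UC$ --- is exactly what is unavailable: $|f_n'(z)|\le(1-|z|^2)^{-1}$ gives no Euclidean control near the circle, and the example above shows that no such control can exist.

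The paper's proof succeeds precisely by keeping the evolution-family structure at the point where your reduction discards it. Assuming equicontinuity of $(\varphi_{s,t_n})$ on $\UDc$ fails, one gets arcs $\gamma_n$ with $\diam_\Complex(\gamma_n)\to0$ but $\diam_\Complex\big(C_n\big)>\varepsilon$, where $C_n:=\varphi_{s,t_n}(\gamma_n)$; then one composes \emph{forward} to a fixed time $T>\sup_n t_n$: by EF2, $\varphi_{t_n,T}(C_n)=\varphi_{s,T}(\gamma_n)$, and this has diameter tending to $0$ because $\varphi_{s,T}$ is a single fixed element of $\dAlg$. This converts the failure of equicontinuity into a genuine Koebe-arc configuration for the \emph{later} maps $\varphi_{t_n,T}$ (large arcs, shrinking images), to which \cite[Theorem~9.2]{Pommerenke} applies, yielding a subsequence of $(\varphi_{t_n,T})$ converging to a constant; by CNT2 every such limit must be $\varphi_{t_0,T}$, contradicting the hypothesis that no element of the family is constant. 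Note finally that your reduced statements are not a simplification: the right-limit one is the special case $s=t_0$ of the proposition itself, while the left-limit one, $\|\varphi_{t,t_0}-\id\|_{\dAlg}\to0$ as $t\uparrow t_0$, is a continuity statement in the \emph{first} index --- exactly the regime where, as the paper's closing remark on Kufarev's example shows, sup-norm continuity of evolution families can genuinely fail --- so it would in any case require an argument that exploits the compositional structure, not a general normal-families lemma.
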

\begin{proof}
Let us fix $s\ge0$. Consider any convergent sequence $(t_n)\subset[s,+\infty)$
and denote by $t_0$ the limit of~$(t_n)$. We have to prove that
$\|\varphi_{s,t_n}-\varphi_{s,t_0}\|_{\dAlg}\to0$ as $n\to+\infty$. Owing to
the Arzel\`a\,--\,Ascoli Theorem, it follows from condition~CNT2 that we only
have to show that the sequence $(\varphi_{s,t_n})_{n\in\Natural}$ is
equicontinuous on~$\UDc$. Suppose it is not the case. Then one can find a
sequence of Jordan arcs $(\gamma_n)$ lying in $\UD$ such that
$\diam_\Complex(\gamma_n)\to0$ as $n\to+\infty$, but
$\diam_\Complex(\varphi_{s,t_n}(\gamma_n))>\varepsilon$ for all $n\in\Natural$
and some $\varepsilon>0$ not depending on $n$.

Choose $T>0$ such that $t_n<T$ for all $n\in\Natural$. Then
\begin{equation}\label{EF2c}
\varphi_{t_n,T}\circ\varphi_{s,t_n}=\varphi_{s,T},\quad n\in\Natural.
\end{equation}
Denote  $C_n:=\varphi_{s,t_n}(\gamma_n)$.  By~\eqref{EF2c}, we have
$\diam_\Complex(\varphi_{t_n,T}(C_n))=\diam_\Complex(\varphi_{s,T}(\gamma_n))$. Since
$\varphi_{s,T}\in\dAlg$, the latter quantity tends to~$0$ as $n\to+\infty$. At the same time
$\diam_\Complex(C_n)>\varepsilon$ for each $n\in\Natural$. By the Schwarz\,--\,Pick  theorem,
\begin{equation}\label{SchP}
|\varphi'_{t_n,T}(z)|\le \frac{1}{1-|z|^2},\quad z\in\UD, n\in\Natural.
\end{equation}
It follows now from~\cite[Theorem~9.2, p.\,265]{Pommerenke} that the sequence
$\varphi_{t_n,T}$ has a subsequence converging to a constant. This fact contradicts the
hypothesis and thus completes the proof.
\end{proof}

\begin{remark}
Note that the hypothesis of the above proposition does not imply the continuity
of $[0,t]\ni s\mapsto\varphi_{s,t}\in\dAlg$. Indeed, the well-known in Loewner
Theory example constructed by Kufarev~\cite{Kufarev} (see also
\cite[p.\,43]{Aleksandrov}) reveals an evolution family
$(\varphi_{s,t})\subset\dAlg$ that fails to be continuous in $s$ at $s:=0$
w.r.t. the norm $\|\cdot\|_{\dAlg}$. In this example, for fixed $t>0$ and for
each $s\in(0,t)$ the function $\varphi_{s,t}$ maps~$\UD$ onto $\UD$ minus the
slit along a part $\Gamma_{s,t}$ of a hyperbolic geodesic $\Gamma_t$, while the
mapping $\varphi_{0,t}$ maps $\UD$ onto the connected component of
$\UD\setminus\Gamma_t$ that contains the origin. Since
$\varphi_{s,t}(\UDc)=\UDc\neq\varphi_{0,t}(\UDc)$ for all $s\in(0,t)$, the norm
$\|\varphi_{s,t}-\varphi_{0,t}\|_{\dAlg}$ does not tend to zero as $s\to+0$.
\end{remark}

\subsection*{Acknowledgement}
The author would like to thank Prof.~Filippo Bracci and Prof. Manuel
D.~Contreras for important references and fruitful discussions on the topic of
this paper. The author is also grateful to Dr.~Tiziano Casavecchia for pointing
out the question on the existence of the unrestricted derivative.

\end{document}